\documentclass[11pt,reqno]{amsart}
\usepackage[T1]{fontenc}
\usepackage[utf8]{inputenc}
\usepackage[a4paper,margin=3cm]{geometry}
\usepackage{mathtools, amssymb, amsthm, mathrsfs, xcolor, float}
\usepackage[shortlabels]{enumitem}
\usepackage{microtype}
\usepackage[colorlinks,
breaklinks,
linkcolor=red!80!black,
anchorcolor=green,
citecolor=blue!60!black, 
]{hyperref}

\definecolor{bleurr}{RGB}{36,82,135}
\def\bleurr{\color{bleurr}}

\usepackage{etoolbox}
\patchcmd{\section}{\normalfont}{\normalfont \bleurr}{}{}
\patchcmd{\subsection}{\normalfont}{\normalfont \bleurr}{}{}
\patchcmd{\subsubsection}{\normalfont}{\normalfont \bleurr}{}{}

\newtheorem{THEalpha}{Theorem}%[section]
\newtheorem{The}{Theorem}[section]
\newtheorem{Pro}[The]{Proposition}
\newtheorem{Lem}[The]{Lemma}

\theoremstyle{definition}
\newtheorem{defn}{Definition}[section]
\newtheorem{Rem}{Remark}[section]

\newcommand{\T}{\mathbb T}
\newcommand{\R}{\mathbb R}
\newcommand{\Z}{\mathbb{Z}}

\newcommand{\HH}{\mathbf{H}}
\newcommand{\vep}{\varepsilon}

\newcommand{\Lip}{\mathrm{Lip}}
\newcommand{\cM}{\mathcal{M}}
\newcommand{\cH}{\mathcal{H}}
\newcommand{\cL}{\mathcal{L}}

\newcommand{\sym}{\mathcal{S}}

\newcommand{\calE}{\mathcal{E}}

\newcommand{\tr}{\mathrm{tr}}
\newcommand{\trans}{{\mathrm{T}}}  % transpose
\newcommand{\diff}{\mathop{}\!\mathrm{d}}
\newcommand{\supp}{\operatorname{supp}}
\newcommand{\wee}{w^{\eta,\varepsilon}}
\newcommand{\wreg}{w_\eta^{\mathrm{reg}}}

\def\leq{\leqslant}
\def\le{\leqslant}
\def\geq{\geqslant}
\def\ge{\geqslant}
\def\tilde{\widetilde}
\def\hat{\widehat}
\title[]{A new selection problem for degenerate viscous Hamilton--Jacobi equations}

\author{Qinbo Chen}
\address[Qinbo Chen]{School of Mathematics, Nanjing University, Nanjing 210093, China}
\email{qinbochen@nju.edu.cn}

\author{Zhi-Xiang Zhu}
\address[Zhi-Xiang Zhu]{School of Mathematics, Nanjing University, Nanjing 210093, China}
\email{zhuzhixiang0928@gmail.com}

\subjclass[2020]{35B40, 37J51, 49L25}
\keywords{degenerate viscous Hamilton--Jacobi equations, selection problem, vanishing discount, potential perturbation, generalized Mather measures, nonlinear adjoint method}

\begin{document}

\begin{abstract}	
We study a selection problem for degenerate viscous Hamilton--Jacobi equations with convex Hamiltonians, in which the approximation procedure combines a nonlinear discounted approximation with a small potential perturbation. A key question is how their simultaneous effects influence the asymptotic selection of viscosity solutions of the associated ergodic problem. Based on the nonlinear adjoint method, we establish the uniform convergence of the approximating solutions to a distinguished solution of the ergodic problem and derive a formula for the selected limit in terms of generalized Mather measures and the potential. As an application, we show that this selection principle is sufficiently flexible to realize any prescribed solution of the ergodic problem, with an explicit convergence rate.
\end{abstract}

\maketitle

\section{Introduction}
This paper studies a new selection problem for degenerate viscous Hamilton--Jacobi equations. More precisely, we consider the ergodic problem
\begin{equation}\label{eq_intro2}\tag{E}
    H(x, Du)=\tr\big(A(x) D^2 u\big)+c_H \quad \text{in}~\T^n
\end{equation}
where the diffusion matrix $A(x)$ is nonnegative definite, $H:\T^n\times\R^n\to\R$ is a convex Hamiltonian, $\T^n=\R^n/\Z^n$ is the flat torus, and $c_H\in\R$ is the ergodic constant. Owing to the possible degeneracy of $A$, viscosity solutions of \eqref{eq_intro2} are in general not unique, even modulo additive constants. This non-uniqueness thus raises a natural and fundamental question: \emph{whether the solutions produced by a given approximation procedure converge to a unique limit and, if so, which solution of the ergodic problem \eqref{eq_intro2} is selected}.

In this paper, we investigate the asymptotic behavior, as $\lambda\to 0^+$, of solutions $u_\lambda$ to the perturbed equation
\begin{equation}\label{eq_intro0}\tag{E$_\lambda$}
    f(x,\lambda u_\lambda)+ H(x, Du_\lambda)+\lambda V(x)=\tr\big( A(x) D^{2}u_\lambda\big)+c_H \quad \text{in}~\T^n.
\end{equation}
Here the perturbation consists of two parts: a nonlinear discounted approximation $f(x,\lambda u_\lambda)$ and a small potential perturbation $\lambda V(x)$. The main issue is to understand how their simultaneous effects determine the asymptotic limit of $\{u_\lambda\}$. Observe that this framework also includes the classical discounted approximation as a special case, namely when $f(x,\lambda u_\lambda)=\lambda u_\lambda$ and $V\equiv0$.

When $V(x)\equiv 0$, the above problem reduces to the vanishing discount problem. The interest in this problem originates from homogenization theory, stochastic optimal control, and differential games, see \cite{Lions_Papanicolaou_Varadhan1987, Bardi_Capuzzo-Dolcetta1997,  Arisawa_Lions1998} and the references therein. Lions, Papanicolaou, and Varadhan \cite{Lions_Papanicolaou_Varadhan1987} proved the relative compactness of discounted solutions, which gives convergence along subsequences. The uniqueness of the limit, however, remained a subtle issue. Substantial progress was first made in the first-order setting (i.e., when $A(x)\equiv 0$), where the problem is closely connected with dynamical systems. Gomes \cite{Gomes2008} and Iturriaga and S\'anchez-Morgado \cite{Iturriaga_Sanchez-Morgado2011} obtained partial results on possible limits. A major breakthrough was then achieved by Davini, Fathi, Iturriaga, and Zavidovique \cite{DFIZ2016}, who used weak KAM theory \cite{Fathi_book} to establish convergence to a unique limit and characterize it in terms of Mather measures. In this sense, the vanishing discount limit selects a particular critical solution of the ergodic problem. In the second-order setting, Mitake and Tran \cite{Mitake_Tran2017} established convergence for degenerate viscous Hamilton–Jacobi equations with scalar diffusion $A(x)=a(x) I_n$ by using the nonlinear adjoint method \cite{Evans_adjoint2010, Tran_book2021}. Ishii, Mitake, and Tran \cite{Ishii_Mitake_Tran2017_1,Ishii_Mitake_Tran2017_2} later developed a duality approach applicable to fully nonlinear second-order equations. 

Since then, the selection problem for discounted approximations and its variants have become an active topic, with many subsequent developments in diverse settings, including nonlinear discounted HJ equations \cite{Gomes_Mitake_Tran2018, CCIZ2019, WYZ_2021, Zavidovique2022_degenerate, QC2023, Zhang2024limit, Chen_Fathi_Za_Zha2024, DNYZ2024, QC2024}, weakly coupled systems \cite{Davini_Zavidovique2017, Ishii2019vanishing, Ishii_Jin2020}, discrete problems \cite{DFIZ_Math_Z_2016, Zavidovique_book2025, Ni_Zavidovique25}, mean field games \cite{Cardaliaguet_Porretta_MFG19, Iturriaga_Mendico_Wang_Xu25}, noncompact domains \cite{Ishii_Siconolfi2020, Davini2022, Tu_Zhang25vanishing}, negative discounting \cite{Davini_Wang2020, WYZ_negative}, state-constraint equations \cite{SonTu2022, Tu_Zhang2024}, non-monotone vanishing discount limits \cite{DNYZ2024, Ni_Yan_Zavidovique26}, nonlocal HJ equations \cite{Davini_Ishii25}, and vanishing discount-viscosity limits \cite{Wang_Zhang25}.

In contrast, equation \eqref{eq_intro0} in the present paper contains an additional source of selection, namely the small potential perturbation $\lambda V(x)$. Although this term vanishes in the limiting equation, it affects the asymptotic properties of the solutions and leads to a new selection problem.

The \textbf{motivation} for introducing $\lambda V(x)$ is twofold. From a physical and dynamical point of view, small potential perturbations are natural in the study of Hamiltonian systems and may affect their qualitative behavior. From the viewpoint of selection problems, the potential perturbation enlarges the class of accessible limit solutions beyond what can be achieved by discounted approximations alone. Indeed, the vanishing discount approximation selects a particular critical solution of the ergodic problem. Even if one allows more general discount structures (see e.g. \cite{WYZ_2021, QC2023, Chen_Fathi_Za_Zha2024}), the resulting selection mechanism is, in general, still not flexible enough to recover an arbitrary prescribed solution of the ergodic problem.

To illustrate this limitation concretely, consider $A(x)\equiv 0$ and a mechanical Hamiltonian on $\T=\R/\Z$ given by
\begin{equation*}
    H(x,p)=\frac{1}{2}p^2+\cos4\pi x.
\end{equation*}
The associated dynamics has two ergodic Mather measures, supported at the hyperbolic equilibria $(0,0)$ and $(\frac{1}{2},0)$. By \cite[Thm 1.5]{Chen_Fathi_Za_Zha2024}, the selected limit in this example is independent of the choice of the discount structure. This shows that varying the discount structure alone is not sufficient to select an arbitrary prescribed solution of the ergodic problem.

Therefore, introducing the potential perturbation is not merely a technical modification, but is a crucial ingredient in obtaining a more flexible selection principle. The first-order analogue of this new selection problem, corresponding to $A(x)\equiv0$, was studied by the first author in \cite{QC2024} using dynamical methods. The present paper addresses the second-order case, where the presence of a general, possibly degenerate diffusion matrix $A(x)$ requires different analytic tools.

\subsection{Main Results}
Throughout this paper, we impose the following assumptions.

\begin{itemize}
    \item[\textbf{(H1)}] The Hamiltonian $H\in C^{2}(\T^n\times\R^n)$ is strictly convex in $p$ and is superlinear in $p$. Moreover, there exists a positive constant $\gamma_{1}>0$ such that 
        \[
        |D_{x}H(x,p)|\leq \gamma_{1}\,\big(1+|H(x,p)|\big).
        \]
    \item[\textbf{(H2)}] The diffusion matrix $A(x)=\bigl(a_{ij}(x)\bigr)_{n\times n}\in C^2(\T^n)$ is positive semidefinite.
    \item[\textbf{(H3)}] The perturbation terms satisfy $V\in \Lip(\T^n)$ and $f(x,r)\in C^{1}(\T^n\times\R )$ with $\partial_{r}f>0$.  We also assume, without loss of generality, that  $f(x,0)\equiv 0$ on $\T^n$.
    \end{itemize}

\smallskip

Typical examples of $f$ satisfying (H3) include $f(x,r)=r$, corresponding to the standard discounted approximation, and $f(x,r)=\sigma(x)r$ with $\sigma\in C^1(\T^n)$ and $\sigma>0$, corresponding to a spatially varying discount factor. 
\begin{Rem}
In assumption (H3), the normalization $f(x,0)\equiv 0$ is imposed only for simplicity.  This is natural since, as $\lambda\to 0$, \eqref{eq_intro0} is intended as a perturbation of the ergodic problem \eqref{eq_intro2}. Formally, one may reduce to this case by replacing $f(x,r)$ with $f(x,r)-f(x,0)$ and $H(x,p)$ with $H(x,p)+f(x,0)$. We also remark that, without the monotonicity  condition $\partial_r f>0$, divergence phenomena may occur, see \cite{DNYZ2024}.

Under assumptions (H1)--(H2), the ergodic constant $c_H$ in \eqref{eq_intro2} is uniquely determined by $H$ and $A$. It is the only constant for which \eqref{eq_intro2} admits viscosity solutions. See e.g. \cite{Mitake_Tran2015, Cagnetti_Gomes_Mitake_Tran2015}.
\end{Rem}

Our first main result is the following.
\begin{THEalpha}\label{mainresult1}
Assume that {\rm (H1)--(H3)} hold.  Then there exists $\lambda_0>0$ such that, for every $\lambda\in(0,\lambda_0]$, the perturbed equation
\begin{equation}\label{eq_intro1}\tag{E$_\lambda$}
    f(x,\lambda u_\lambda)+ H(x, Du_\lambda)+\lambda V(x)=\tr \big( A(x) D^{2}u_\lambda\big)+c_H \quad \text{in}~ \T^n
\end{equation}
admits a unique Lipschitz continuous viscosity solution $u_\lambda: \T^n\to \R$. Moreover, as $\lambda\to 0^+$, $u_\lambda$ converges uniformly to a particular solution $u_0: \T^n\to \R$ of the ergodic problem \eqref{eq_intro2}, which is characterized by
\[u_0(x) = \sup_{w\in \calE} w(x), \quad x\in \T^n\] 
where $\calE$ denotes the collection of all continuous viscosity solutions $w$ of \eqref{eq_intro2} that satisfy the integral inequality
\begin{equation*}
\int_{\T^n\times\R^n} \partial_r f(x,0)\, w(x) \diff \tilde\mu \leq  -\int_{\T^n\times\R^n}V(x) \diff \tilde\mu\, \quad\text{for all}~\tilde\mu\in\tilde\cM_{\rm g},
\end{equation*}
and $\tilde\cM_{\rm g}$ is the collection of generalized Mather measures defined in \eqref{Mg_measures}. 
\end{THEalpha}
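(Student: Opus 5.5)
The strategy has three parts: existence, uniqueness and uniform estimates for $u_\lambda$; the nonlinear adjoint method (following Mitake--Tran) to build generalized Mather measures and test the limit against them; and the two inequalities $\limsup u_\lambda\le u_0$ and $\liminf u_\lambda\ge u_0$.

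\emph{Step 1: existence, uniqueness and estimates.} Uniqueness follows from the comparison principle, since $\partial_r f>0$ makes \eqref{eq_intro1} strictly monotone in the zeroth-order term. For existence I would use Perron's method with constant sub/supersolutions $\pm C/\lambda$. To choose them, write $H(x,p)-c_H$ at $p=0$. By continuity, $\partial_r f\ge\theta>0$ on $\T^n\times[-R,R]$. Choosing $C$ large makes $f(x,\pm C)$ dominate $\|H(\cdot,0)-c_H\|_\infty+\lambda\|V\|_\infty$. The first delicate point is that $f$ need not be globally proper. To handle it, I would first prove $\|\lambda u_\lambda\|_\infty\le C_0$, comparing with $\pm(w+C\lambda^{-1}\cdot)$-type functions built from a fixed solution $w$ of \eqref{eq_intro2}. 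This gives $|f(x,\lambda u_\lambda)|=O(\lambda)$, hence $\|\lambda u_\lambda\|_\infty=O(\lambda)$, i.e. $\|u_\lambda - w\|_\infty$ is bounded. The restriction $\lambda\le\lambda_0$ enters here, to keep $\lambda u_\lambda$ in the region where $\partial_r f\ge\theta$. I would then obtain a uniform Lipschitz bound by the Bernstein method, using (H1) (the bound $|D_xH|\le\gamma_1(1+|H|)$) and $A\in C^2$. This gives equi-Lipschitz, equibounded $\{u_\lambda\}$ after the $O(1)$ bound. Hence subsequences converge uniformly. Dividing $f(x,\lambda u_\lambda)=\lambda\partial_rf(x,0)u_\lambda+o(\lambda)$ shows every limit solves \eqref{eq_intro2}.

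\emph{Step 2: generalized Mather measures via the adjoint method.} I would regularize \eqref{eq_intro1} by adding $\eta\Delta$ and get $u^\eta_\lambda$, then linearize to get the adjoint equation
\[
\partial_r f(x,\lambda u^\eta_\lambda)\,\sigma - \mathrm{div}(D_pH\,\sigma) - \partial_{ij}((a_{ij}+\eta\delta_{ij})\sigma)=\partial_rf(x_0,\lambda u_\lambda(x_0))\,\delta_{x_0}.
\]
The solution $\sigma\ge0$ has total mass comparable to $1/\lambda$. Normalizing, I would push forward by $p=Du^\eta_\lambda$ and let $\eta\to0$, then $\lambda\to0$. The limits are measures in $\tilde\cM_{\rm g}$, as in \eqref{Mg_measures}. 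Two facts are needed from this. (a) For every solution $w$ of \eqref{eq_intro2} (after the regularization/convexity argument with $L$), integrating the equation for $u_\lambda - w$ against $\sigma$ gives the representation
\[
u_\lambda(x_0)\ \ge\ w(x_0) -\frac{1}{\lambda}\int \big(\lambda\partial_rf(x,0) w + \lambda V\big)\,d\sigma + o(1).
\]
This uses the identity $\int (f(x,\lambda u)/\lambda)\,\sigma$ together with $H(x,Dw)\ge H(x,Du)+D_pH\cdot(Dw-Du)$. (b) Integrating \eqref{eq_intro1} against any $\tilde\mu\in\tilde\cM_{\rm g}$ and using the defining property $\int (L+c_H)\,d\tilde\mu=0$ together with convexity gives $\int (\partial_r f(x,0)u_\lambda+V)\,d\tilde\mu\le o(1)$.

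\emph{Step 3: identification of the limit.} Let $\bar u$ be any subsequential limit. By (b), $\bar u\in\calE$, so $\bar u\le u_0$. Conversely, for $w\in\calE$, apply (a) at an arbitrary $x_0$. The normalized measures $\lambda\sigma$ converge to some $\tilde\mu\in\tilde\cM_{\rm g}$ with weight $\partial_r f(x,0)$. The integral term is therefore $\ge 0$ in the limit by the constraint defining $\calE$, giving $\bar u(x_0)\ge w(x_0)$, hence $\bar u\ge u_0$. So the full family converges to $u_0$, and $u_0$ is a solution. The main obstacle is Step 2(a): making the duality rigorous in the degenerate case. This requires controlling the error terms $\eta\int|D^2u^\eta_\lambda|^2\sigma$-type quantities and the passage $\eta\to0$ uniformly in $\lambda$. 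I would follow the Mitake--Tran estimates, adapting them to a general matrix $A$ via the $C^2$ regularity and a nonnegative-square-root argument for $A$.
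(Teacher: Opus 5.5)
Your architecture matches the paper's: barriers built from a solution of \eqref{eq_intro2} plus a Lipschitz bound, adjoint measures obtained by letting $\eta\to0$ and then $\lambda\to0$, an upper constraint placing every subsequential limit in $\calE$, and a lower bound $u(x_0)\ge w(x_0)$ for all $w\in\calE$.

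The gap is in Step~2. Both (a) and (b) pair functions that are only Lipschitz viscosity solutions ($w$ in (a), $u_\lambda$ in (b)) with the adjoint densities or with $\tilde\mu$. But \eqref{distribu_sense} and the holonomic constraint hold only for $C^2$ test functions.

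In (b) you cannot literally ``integrate \eqref{eq_intro1} against $\tilde\mu$''. The measure $\tilde\mu$ may be singular, concentrated where $Du_\lambda$ and $D^2u_\lambda$ have no pointwise meaning, so the viscosity inequality cannot be tested against it directly.

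In (a), the convexity step $H(x,Dw)\ge H(x,Du^\eta_\lambda)+D_pH(x,Du^\eta_\lambda)\cdot D(w-u^\eta_\lambda)$ followed by \eqref{distribu_sense} needs a smooth $w_\eta$ with $\|w_\eta-w\|_\infty\to0$. This $w_\eta$ must be an approximate subsolution of the \emph{viscous} problem, $-\eta^2\Delta w_\eta-\tr(AD^2w_\eta)+H(x,Dw_\eta)\le c_H+o(1)$. Plain mollification does not give this, because of the commutator $\tr\big((A(x)-A(x-y))D^2w(x-y)\big)$, where $D^2w$ is only a distribution. The Mitake--Tran commutation lemma controls this for $A=a(x)I_n$, but no analogue is known for a general matrix. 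So ``adapting it via a square root of $A$'' is exactly the unresolved point, not a routine step.

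Conversely, you need no bound on $\eta\int|D^2u^\eta_\lambda|^2\sigma^\eta_\lambda$ and no uniformity in $\lambda$ of the limit $\eta\to0$. Indeed, $u^\eta_\lambda$ is already $C^2$, $\eta\to0$ is taken at fixed $\lambda$, and the whole difficulty sits on the side of $w$.

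The paper supplies the missing ingredient in Theorem~\ref{The_approx_sub}. It takes the inf-sup convolution $(w^{\eta+\varepsilon})_\varepsilon$ with $\varepsilon=K\eta^3$. This is $C^{1,1}$ with $-\eta^{-1}I_n\preceq D^2(w^{\eta+\varepsilon})_\varepsilon\preceq\varepsilon^{-1}I_n$.
\begin{itemize}
\item At points where it coincides with the sup-convolution $w^\eta$, the subsolution property of $w^\eta$ (error $O(\eta)$) and $-\eta^2\Delta w^\eta\le n\eta$ give the inequality.
\item Elsewhere $\varepsilon^{-1}$ is an eigenvalue of the Hessian, so $\eta^2\Delta(w^{\eta+\varepsilon})_\varepsilon\ge\eta^2/\varepsilon-(n-1)\eta$ dominates all other terms.
\item It then mollifies at scale $\eta^4$, uses Jensen for $H$, and bounds the commutator by $\Lip(A)\,\eta^4(\eta^{-1}+\varepsilon^{-1})=O(\eta)$.
\end{itemize}

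The resulting inequality still contains $-\eta^2\Delta w_\eta$, which may be of size $\eta^{-1}$. So (b) cannot be read off from the defining properties of $\tilde\mu$ either. Instead you integrate against the approximating measures $\tilde\nu^{\eta_k}_{\lambda_l}$ with the same $\eta_k$, so that the adjoint identity absorbs $\eta_k^2\Delta$ exactly, and only then let $k\to\infty$ and $l\to\infty$ (Lemma~\ref{new_gen_1}). This is also why the constraint in $\calE$ is imposed only over $\tilde\cM_{\rm g}$. For (b) alone, testing with the smooth $u^{\eta_k}_\lambda$ itself also works.

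Finally, fix your adjoint equation. Its zeroth-order coefficient must be $\lambda\,\partial_rf(x,\lambda u^\eta_\lambda)$, the linearization of $f(x,\lambda u)$. With source $\lambda\delta_{x_0}$ the density then has mass $O(1)$, which is the normalization your formula in (a) tacitly uses.
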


The characterization of $u_0$ as the supremum over a constrained class of solutions makes the role of the potential $V$ explicit in the selection mechanism.
\begin{Rem}
Generalized Mather measures were introduced in the study of the stochastic Mather problem \cite{Gomes2002_stochasticMather, Gomes2008} (see also \cite{Gomes_Mitake_Tran2022}). This notion can be viewed as a second-order analogue of the classical Mather measures in Aubry--Mather theory. In the present paper, the measures in $\tilde\cM_{\rm g}$ are constructed through an approximation procedure based on the nonlinear adjoint method. We refer to Subsection \ref{subsection_MatherMeasures} for the precise construction.
\end{Rem}

\begin{Rem}[Size of the potential perturbation]
We emphasize that the potential perturbation $\lambda V$ is chosen to have the same order as the leading term of the discounted approximation. This scale is critical for the present selection problem, as will be seen from the proofs in Section \ref{sec_proofofmainresult}. Indeed, if $\lambda V$ is replaced by $\lambda^\alpha V$, then for $\alpha>1$ the perturbation is too small to affect the selection of the limit, while for $\alpha<1$ it may destroy the asymptotic convergence of the solutions. Thus $O(\lambda)$ is the critical, and in this sense optimal, scale at which the potential perturbation can affect the selected limit.
\end{Rem}

We next discuss two consequences of Theorem \ref{mainresult1}. The first is a comparison principle for solutions of the ergodic problem, which provides a uniqueness criterion in terms of generalized Mather measures.

\begin{THEalpha}\label{mainresult2}
Assume that {\rm (H1)--(H2)} hold. Let $\sigma\in C(\T^n)$ be positive. For any two continuous viscosity solutions $u_1$ and $u_2$ of the ergodic problem \eqref{eq_intro2}, if 
\begin{equation}\label{constraint0}
	\int_{\T^n\times\R^n} u_1\, \sigma\, \diff \tilde\mu\leq \int_{\T^n\times\R^n} u_2\, \sigma\, \diff \tilde\mu \quad\text{for all}~ \tilde\mu\in \tilde\cM,
\end{equation}
where $\tilde\cM$ is the set of all generalized Mather measures, 
then $u_1\leq u_2$ on $\T^n$.	
\end{THEalpha}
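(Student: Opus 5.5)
We deduce Theorem \ref{mainresult2} from Theorem \ref{mainresult1} by choosing the discount and the potential so that the selected limit is $u_2$. The difficulty is that Theorem \ref{mainresult1} needs $f\in C^1$, while $\sigma$ is only continuous; we handle this by a smoothing step at the end. First suppose $\sigma\in C^1(\T^n)$. Take $f(x,r)=\sigma(x)r$, so $\partial_r f(x,0)=\sigma$, and $V=-\sigma u_2$. To have $V$ Lipschitz we use the standard fact that continuous viscosity solutions of \eqref{eq_intro2} are Lipschitz, by superlinearity of $H$ and degenerate ellipticity. We will also use that $\tilde\cM_{\rm g}\subset\tilde\cM$ for this choice of $f$; this should follow from the construction in Subsection \ref{subsection_MatherMeasures}, since the limits produced by the adjoint method are generalized Mather measures.

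With these choices, Theorem \ref{mainresult1} gives $u_\lambda\to u_0=\sup_{w\in\calE}w$. The constraint defining $\calE$ becomes
\[
\int \sigma w\,\diff\tilde\mu\le\int\sigma u_2\,\diff\tilde\mu\quad\text{for all }\tilde\mu\in\tilde\cM_{\rm g}.
\]
By hypothesis \eqref{constraint0} and the inclusion $\tilde\cM_{\rm g}\subset\tilde\cM$, the solution $u_1$ satisfies this constraint, so $u_1\in\calE$ and hence $u_1\le u_0$.

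It remains to show $u_0\le u_2$. We plan to prove this directly from \eqref{eq_intro1} and the adjoint method, which is how the upper bound in Theorem \ref{mainresult1} should be proved anyway.
\begin{itemize}
\item For fixed $x_0$, let $\theta_\lambda$ be the adjoint measure for the linearization of \eqref{eq_intro1} at $u_\lambda$ (after the usual viscous regularization), with a Dirac source $\lambda\delta_{x_0}$ paired against the discount.
\item Use convexity of $H$ to compare $u_\lambda$ with $u_2$, which solves \eqref{eq_intro2}. Because $V=-\sigma u_2$, the potential term cancels the first-order part of $f(x,\lambda u_2)$, giving an inequality of the form
\[
u_\lambda(x_0)-u_2(x_0)\le \int\big(\sigma u_2+V\big)\,\diff\theta_\lambda+o(1)=o(1).
\]
\item Alternatively, one could check that $u_2\in\calE$ and then show $u_0$ is also bounded above by any element of $\calE$ whose constraint holds with equality. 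The direct estimate above seems cleaner.
\end{itemize}
Letting $\lambda\to0$ gives $u_0\le u_2$, and therefore $u_1\le u_2$.

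For general continuous $\sigma>0$, approximate by smooth $\sigma_k>0$ with $\sigma_k\to\sigma$ uniformly. The hypothesis \eqref{constraint0} holds only for $\sigma$ itself, so we introduce slack. Each $\tilde\mu\in\tilde\cM$ is a probability measure and $u_1,u_2$ are bounded, so for $\sigma_k$ the hypothesis holds up to an error $\vep_k\to0$. Replacing $u_2$ by $u_2+\vep_k'$ for a suitable constant $\vep_k'\to0$ (using $\sigma_k\ge c>0$) restores the exact inequality; constants are harmless since \eqref{eq_intro2} is invariant under adding constants. The smooth case then gives $u_1\le u_2+\vep_k'$, and letting $k\to\infty$ yields $u_1\le u_2$.

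\textbf{Main obstacle.} The hard step is the upper bound $u_0\le u_2$: controlling the second-order adjoint terms and passing $\theta_\lambda$ to a limit in $\tilde\cM$. This is essentially the core estimate behind Theorem \ref{mainresult1}, and we expect to reuse it rather than rebuild it.
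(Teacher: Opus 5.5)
\textbf{There is a genuine gap.} Your overall setup is sound and close to the paper's:
\begin{itemize}
\item take $f(x,r)=\sigma(x)r$ and $V=-\sigma u_2$;
\item use $\tilde\cM_{\rm g}\subset\tilde\cM$ (Proposition \ref{prop:minimizing}) to get $u_1\in\calE$, hence $u_1\le u_0$;
\item handle a merely continuous $\sigma$ by adding a small constant to $u_2$.
\end{itemize}
The gap is the step you yourself call the main obstacle, $u_0\le u_2$, and the argument you sketch for it fails.

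Linearizing at $u_\lambda$ (really at $u_\lambda^\eta$) and using convexity gives $H(x,Du_2)-H(x,Du_\lambda)\ge D_pH(x,Du_\lambda)\cdot D(u_2-u_\lambda)$. Paired with the adjoint identity, this yields $u_\lambda(x_0)\ge u_2(x_0)-\int(\sigma u_2+V)\,\diff\theta_\lambda-o(1)$. That is the \emph{lower} bound of Lemma \ref{new_gen_2}, not the upper bound you wrote. An upper bound would need the reverse convexity inequality, and it would also need smooth approximate \emph{super}solutions. The Jensen step in Theorem \ref{The_approx_sub} gives only subsolutions.

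It is also not true that the upper half of Theorem \ref{mainresult1} is proved this way. There, $\lim u_{\lambda_j}\le u_*$ follows from Lemma \ref{new_gen_1} and the definition of $u_*$ as a supremum. The adjoint method is used only for the reverse inequality.

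The missing observation makes this step trivial. With your choices, $f(x,\lambda u_2)+\lambda V=\lambda\sigma u_2-\lambda\sigma u_2\equiv 0$. So $u_2$ is itself a viscosity solution of \eqref{eq_intro1} for every $\lambda$. By uniqueness (comparison principle, Theorem \ref{The_equiestimates}), $u_\lambda\equiv u_2$, hence $u_0=u_2$ and $u_1\le u_0=u_2$.

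With this fix your proof is complete, and slightly cleaner than the paper's. The paper mollifies $\sigma$ and $\sigma u_2$ separately into $\sigma^\delta$ and $V^\delta$, so $u_2$ solves the perturbed equation only approximately. It then needs $u_2+D_\delta$ as a supersolution plus comparison to get $u_0^\delta\le u_2+D_\delta$, together with $u_1-C_\delta\in\calE_\delta$. Your version reduces to $\sigma\in C^1$ first, keeping $V=-\sigma u_2$ exact and Lipschitz, so no second slack constant is needed.
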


In particular, by taking $\sigma\equiv1$ in Theorem \ref{mainresult2}, we see that if two continuous viscosity solutions $u_1$ and $u_2$ of the ergodic problem \eqref{eq_intro2} satisfy 
	\begin{equation*}
	\int u_1 \diff \tilde\mu\leq \int u_2 \diff\tilde\mu \quad\text{for all}~ \tilde\mu\in \tilde\cM,
\end{equation*}
then $u_1\leq u_2$ on $\T^n$. Hence, a viscosity solution of equation \eqref{eq_intro2} is uniquely determined by its integrals against all generalized Mather measures. This recovers the uniqueness criterion established by Mitake and Tran \cite[Section 4]{Mitake_Tran_uniqueness_set}, with slightly weaker assumptions on $H$.

\smallskip

While Theorem \ref{mainresult1} describes the selected limit for a fixed potential $V$, the next result shows that the potential can serve as a control parameter: by varying $V$, the corresponding solutions $u_\lambda$ can be made to converge to any prescribed solution of the ergodic problem \eqref{eq_intro2}.

\begin{THEalpha}\label{mainresult3}
Assume that {\rm (H1)--(H2)} hold, and let $f\in C^{1,1}_{\rm loc}(\T^n\times\R)$ be a given function satisfying the condition imposed on $f$ in {\rm (H3)}. 

Then, for each viscosity solution $\hat{u}_0\in C(\T^n)$ of the ergodic problem \eqref{eq_intro2}, there exists a potential $\hat{V}_0\in \Lip(\T^n)$ such that, for all sufficiently small $\lambda>0$, the perturbed equation
\begin{equation}
    f(x,\lambda u_\lambda)+ H(x, Du_\lambda)+\lambda \hat{V}_0(x)=\tr\big( A(x) D^{2}u_\lambda\big)+c_H \quad \text{in}~\T^n,
\end{equation}
admits a unique viscosity solution $u_\lambda\in \Lip(\T^n)$, which converges uniformly to the prescribed solution $\hat{u}_0$ as $\lambda\to 0^+$, with convergence rate $O(\lambda)$, namely,  
\begin{equation*}
		\|u_\lambda-\hat{u}_0\|_{L^\infty(\T^n)}\,\leq \, C\lambda\,
\end{equation*}
for a constant $C$ independent of $\lambda$.
\end{THEalpha}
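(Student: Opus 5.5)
The natural choice is $\hat V_0 = -\partial_r f(x,0)\,\hat u_0 + c$ for a suitable constant $c$ (in particular $c=0$). Then $\lambda u_\lambda \approx \lambda \hat u_0$ and $f(x,\lambda u)\approx \lambda\,\partial_r f(x,0)\,u$, so the potential term cancels the linearized discount term at $u=\hat u_0$. To make this work I first need $\hat u_0$ to be Lipschitz, so that $\hat V_0\in\Lip(\T^n)$. Continuous viscosity solutions of \eqref{eq_intro2} under (H1)--(H2) are Lipschitz; I will take this from the standard gradient bounds (Bernstein-type estimates; cf. \cite{Mitake_Tran2015, Cagnetti_Gomes_Mitake_Tran2015}). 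I then set
\[
\hat V_0(x) := -\partial_r f(x,0)\,\hat u_0(x),
\]
which is Lipschitz because $f\in C^{1,1}_{\rm loc}$ gives $\partial_r f(\cdot,0)$ Lipschitz.

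Existence and uniqueness of $u_\lambda$ for small $\lambda$ come from the first part of Theorem \ref{mainresult1}. The rate $O(\lambda)$ I would get directly by comparison rather than through the limit characterization. Set $w^\pm := \hat u_0 \pm K$ with $K>0$ a constant to be chosen. Since $\hat u_0$ solves \eqref{eq_intro2}, plugging $w^\pm$ into \eqref{eq_intro1} reduces the residual to
\[
f\big(x,\lambda(\hat u_0\pm K)\big)-\lambda\,\partial_r f(x,0)\,\hat u_0 .
\]
Taylor's formula with the $C^{1,1}_{\rm loc}$ bound on $f$ (on the compact set $|r|\le \|\hat u_0\|_\infty+K+1$, with $f(x,0)=0$) turns this into
\[
\pm\lambda K\,\partial_r f(x,0) + O\big(\lambda^2(\|\hat u_0\|_\infty+K)^2\big).
\]
Let $m:=\min_x \partial_r f(x,0)>0$ and pick $K=1$. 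For $\lambda$ small, $\lambda m - C\lambda^2>0$, so $w^+$ is a strict supersolution and $w^-$ a strict subsolution of \eqref{eq_intro1}. The comparison principle for \eqref{eq_intro1}, which holds because $\partial_r f>0$ (the same monotonicity that gives uniqueness in Theorem \ref{mainresult1}), then yields $w^-\le u_\lambda\le w^+$. This gives $\|u_\lambda-\hat u_0\|_\infty\le 1$, which is not yet $O(\lambda)$.

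To get the $O(\lambda)$ rate I take $K=C_1\lambda$ instead. With this choice the residual for $w^+$ is
\[
\lambda^2 C_1\,\partial_r f(x,0) + O\big(\lambda^2\|\hat u_0\|_\infty^2\big) \ \ge\ \lambda^2\big(C_1 m - C_2\big),
\]
where the crucial point is that the $O(\lambda^2)$ Taylor remainder is controlled by the $C^{1,1}$ constant times $(\lambda|\hat u_0|+\lambda K)^2$. Choosing $C_1> C_2/m$ makes $w^+$ a supersolution, and symmetrically $w^-$ a subsolution. Comparison then gives $|u_\lambda-\hat u_0|\le C_1\lambda$.

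The main obstacle is justifying the comparison principle between the Lipschitz viscosity sub/supersolutions $w^\pm$ and $u_\lambda$ for the degenerate second-order equation with nonlinear zeroth-order term. The standard route is doubling variables with $A=\sigma\sigma^{\trans}$, using Lipschitz regularity of $\sigma$ from (H2) and strict monotonicity in $r$ on bounded sets. Since $u_\lambda$ and $\hat u_0$ are bounded and $\partial_r f\ge m'>0$ on compacts, this is the usual argument, and I expect it is already used for the uniqueness part of Theorem \ref{mainresult1}. Alternatively, uniqueness plus Theorem \ref{mainresult1} identifies the limit: the constraint becomes $\int \partial_r f(x,0)(w-\hat u_0)\diff\tilde\mu\le0$, and Theorem \ref{mainresult2} gives $w\le\hat u_0$, so $u_0=\hat u_0$. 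This is consistent with the comparison argument, but the rate itself comes from the explicit barriers.
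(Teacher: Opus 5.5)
Your proposal is correct and is essentially the paper's own proof: the same potential $\hat V_0=-\partial_r f(x,0)\,\hat u_0$, the same barriers $\hat u_0\pm M\lambda$ whose residual is $O(\lambda^2)$ by the $C^{1,1}_{\rm loc}$ bound on $f$, and then the comparison principle. Your Taylor remainder is bounded independently of $C_1$ once $C_1\lambda\le 1$, which is what makes the choice $C_1>C_2/m$ legitimate. Your closing aside via Theorem \ref{mainresult2} does not apply as stated, since membership in $\calE$ gives the integral inequality only on $\tilde\cM_{\rm g}$, not on all of $\tilde\cM$. The aside is unnecessary anyway, because the barriers alone already identify the limit.
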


\begin{Rem}
We stress that there may be many potentials for which the corresponding solutions $u_\lambda$ converge to the prescribed solution $\widehat u_0$ as $\lambda\to 0$. Our particular choice of potential $\widehat V_0$ in Theorem \ref{mainresult3} is made so as to yield the sharp convergence rate of order $O(\lambda)$.  In contrast, obtaining convergence rates for general cases remains a rather open problem. Some special examples were studied in \cite{Mitake_Soga2018}.
\end{Rem}

In conclusion, the preceding results establish a new selection principle for the ergodic problem \eqref{eq_intro2}, which is flexible enough to realize every viscosity solution as a selected limit.

\subsection{Outline of the Proof}
We briefly describe the proof strategy and the organization of the paper.

The first step is to establish \emph{a priori} estimates for solutions of equation \eqref{eq_intro0}. We first derive a Lipschitz estimate for a class of degenerate second-order Hamilton--Jacobi equations by the doubling variables method, see Theorem \ref{The_Bernst_Lip}. This applies to \eqref{eq_intro0} and yields uniform Lipschitz bounds for $u_\lambda$. Together with Perron's method and the comparison principle, it gives existence, uniqueness, and uniform estimates for the family $\{u_\lambda\}$, see Theorem \ref{The_equiestimates}.

The second step is to construct generalized Mather measures through an approximation scheme based on the nonlinear adjoint method. These measures play a central role in the selection problem. The nonlinear adjoint method was originally introduced by Evans \cite{Evans_adjoint2010} (see also \cite{Tran_adjoint2011}). It was applied in \cite{Mitake_Tran2017} to study the vanishing discount limit in the scalar diffusion case $A(x)=a(x)I_n$. See \cite{Zhang2024limit} for nonlinear discounted equations with the same scalar diffusion. In Section \ref{section_generalized_Mathermeasures}, we adapt this method to handle the general diffusion matrix $A(x)$ and the additional potential perturbation appearing in the present selection problem.

The third step is the asymptotic analysis of the family $\{u_\lambda\}$ using the generalized Mather measures constructed above. The crucial point is to establish two key estimates, Lemmas \ref{new_gen_1} and \ref{new_gen_2}, formulated in terms of these measures, which encode the limiting constraints that determine the selected solution. One of the main technical difficulties is the lack of smoothness of viscosity subsolutions. More precisely, it is unclear whether every Lipschitz viscosity subsolution of the ergodic problem \eqref{eq_intro2} can be approximated by smooth subsolutions. The commutation lemma of \cite{Mitake_Tran2017} is one technical device to handle this issue in the scalar diffusion case, but it is not known whether an analogous result holds for a general diffusion matrix. We therefore use a different approximation procedure: we establish a weaker smooth approximation result (Theorem \ref{The_approx_sub}) adapted to the present setting. It is based on the Lasry--Lions regularization combined with an additional mollification, in the spirit of \cite{Gomes_Mitake_Tran2022}. We refer to Section \ref{sec_smoothapproximation} for further details.

With these preparations, Theorem \ref{mainresult1} is proved in Section \ref{sec_proofofmainresult}. Theorem \ref{mainresult2} and Theorem \ref{mainresult3} are then derived as applications of Theorem  \ref{mainresult1}.

%%%%%%%%%%%%%%%%%%%%%%%%%%%%%%%%%%%%%%%%%%%%%%%%%%%%%%%%%%%%%%%%%%
\section{A priori Estimates and Well-posedness}\label{sec_uniformestimates}
In this section, we establish the basic a priori estimates and well-posedness results needed in the sequel. We begin with a Lipschitz estimate for a class of degenerate second-order Hamilton–Jacobi equations. We then apply this estimate to equation \eqref{eq_intro1} to obtain existence, uniqueness, and uniform estimates for $u_\lambda$.

\subsection{Lipschitz Estimates for Degenerate Second-Order Equations}
We give gradient bounds for viscosity solutions of the following class of second-order, possibly degenerate, Hamilton–Jacobi equations:
\begin{equation}\label{eq_chenz1}
    -\tr \bigl(A(x)D^2u\bigr)+f(x,\delta u)+H(x,Du)+\delta V(x)=0\qquad\text{in}~\T^n 
\end{equation}
where $\delta\geq 0$. 

Although this result will be applied later to equation \eqref{eq_intro1}, we state it in a slightly more general form. Throughout this subsection, we only need the assumptions listed below:
\begin{itemize}
    \item  $H\in C^{1}(\T^{n}\times\R^{n})$ is superlinear in $p$, and there is a constant $\gamma_1>0$ such that
          \begin{equation}\label{condi_D_xH}
          	|D_{x}H(x,p)|\leq \gamma_1\big(1+|H(x,p)|\big).
          \end{equation}
    \item  $f(x,r)\in C^{1}(\T^{n}\times\R)$ is strictly increasing in $r$. The potential $V\in \Lip(\T^n)$.
    \item $A(x)=\bigl(a_{ij}(x)\bigr)_{n\times n}$ is positive semidefinite for every $x$, and $A\in C^{2}(\T^n)$.
\end{itemize}

We emphasize that no convexity assumption on $H(x,p)$ is needed here. Since $A$ is of class $C^{2}$, it admits a Lipschitz square root \cite[Theorem 5.2.3]{Stroock_Varadhan_book}: there exists a Lipschitz map $\Sigma: \T^{n}\to\sym^{n}$, where $\sym^n$ denotes the space of $n\times n$ real symmetric matrices, such that $\Sigma(x)$ is positive semidefinite and
\begin{equation}\label{squareofA}
	A(x)=\Sigma(x)^{\trans}\,\Sigma(x)\quad \text{for every}~ x\in \T^n.
\end{equation}

\begin{The}\label{The_Bernst_Lip}
Suppose that the assumptions listed above hold.
Let $\delta\ge 0$ and let $u\in C(\T^n)$ be a viscosity solution of \eqref{eq_chenz1}. Then $u$ is Lipschitz continuous, and its gradient satisfies
\[
|Du(x)|\le K \quad\text{for almost every}~x\in\T^n.
\]
Here, the Lipschitz constant $K$ is given by
\[
K := e^{2\gamma_1}\bigl(\gamma_2+2\|f\|_{C^1(U)}\bigr)
     +\delta\bigl[e^{2\gamma_1}\|V\|_{L^\infty}+\Lip (V)\bigr]+C,
\]
where the constant $\gamma_2:=1+2\left|\min_{(x,p)\in \T^n\times\R^n} H(x,p)\right|$, the set $U:=\big\{(x,r)\in \T^n\times \mathbb R: |r|\le\delta\|u\|_{L^\infty}\big\}$, and 
the constant $C=C(\gamma_1,\Lambda_0)$ depends only on $\gamma_1$ and $\Lambda_0:=\bigl(\Lip(\Sigma)\bigr)^2+\sup_{x\in \T^n}\|\Sigma(x)\|^2_F$ with $\|\cdot\|_F$ being the Frobenius norm of a matrix. 
\end{The}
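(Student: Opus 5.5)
The plan is to use the doubling-of-variables method with a Lipschitz test function that grows with the distance. We argue by contradiction on a weighted modulus of $u$. For a large constant $L$ (to be chosen as $K$), we consider
\[
\Phi(x,y):=u(x)-u(y)-L\,\phi(|x-y|),
\]
on $\R^n\times\R^n$, with $u$ extended periodically and $\phi(s)$ concave, increasing, $\phi(s)\approx s$ near $0$. The natural choice is $\phi(s)=s-\kappa s^2$ on a small range, possibly capped. If $\max\Phi\le 0$ for this $L$, then $u$ is $L$-Lipschitz, and the bound $|Du|\le K$ a.e. follows from Rademacher's theorem. Suppose instead that $\max\Phi>0$, attained at $(\hat x,\hat y)$ with $\hat x\neq\hat y$ (by periodicity and boundedness we may restrict to $|x-y|\le \sqrt n$). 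Set $p:=L\phi'(|\hat x-\hat y|)\frac{\hat x-\hat y}{|\hat x-\hat y|}$. The Crandall--Ishii lemma then provides symmetric matrices $X,Y$ with $(p,X)\in\overline J^{2,+}u(\hat x)$ and $(p,Y)\in\overline J^{2,-}u(\hat y)$, together with the usual matrix inequality bounding $\begin{pmatrix}X&0\\0&-Y\end{pmatrix}$ by $L D^2\phi(|x-y|)$ plus a small error.

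Next we subtract the subsolution inequality at $\hat x$ from the supersolution inequality at $\hat y$:
\[
\tr(A(\hat x)X)-\tr(A(\hat y)Y)\ \ge\ f(\hat x,\delta u(\hat x))-f(\hat y,\delta u(\hat y))+H(\hat x,p)-H(\hat y,p)+\delta\bigl(V(\hat x)-V(\hat y)\bigr).
\]
The diffusion term is handled with the factorization \eqref{squareofA}. We test the matrix inequality against the vector pair $(\Sigma(\hat x)^\trans e_k,\Sigma(\hat y)^\trans e_k)$ and sum over $k$. This gives the bound $\tr(A(\hat x)X)-\tr(A(\hat y)Y)\le L\phi'(r)\,\Lip(\Sigma)^2\,r+L\phi''(r)\,(\text{something}\ge 0)$ with $r=|\hat x-\hat y|$. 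Because $\phi''<0$, we drop the last piece, so the diffusion contributes at most $C(\Lambda_0)L r$. This is where $\Lambda_0$ enters.

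For the remaining terms, since $u(\hat x)>u(\hat y)$ and $f$ is increasing, the $f$-difference is $\ge -\|D_xf\|_{L^\infty(U)}r$. The $V$-term is bounded below by $-\delta\Lip(V)r$. The $H$-difference is bounded below by $-\gamma_1(1+|H(\xi,p)|)r$, using \eqref{condi_D_xH} along the segment. Dividing by $r$ leaves an inequality of the form $\gamma_1(1+|H(\xi,p)|)\ge -C L+\dots$, which by itself is not coercive.

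\textbf{The main obstacle} is that the structure condition controls $D_xH$ only by $|H|$ and not by $|p|$. So we must first show that $H(\hat x,p)$ itself is bounded. The viscosity inequality at one point gives $H(\hat x,p)\le \tr(AX)+\|f\|+\delta\|V\|$, but $\tr(AX)$ alone is not controlled. To handle this, I would use the Gronwall-type integration that produces the $e^{2\gamma_1}$ factor. Along the segment from $\hat y$ to $\hat x$, $h(s)=1+|H(\xi_s,p)|$ satisfies $|h'|\le\gamma_1 h$, so $H(\xi,p)$ at every point of the segment is comparable up to the factor $e^{\gamma_1 r}$, with $r\le 2$ after restricting to a fundamental domain. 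I would also add a small multiple of the distance weight so that both points see the same $p$. Averaging the sub- and supersolution inequalities then yields
\[
|H(\hat x,p)|\le e^{2\gamma_1}\bigl(\gamma_2+2\|f\|_{C^1(U)}+\delta\|V\|_\infty\bigr)+C(\Lambda_0)L\,r.
\]
By superlinearity this bounds $|p|\approx L$ unless $L$ is below the stated $K$. To make the bookkeeping close, I will choose $\phi$ so that $r$ stays small, with localization radius depending on $\gamma_1,\Lambda_0$. Taking $L=K$ then contradicts $\max\Phi>0$. I expect tuning $\phi$ and the localization to absorb the $C(\Lambda_0)Lr$ term to be the most delicate step.
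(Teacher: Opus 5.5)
\textbf{There is a genuine gap.} It sits exactly at the step you flag as the main obstacle. The inequality
$|H(\hat x,p)|\le e^{2\gamma_1}(\dots)+C(\Lambda_0)Lr$
does not follow from ``averaging'' the two viscosity inequalities, and nothing else in your argument produces a coercive term in $H$.

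You test the Crandall--Ishii inequality against the symmetric pair $(\Sigma(\hat x)^\trans e_k,\Sigma(\hat y)^\trans e_k)$. The only information this gives is an upper bound on $\tr(A(\hat x)X)-\tr(A(\hat y)Y)$. In that combination $H$ enters only through $H(\hat x,p)-H(\hat y,p)$, which is $O\bigl(r(1+|H|)\bigr)$ and has no sign. Gronwall along the segment only says $H(\hat x,p)$ and $H(\hat y,p)$ are comparable, which is perfectly consistent with both being huge. Using the subsolution inequality at $\hat x$ alone would need an upper bound on $\tr(A(\hat x)X)$. The matrix inequality only gives $X\preceq \frac{L\phi'(r)}{r}I_n+O(\varepsilon)$, i.e.\ a bound of order $L/r$, which is useless as $r\to0$. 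So localizing to small $r$ makes matters worse, not better.

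Concavity of $\phi$ does not help either, since $A$ may vanish in the direction $\hat x-\hat y$ (you rightly drop that term). For the same reason your diagnosis is off: even if $|D_xH|\le C(1+|p|)$, the symmetric combination would yield no contradiction. (Also, for a test function of $x-y$ both points automatically see the same $p$, so no extra weight is needed for that.)

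\textbf{The missing idea is to break the symmetry between the two points.} The paper multiplies the matrix inequality by
$G_t=\bigl(t\Sigma(x_0)\;\;\Sigma(y_0)\bigr)^\trans\bigl(t\Sigma(x_0)\;\;\Sigma(y_0)\bigr)$
with $t^2=1+\sigma r>1$, $r=|x_0-y_0|$. This weights the subsolution inequality at $x_0$ by $t^2$ and the supersolution inequality at $y_0$ by $1$, giving
\begin{equation*}
\tr(G_tB)\;\ge\; t^2H(x_0,p)-H(y_0,p)+\dots\;\ge\;\bigl(t^2-e^{\gamma_1 r}\bigr)H(x_0,p)-\gamma_2\bigl(e^{\gamma_1 r}-1\bigr)+\dots .
\end{equation*}
This is exactly where the Gronwall estimate enters. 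Choosing $\sigma=\theta_0+1$ with $\theta_0=(e^{\gamma_1 r}-1)/r$ makes the coefficient of $H(x_0,p)$ equal to $r$, so you gain a term $r\,H(x_0,p)$. The cost of the asymmetry in the diffusion is
$\frac{L}{r}\|t\Sigma(x_0)-\Sigma(y_0)\|_F^2\le 2L\Lambda_0(1+\sigma^2)\,r$,
which is of the same order $r$. Dividing by $r$ gives
$H(x_0,Le_0)\le 4L\Lambda_0e^{4\gamma_1}+e^{2\gamma_1}\bigl(\gamma_2+2\|f\|_{C^1(U)}\bigr)+\delta\bigl[e^{2\gamma_1}\|V\|_{L^\infty}+\Lip(V)\bigr]$.
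Superlinearity with slope $4\Lambda_0e^{4\gamma_1}+1$ then forces $L\le K$. Your final superlinearity step would work once such an inequality is in hand, but without this $t^2$-weighting (or some equivalent symmetry-breaking device) your argument never produces it.
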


Our proof mainly follows the strategy of \cite{Holderestimates_for_degenerate_elliptic, Armstrong_Tran2015}. We include the argument here for completeness, since in our setting the discount term  $f(x, \delta u)$ is nonlinear and the structural condition \eqref{condi_D_xH} on $H$ differs slightly from those in \cite{Holderestimates_for_degenerate_elliptic, Armstrong_Tran2015}.  In particular, under assumption \eqref{condi_D_xH}, Gronwall's inequality is needed in the proof of Lemma \ref{Lem_t2HfV}.

Before giving the proof, we fix the following notation. For symmetric matrices $A,B\in\sym^n$, we write $A\succ B$ (resp. $A\succeq B$) if $A-B$ is positive definite (resp. positive semidefinite). This is commonly referred to as the L\"owner partial order. 
\begin{proof}[Proof of Theorem \ref{The_Bernst_Lip}]
Our aim is to show that there exists a large constant $K>0$ so that
\[
u(x)-u(y)-K|x-y|\leq 0\qquad\text{for all }x,y\in\R^n.
\]
Here, $u:\T^n\to\R$ is regarded as a $\Z^n$-periodic function on $\R^n$. We argue by contradiction. Suppose that for some $L>0$
\begin{equation}\label{sup_forL}
\sup_{x,y\in\R^n}\bigl(u(x)-u(y)-L|x-y|\bigr) > 0.
\end{equation}
We will then show that this forces $L$ to satisfy an upper bound, and hence $L$ cannot be arbitrarily large.

By the $\Z^n$-periodicity of $u$, the sup can be attained at points $x_0, y_0\in \overline{B}_1$\, (the unit ball),
\begin{equation}\label{contr_assup}
u(x_0)-u(y_0)-\Phi_L(x_0,y_0)=\sup_{x,y\in\R^n}\bigl(u(x)-u(y)-\Phi_L(x,y)\bigr)>0,
\end{equation}
where
\[\Phi_L(x,y):=L|x-y|.\] Inequality \eqref{contr_assup} forces $x_0\neq y_0$. Hence $\Phi_L$ is $C^2$ in a neighborhood of $(x_0,y_0)$. By the Crandall-Ishii Lemma, see \cite[Theorem 3.2]{Userguide92}, there exist matrices $X_\varepsilon,Y_\varepsilon\in\sym^n$  for every $\varepsilon>0$ such that
\[
(D_x\Phi_L(x_0,y_0),X_\varepsilon)\in\overline{J}^{2,+}u(x_0)\,,\qquad
(-D_y\Phi_L(x_0,y_0),Y_\varepsilon)\in\overline{J}^{2,-}u(y_0)\,,
\]
for which the following matrix inequality holds
\begin{equation}\label{matr_upb}
\begin{pmatrix}
X_\varepsilon & 0 \\
0 & -Y_\varepsilon
\end{pmatrix}\preceq B+\varepsilon B^2,
\quad\text{where }B:=D^2\Phi_L(x_0,y_0)\in\sym^{2n}.
\end{equation}
Here, $\overline{J}^{2,+}u(x)$ (resp. $\overline{J}^{2,-}u(x)$) denotes the closure of the second‑order superjet (resp. subjet) of $u$ at $x$, see \cite{Userguide92}. Moreover, since $x_0\neq y_0$, we have the explicit formulas
\begin{equation}\label{djkq0}
D_x\Phi_L(x_0,y_0)=-D_y\Phi_L(x_0,y_0)=L\mathbf{e}_0\,,\qquad
B=\frac{L}{|x_0-y_0|}\begin{pmatrix}
Z & -Z \\ -Z & Z
\end{pmatrix},
\end{equation}
where $\displaystyle{\mathbf{e}_0=\frac{x_0-y_0}{|x_0-y_0|}}$ is a column vector with $|\mathbf{e}_0|=1$, and
\begin{equation}\label{formu_Z}
	Z=I_n-\mathbf{e}_0\otimes\mathbf{e}_0=I_n-\mathbf{e}_0\mathbf{e}_0^{\trans}.
\end{equation}

Since $u$ is a viscosity solution of \eqref{eq_chenz1}, we have
\begin{equation}\label{sa_w0}
\begin{aligned}
-\tr\bigl(A(x_0)X_\varepsilon\bigr)+f(x_0,\delta u(x_0))+H(x_0,L\mathbf{e}_0)+\delta V(x_0) &\le 0,\\[2pt]
-\tr\bigl(A(y_0)Y_\varepsilon\bigr)+f(y_0,\delta u(y_0))+H(y_0,L\mathbf{e}_0)+\delta V(y_0) &\ge 0.
\end{aligned}
\end{equation}
Recall that $A(x)=\Sigma(x)^{\trans}\Sigma(x)$ by \eqref{squareofA}. For any $t>1$, we define 
\[
G_t:=\begin{pmatrix}t\Sigma(x_0) & \Sigma(y_0)\end{pmatrix}^\trans
      \begin{pmatrix}t\Sigma(x_0) & \Sigma(y_0)\end{pmatrix}
    =\begin{pmatrix}
t^2\Sigma(x_0)^\trans\Sigma(x_0) & t\Sigma(x_0)^\trans\Sigma(y_0)\\
t\Sigma(y_0)^\trans\Sigma(x_0) & \Sigma(y_0)^\trans\Sigma(y_0)
\end{pmatrix}\,
\]
which is a positive semidefinite matrix. Multiplying \eqref{matr_upb} by $G_t$ and taking the trace yields
\[
t^2\tr\bigl(A(x_0)X_\varepsilon\bigr)-\tr\bigl(A(y_0)Y_\varepsilon\bigr)
\le \tr(G_tB)+\varepsilon\tr(G_tB^2).
\]
Combining this with \eqref{sa_w0}, we arrive at
\begin{align*}
\tr(G_tB)+\varepsilon\tr(G_tB^2)
&\ge ~t^2H(x_0,L\mathbf{e}_0)-H(y_0,L\mathbf{e}_0)\\
&~\quad +t^2f(x_0,\delta u(x_0))-f(y_0,\delta u(y_0))
   +t^2\delta V(x_0)-\delta V(y_0).
\end{align*}
Letting $\varepsilon\to0^+$, we get the key inequality 
\begin{equation}\label{trGtB0}
\begin{aligned}
\tr(G_tB)& \ge ~ t^2H(x_0,L\mathbf{e}_0)-H(y_0,L\mathbf{e}_0)\\[2pt]
         &~\quad +t^2f(x_0,\delta u(x_0))-f(y_0,\delta u(y_0))+t^2\delta V(x_0)-\delta V(y_0).
\end{aligned}
\end{equation}

\,

We now estimate the two sides of \eqref{trGtB0} separately. The left-hand side is controlled by the following lemma.

\begin{Lem}\label{Lem_GtB_uppbd}
For any $t>1$,
\begin{equation}\label{trGtB_uppbd}
    \tr\bigl(G_t B\bigr) \le 2L\Lambda_0\left(\frac{(t^2-1)^2}{|x_0-y_0|}+|x_0-y_0|\right),
\end{equation}
where the constant $\Lambda_0:=\bigl(\Lip(\Sigma)\bigr)^2+\sup_{x\in \T^n}\|\Sigma(x)\|^2_F$.
\end{Lem}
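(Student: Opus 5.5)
The plan is to compute $\tr(G_tB)$ exactly from the formula for $B$ in \eqref{djkq0}, show that it equals a nonnegative quadratic form in the matrix $t\Sigma(x_0)-\Sigma(y_0)$, and then bound that quadratic form using the Lipschitz continuity and boundedness of $\Sigma$.

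First I write $d:=|x_0-y_0|>0$. By \eqref{djkq0},
\[
\tr(G_tB)=\frac{L}{d}\Bigl[t^2\tr\bigl(\Sigma(x_0)^\trans\Sigma(x_0)Z\bigr)-t\tr\bigl(\Sigma(x_0)^\trans\Sigma(y_0)Z\bigr)-t\tr\bigl(\Sigma(y_0)^\trans\Sigma(x_0)Z\bigr)+\tr\bigl(\Sigma(y_0)^\trans\Sigma(y_0)Z\bigr)\Bigr].
\]
Using cyclicity of the trace, $\tr(P^\trans Q Z)=\tr(QZP^\trans)$, and the symmetry of $Z$, the bracket collapses to $\tr\bigl(MZM^\trans\bigr)$ with $M:=t\Sigma(x_0)-\Sigma(y_0)$.

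Next I use that $Z=I_n-\mathbf{e}_0\mathbf{e}_0^\trans$ is an orthogonal projection, so $0\preceq Z\preceq I_n$. Hence $\tr(MZM^\trans)\le \tr(MM^\trans)=\|M\|_F^2$, and therefore $\tr(G_tB)\le \frac{L}{d}\|M\|_F^2$. I then split
\[
M=(t-1)\Sigma(x_0)+\bigl(\Sigma(x_0)-\Sigma(y_0)\bigr),
\]
and apply $\|P+Q\|_F^2\le 2\|P\|_F^2+2\|Q\|_F^2$. This gives
\[
\|M\|_F^2\le 2(t-1)^2\sup_{x}\|\Sigma(x)\|_F^2+2\bigl(\Lip(\Sigma)\bigr)^2d^2.
\]
Here $\Sigma$ is regarded as a $\Z^n$-periodic Lipschitz map on $\R^n$, so $\|\Sigma(x_0)-\Sigma(y_0)\|\le \Lip(\Sigma)\,|x_0-y_0|$ with the Euclidean distance of the lifted points. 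I take $\Lip(\Sigma)$ with respect to the Frobenius norm, which is the convention matching the definition of $\Lambda_0$.

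Finally, since $t>1$, we have $t^2-1=(t-1)(t+1)\ge 2(t-1)\ge t-1>0$, so $(t-1)^2\le (t^2-1)^2$. Both coefficients are bounded by $\Lambda_0$, which yields
\[
\tr(G_tB)\le 2L\Lambda_0\Bigl(\frac{(t^2-1)^2}{d}+d\Bigr),
\]
which is \eqref{trGtB_uppbd}.

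There is no real obstacle in this argument. The only points needing care are the algebraic identification of $\tr(G_tB)$ as $\frac{L}{d}\tr(MZM^\trans)$, including the sign of the off-diagonal blocks, and the consistent choice of the matrix norm used to define $\Lip(\Sigma)$.
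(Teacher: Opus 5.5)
Your proposal is correct and follows the paper's proof essentially step for step: you write $\tr(G_tB)=\frac{L}{|x_0-y_0|}\tr(MZM^\trans)$ with $M=t\Sigma(x_0)-\Sigma(y_0)$, bound it using $Z\preceq I_n$, split $M=(t-1)\Sigma(x_0)+(\Sigma(x_0)-\Sigma(y_0))$, and finish with $(t-1)^2\le (t^2-1)^2$. Your remarks that $\Lip(\Sigma)$ is taken in the Frobenius norm and that distances are between the lifted points make explicit two conventions the paper leaves implicit.
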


\begin{proof}
Recall from \eqref{formu_Z} that $Z=I_n-\mathbf{e}_0\otimes\mathbf{e}_0$, so $Z$ is an orthogonal projection, satisfying $Z=Z^2$ and $0\preceq Z\preceq I_n$.  Using the explicit form \eqref{djkq0} of $B$, we compute 
\begin{align*}
\tr(G_t B)
&=\frac{L}{|x_0-y_0|}\,\tr\left[\bigl(t\Sigma(x_0)-\Sigma(y_0)\bigr)^\trans
   \bigl(t\Sigma(x_0)-\Sigma(y_0)\bigr) \,Z\right] \\
&=\frac{L}{|x_0-y_0|}\,\tr\left[\bigl(t\Sigma(x_0)-\Sigma(y_0)\bigr) \,Z\,
   \bigl(t\Sigma(x_0)-\Sigma(y_0)\bigr)^\trans \right] \\
&\leq \frac{L}{|x_0-y_0|}\,\tr\left[\bigl(t\Sigma(x_0)-\Sigma(y_0)\bigr)\,
   \bigl(t\Sigma(x_0)-\Sigma(y_0)\bigr)^\trans \right] \\
&=\frac{L}{|x_0-y_0|}\,\left\|t\Sigma(x_0)-\Sigma(y_0)\right\|_F^2
\end{align*}
where $\|\cdot\|_F$ is the Frobenius norm for a matrix. 

Applying the inequality $\|a+b\|_F^2\le2(\|a\|_F^2+\|b\|_F^2)$ with $a=(t-1)\Sigma(x_0)$ and $b=\Sigma(x_0)-\Sigma(y_0)$, we obtain
\begin{align*}
\tr(G_t B)\leq \frac{2L}{|x_0-y_0|} \Bigl((t-1)^2\|\Sigma(x_0)\|_F^2+\|\Sigma(x_0)-\Sigma(y_0)\|_F^2\Bigr).
\end{align*}
Since $\Sigma$ is Lipschitz and $(t-1)^2\le(t^2-1)^2$ for $t>1$, the desired estimate \eqref{trGtB_uppbd} follows. 
\end{proof}

We next turn to the right-hand side of the key inequality \eqref{trGtB0}.

\begin{Lem}\label{Lem_t2HfV}
For every $t>1$, the following estimates hold:
\begin{align}
t^2H(x_0, L\mathbf{e}_0)-H(y_0, L\mathbf{e}_0)
&\ge ~\bigl(t^2-e^{\gamma_1|x_0-y_0|}\bigr)H(x_0, L\mathbf{e}_0)
    -\gamma_2\bigl(e^{\gamma_1|x_0-y_0|}-1\bigr)\,,\label{t2H_lowbd}
\end{align}
where the constant $\gamma_2:=1+2\left|\min_{(x,p)\in \T^n\times\R^n} H(x,p)\right|$.
\begin{align}
t^2f(x_0,\delta u(x_0))-f(y_0, \delta u(y_0))
&\ge ~-(t^2-1)\bigl\|f|_U\bigr\|_{L^\infty}
    -\bigl\|D_x f|_U\bigr\|_{L^\infty}\,|x_0-y_0|\,,\label{t2f_lowbd}\\[2mm] 
t^2\delta V(x_0)-\delta V(y_0)
&\ge ~ -(t^2-1)\delta\|V\|_{L^\infty}
    -\delta\,\Lip(V)\,|x_0-y_0|\,,\label{t2V_lowbd}
\end{align}
where $f|_U$ and $D_x f|_U$ denote the restrictions of $f$ and $D_x f$ to the set  
$U=\big\{(x,r)\in \T^n\times \R : |r|\le\delta\|u\|_{L^\infty}\big\}$, respectively.
\end{Lem}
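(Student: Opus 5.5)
The three inequalities are independent, and I will prove each by splitting the left-hand side as $(t^2-1)\cdot(\text{value at }x_0) + (\text{value at }x_0 - \text{value at }y_0)$. The first term is handled by a sign or size bound, the second by regularity in $x$. The estimates \eqref{t2f_lowbd} and \eqref{t2V_lowbd} are routine; the real work is \eqref{t2H_lowbd}, which requires a Gronwall argument because of the structural condition \eqref{condi_D_xH}.

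\textbf{The $f$ estimate.} First note that $|x_0|,|y_0|\le 1$, and the values $\delta u(x_0)$, $\delta u(y_0)$ lie in $[-\delta\|u\|_{L^\infty},\delta\|u\|_{L^\infty}]$. So every evaluation of $f$ below is at points of $U$. Write
\[
t^2 f(x_0,\delta u(x_0)) - f(y_0,\delta u(y_0)) = (t^2-1)f(x_0,\delta u(x_0)) + \bigl[f(x_0,\delta u(x_0))-f(y_0,\delta u(y_0))\bigr].
\]
The first term is at least $-(t^2-1)\|f|_U\|_{L^\infty}$. For the bracket, use the maximality of $(x_0,y_0)$ in \eqref{contr_assup}: it gives $u(x_0)>u(y_0)$. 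Since $f$ is increasing in $r$,
\[
f(x_0,\delta u(x_0))\ge f(x_0,\delta u(y_0)).
\]
Then $f(x_0,\delta u(y_0))-f(y_0,\delta u(y_0))\ge -\|D_xf|_U\|_{L^\infty}|x_0-y_0|$ by the mean value theorem in $x$ along the segment, with $r=\delta u(y_0)$ fixed. This step uses periodicity to view $f$ on $\R^n$.

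\textbf{The $V$ estimate.} The same splitting, with $\delta\ge0$, gives \eqref{t2V_lowbd} directly from $V(x_0)\ge -\|V\|_{L^\infty}$ and $V(x_0)-V(y_0)\ge-\Lip(V)|x_0-y_0|$.

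\textbf{The $H$ estimate.} Fix $p=L\mathbf{e}_0$ and set $x(s)=x_0+s(y_0-x_0)$ for $s\in[0,1]$. Let $m=\min H$, and define $\phi(s)=H(x(s),p)-m+1\ge1$, so that $|H|\le \phi+|m|$. By \eqref{condi_D_xH},
\[
|\phi'(s)|\le \gamma_1|x_0-y_0|\bigl(1+|H|\bigr)\le \gamma_1|x_0-y_0|\,(\phi+1+|m|)\le \gamma_1 |x_0-y_0|\,(\phi+\gamma_2/2 \cdot\text{const}).
\]
Rather than track this loosely, I will apply Gronwall to $\psi=\phi+c$ with $c$ chosen so that $1+|H|\le\psi$. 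Taking $\psi=H-m+\tfrac{\gamma_2}{2}+\tfrac12$, i.e. $\psi = H + \kappa$ with $\kappa$ of order $\gamma_2$, and adjusting constants to match $\gamma_2=1+2|m|$, gives $|\psi'|\le\gamma_1|x_0-y_0|\,\psi$. Gronwall then yields $\psi(1)\le e^{\gamma_1|x_0-y_0|}\psi(0)$, that is,
\[
H(y_0,p)\le e^{\gamma_1|x_0-y_0|}H(x_0,p)+\kappa\bigl(e^{\gamma_1|x_0-y_0|}-1\bigr).
\]
Subtracting this from $t^2H(x_0,p)$ gives \eqref{t2H_lowbd}, provided $\kappa\le\gamma_2$.

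I expect the main obstacle to be this choice of $\kappa$. It must satisfy both $1+|H|\le H+\kappa$ everywhere and $\kappa\le\gamma_2$. The choice $\kappa=1+2|m|=\gamma_2$ works in both cases: if $H\ge0$, then $1+|H|=1+H\le H+\kappa$; if $H<0$, then $|H|\le|m|$, so $1+|H|\le 1+|m|\le H+1+2|m|$ because $H\ge -|m|$. With $\kappa=\gamma_2$ the argument closes.
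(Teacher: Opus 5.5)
Your proof is correct and follows the paper's argument. For the $f$ and $V$ terms you use the same $(t^2-1)$-splitting; for $f$ you apply monotonicity in $r$ at $x_0$ before moving in $x$, whereas the paper moves in $x$ first at $r=\delta u(x_0)$, an immaterial reordering. For the $H$ term you use the same Gronwall argument: your shift $\psi=H+\gamma_2$, together with the check $1+|H|\le H+\gamma_2$, is exactly the paper's bound $|h|\le h+2|\min H|$. The hedged intermediate display with ``const'' could simply be deleted, since the choice $\kappa=\gamma_2$ closes the argument cleanly.
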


\begin{proof}
Define $h(\tau)=H\big(x_0+\tau(y_0-x_0), L\mathbf{e}_0\big)$ for $\tau\in[0,1]$.  Using condition \eqref{condi_D_xH}, namely, $|D_{x}H(x,p)|\leq \gamma_1\big(1+|H(x,p)|\big)$, we obtain the differential inequality  
\[h'(\tau)\le \gamma_1\Bigl(1+|h (\tau)|\Bigr)|x_0-y_0|.\] 
By the superlinearity of $H$, $H$ is bounded from below, i.e., $\min H(x,p)>-\infty$. This implies $|h(\tau)|\leq h(\tau)+2\left|\min H(x,p)\right|$, and therefore,
\begin{equation}\label{diffineq_h}
h'(\tau)\le \gamma_1|x_0-y_0| \Bigl(\gamma_2+h (\tau)\Bigr),
\end{equation} 
where the constant $\gamma_2=1+2\left|\min H(x,p)\right|$. Applying Gronwall’s inequality to \eqref{diffineq_h} yields
\[
h(1)\le e^{\gamma_1|x_0-y_0|}h (0)+\gamma_2\bigl(e^{\gamma_1|x_0-y_0|}-1\bigr),
\] 
namely
\[
H(y_0, L\mathbf{e}_0)\le e^{\gamma_1|x_0-y_0|}H(x_0, L\mathbf{e}_0)
   +\gamma_2\bigl(e^{\gamma_1|x_0-y_0|}-1\bigr),
\]
which easily leads to \eqref{t2H_lowbd}.

To prove inequality \eqref{t2f_lowbd}, we write
\begin{align*}
t^2f(x_0,\delta u(x_0))-f(y_0,\delta u(y_0))  
&= (t^2-1)f(x_0,\delta u(x_0))
   +\bigl[f(x_0,\delta u(x_0))-f(y_0,\delta u(x_0))\bigr] \\[2pt]
&\qquad +\bigl[f(y_0,\delta u(x_0))-f(y_0,\delta u(y_0))\bigr].
\end{align*}
Note that $f(y_0,\delta u(x_0))-f(y_0,\delta u(y_0))$ is non‑negative, since $f$ is increasing in its second variable and $u(x_0)>u(y_0)$ by \eqref{contr_assup}. Hence,
\begin{align*}
t^2f(x_0,\delta u(x_0))-f(y_0,\delta u(y_0)) \ge -(t^2-1)\bigl\|f|_U\bigr\|_{L^\infty}
      -\bigl\|D_x f|_U\bigr\|_{L^\infty}\,|x_0-y_0|.
\end{align*}
where the set $U:=\big\{(x,r)\in \T^n\times \R : |r|\le\delta\|u\|_{L^\infty}\big\}$.

The last assertion \eqref{t2V_lowbd} follows directly from the Lipschitz continuity of $V$,
\begin{align*}
t^2\delta V(x_0)-\delta V(y_0)
&= (t^2-1)\delta V(x_0)+\delta\bigl(V(x_0)-V(y_0)\bigr) \\
&\ge -(t^2-1)\delta\|V\|_{L^\infty}
    -\delta\,\Lip(V)\,|x_0-y_0|. 
\end{align*}
\end{proof}

We are now ready to complete the proof of Theorem \ref{The_Bernst_Lip} by combining the previous estimates. Set
\[
t:=\sqrt{1+\sigma|x_0-y_0|},
\]
where $\sigma>0$ will be chosen later. 

Applying Lemmas \ref{Lem_GtB_uppbd} and \ref{Lem_t2HfV} to the key inequality \eqref{trGtB0} and dividing by $|x_0-y_0|$,
\begin{equation*}
2L\Lambda_0(1+\sigma^2)
\ge (\sigma-\theta_0)H(x_0,L\mathbf{e}_0)-\gamma_2\theta_0 -(\sigma+1)\|f\|_{C^1(U)}
   -\delta\bigl(\sigma\|V\|_{L^\infty}+\Lip(V)\bigr),
\end{equation*}
where $\|f\|_{C^1(U)}:=\|f|_U\|_{L^\infty}+\|D_x f|_U\|_{L^\infty}$ and the constant $\theta_0$ is 
\[
\theta_0:=\frac{e^{\gamma_1|x_0-y_0|}-1}{|x_0-y_0|}>0.
\]
Choose $\sigma:=\theta_0+1$. The above inequality then becomes
\begin{equation*}
4L \Lambda_0(1+\theta_0)^2
\ge H(x_0,L\mathbf{e}_0)-\gamma_2\theta_0 -(\theta_0+2)\|f\|_{C^1(U)}
   -\delta\bigl[ (\theta_0+1)\|V\|_{L^\infty}+\Lip(V)\bigr].
\end{equation*}
Note that $\theta_0$ satisfies
\[
\theta_0\le\frac{e^{2\gamma_1}-1}{2},
\]
since the function $s\mapsto(e^{\gamma_1 s}-1)/s$ is increasing on $[0,\infty)$ and $x_0,y_0\in \overline{B}_1$. Hence,
\begin{equation}\label{dkasd1}
e^{2\gamma_1}\bigl(\gamma_2+2\|f\|_{C^1(U)}\bigr)
+\delta\bigl[e^{2\gamma_1}\|V\|_{L^\infty}+\Lip(V)\bigr]
\ge H(x_0,L\mathbf{e}_0)-4L\Lambda_0 e^{4\gamma_1}.
\end{equation}

By the superlinearity of $H$, there exists a constant $C=C(\gamma_1,\Lambda_0)>0$ such that
\begin{equation}\label{H_suplin}
H(x,p)\ge\bigl(4\Lambda_0 e^{4\gamma_1}+1\bigr)|p|-C\qquad\text{for all }(x,p)\in\mathbb{T}^n\times\mathbb{R}^n.
\end{equation}
Combining the last two inequalities \eqref{dkasd1}-\eqref{H_suplin}, we conclude that 
\[
L\le e^{2\gamma_1}\bigl(\gamma_2+2\|f\|_{C^1(U)}\bigr)
      +\delta\bigl[e^{2\gamma_1}\|V\|_{L^\infty}+\Lip(V)\bigr]+C=:K.
\]
Thus the constant $L$ in \eqref{sup_forL} must satisfy $L\le K$. Consequently,
\[
u(x)-u(y)-K|x-y|\le 0\qquad\text{for all }x,y\in\mathbb{T}^n.
\]
This finally ends the proof of Theorem \ref{The_Bernst_Lip}.
\end{proof}

\,

\subsection{Well-posedness and Uniform Estimates for \texorpdfstring{\eqref{eq_intro1}}{}}
We use the Lipschitz estimate established above to prove existence and uniqueness of viscosity solutions to equation \eqref{eq_intro1}, together with uniform estimates that will be needed later in the asymptotic analysis.
 
\begin{The}\label{The_equiestimates}
Assume that {\rm (H1)--(H3)} hold. Then there exists $\lambda_0>0$ such that, for every $\lambda\in (0,\lambda_0]$, equation \eqref{eq_intro1}, namely
\begin{equation*}
    f(x,\lambda u_\lambda)+ H(x, Du_\lambda)+\lambda V(x)=\tr \bigl( A(x) D^2 u_\lambda\bigr)+c_H \qquad \text{in}~ \T^n
\end{equation*}
admits a unique continuous viscosity solution $u_\lambda: \T^n \to \R$.

Moreover, the family $\{u_\lambda\}_{\lambda \in (0,\lambda_0]}$ is equibounded and equi-Lipschitz on $\T^n$.
\end{The}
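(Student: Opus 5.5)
The plan is to fix a viscosity solution $w$ of the ergodic problem \eqref{eq_intro2}, which exists by the result recalled in the Remark after (H3). We may take $w$ Lipschitz by Theorem \ref{The_Bernst_Lip} with $\delta=0$. We then build explicit sub- and supersolutions of \eqref{eq_intro1} of the form $w\pm M$. After that we apply Perron's method and the comparison principle, and finally invoke Theorem \ref{The_Bernst_Lip} with $\delta=\lambda$ to obtain equi-Lipschitz bounds.

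\textbf{Barriers.} Since $\partial_r f>0$ and $f(x,0)=0$, compactness of $\T^n$ gives constants $0<m_1\le \partial_r f\le m_2$ on $\T^n\times[-R,R]$ for any fixed $R$. For a constant $M>0$, consider $\overline{w}:=w+M/\lambda$. It is a supersolution of \eqref{eq_intro1} provided
\[
f\bigl(x,\lambda w+M\bigr)+\lambda V\ge 0 .
\]
Suppose $\lambda\le\lambda_0$ with $\lambda_0\|w\|_\infty\le 1$. Take $R=M+1$, and fix $M$ such that $m_1(M-1)\ge\|V\|_\infty$; here $m_1$ depends on $R$, but since $f(x,\cdot)$ is increasing it suffices to choose $M=2$ and then shrink $\lambda_0$. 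Then $f(x,\lambda w+M)\ge f(x,M-1)\ge \min_x f(x,1)>0$, which dominates $\lambda_0\|V\|_\infty$ once $\lambda_0$ is small. The symmetric argument shows that $\underline{w}:=w-M/\lambda$ is a subsolution. So $\|\lambda u_\lambda\|_\infty$ is bounded by the barriers, but this is not yet a uniform bound on $u_\lambda$ itself.

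\textbf{Existence and uniqueness.} Comparison for \eqref{eq_intro1} between a Lipschitz subsolution and a supersolution follows from the strict monotonicity of $f$ in $r$, using the same doubling-variables argument with the Crandall--Ishii lemma and the matrix $G_1$ as in the proof of Theorem \ref{The_Bernst_Lip}. At a positive maximum of $u-v$, the properness term $f(x,\lambda u)-f(x,\lambda v)\ge \lambda m_1 (u-v)$ gives the contradiction. Here Lipschitz continuity of $H$ on bounded gradient sets is needed, and it is available because Theorem \ref{The_Bernst_Lip} makes solutions Lipschitz a priori. Perron's method between $\underline w$ and $\overline w$ then yields existence, and comparison yields uniqueness.

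\textbf{Equiboundedness: the main step.} One must show $\|u_\lambda\|_\infty\le C$, not merely $\|u_\lambda\|_\infty\le C/\lambda$. The plan is to sharpen the barriers to $w\pm C_0$ with $C_0$ independent of $\lambda$. Consider $w+C_0$ as a supersolution candidate. One needs
\[
f(x,\lambda(w+C_0))+\lambda V\ge 0 .
\]
If $C_0\ge \|w\|_\infty+1$, then $\lambda(w+C_0)\in[\lambda,\lambda(2C_0)]$. By the mean value theorem, $f(x,\lambda(w+C_0))\ge m_1\lambda(C_0-\|w\|_\infty)$, where $m_1=\min\partial_r f$ over $\T^n\times[-1,1]$ for $\lambda_0 C_0\le 1$. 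This is at least $\lambda\|V\|_\infty$ as soon as $C_0\ge \|w\|_\infty+\|V\|_\infty/m_1$. The subsolution $w-C_0$ is handled the same way. Comparison then gives $|u_\lambda-w|\le C_0$, so $\{u_\lambda\}$ is equibounded and $\lambda\|u_\lambda\|_\infty\le\lambda_0(\|w\|_\infty+C_0)$. Consequently the set $U$ in Theorem \ref{The_Bernst_Lip} with $\delta=\lambda$ lies in a fixed compact set. The constant $K$ there is therefore uniform in $\lambda\in(0,\lambda_0]$, which gives the equi-Lipschitz bound. The delicate point is only the consistent choice of $C_0$ before $\lambda_0$, so that the constant $m_1$ does not degenerate; this is resolved by fixing $C_0$ first and then choosing $\lambda_0\le 1/(\|w\|_\infty+C_0)$.
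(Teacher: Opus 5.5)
Your proposal follows the paper's proof: barriers built by shifting a fixed Lipschitz ergodic solution by $\lambda$-independent constants, Perron's method plus comparison for existence and uniqueness, and Theorem \ref{The_Bernst_Lip} with the now uniformly bounded set $U$ for the equi-Lipschitz bound. The differences are cosmetic: you check the barrier inequality with the mean value theorem and $\min_{\T^n\times[-1,1]}\partial_r f$, whereas the paper uses a Taylor expansion at $r=0$ with $d_0=\min_x\partial_r f(x,0)$, and your preliminary $w\pm M/\lambda$ barriers are unnecessary. One caveat, which the paper shares: the Perron existence step tacitly needs comparison between the semicontinuous envelopes, which your Lipschitz-based comparison sketch does not cover, although it does suffice for uniqueness and for the bound $|u_\lambda-w|\le C_0$.
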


\begin{proof}
By Theorem \ref{The_Bernst_Lip}, any continuous viscosity solution of \eqref{eq_intro1} is Lipschitz continuous. This, together with the comparison principle, ensures the uniqueness of the solution. 

To establish existence we use Perron's method \cite{Userguide92}. We first note that the ergodic problem 
\begin{equation}\label{eq_ergsecoHJ}
    H(x, Du)=\tr \bigl(A(x) D^2 u\bigr)+c_H \qquad \text{in}~ \T^n.
\end{equation}
admits Lipschitz viscosity solutions, so we fix one such solution $\hat{u}:\T^n\to\R$ of \eqref{eq_ergsecoHJ}. We then define two functions $\hat{u}_-$ and $\hat{u}_+$ on $\T^n$ by
\begin{equation*}
\hat{u}_-(x):=\hat{u}(x)-\max\hat{u}-\frac{1+\|V\|_\infty}{d_0},\qquad
\hat{u}_+(x):=\hat{u}(x)-\min\hat{u}+\frac{1+\|V\|_\infty}{d_0}.
\end{equation*}
where the constant $d_0:=\min_{x\in\T^n} \partial_r f(x,0)>0$.

 We claim that $\hat{u}_-$ and $\hat{u}_+$ are, respectively, a viscosity subsolution and a supersolution of equation \eqref{eq_intro1} for all sufficiently small $\lambda>0$. It suffices to verify the subsolution property for $\hat{u}_-$, since the argument for $\hat{u}_+$ is analogous.   Notice that $\hat{u}_-$ differs from $\hat{u}$ only by a constant, so their derivatives coincide. Therefore, it remains to check that, for all sufficiently small $\lambda>0$,
\begin{equation}\label{checjk1}
	f(x,\lambda \hat{u}_-(x))+\lambda V(x)\leq 0\qquad \text{in}~\T^n.
\end{equation}
Recalling that $\partial_rf>0$ and $f(x,0)\equiv 0$ by assumption (H3), Taylor's formula yields
\begin{align*}
	 f(x,\lambda \hat{u}_-(x))+\lambda V(x)
	\leq & ~f\left(x, -\lambda\frac{1+\|V\|_\infty}{d_0} \right)+\lambda V(x)\\
	= & ~-\lambda \,\partial_r f(x,0)\,\frac{1+\|V\|_\infty}{d_0}+\lambda \,\omega(\lambda)+\lambda V(x)\\
	\leq &~-\lambda+\lambda\,\omega(\lambda)\,,
\end{align*}
for some modulus $\omega: [0,\infty)\to [0,\infty)$. Therefore, there exists $\lambda_0>0$ such that \eqref{checjk1} holds for all $\lambda\in(0,\lambda_0]$. This proves the claim.

Now, for each $\lambda\in(0,\lambda_0]$, invoking Perron's method we define
\begin{equation*}
u_\lambda(x) \overset{\Delta}{=} \sup\{ w(x) \mid \hat{u}_- \leq w \leq \hat{u}_+,\,  w~\text{is a continuous viscosity subsolution of \eqref{eq_intro1}} \}\,
\end{equation*}
Then $u_\lambda:\T^n\to\R$ is a continuous viscosity solution of equation \eqref{eq_intro1}. 

Finally, the equiboundedness of the family $\{u_\lambda\}_{\lambda \in (0,\lambda_0]}$ follows directly from the construction above,  while the equi-Lipschitz continuity follows from Theorem \ref{The_Bernst_Lip} together with the equiboundedness. 
\end{proof}
\begin{Rem}
We stress that Theorem \ref{The_equiestimates} may not hold for large $\lambda>\lambda_0$. See \cite[Section 3]{QC2024} for a counterexample.
\end{Rem}

%%%%%%%%%%%%%%%%%%%%%%%%%%%%%%%%%%%%%%%%%%%%%%%%%%%%%%%%%%%%%%%%%%
\section{Regularization Process and Construction of Generalized Mather Measures}\label{section_generalized_Mathermeasures}
In this section, we first regularize equation \eqref{eq_intro1} by the vanishing viscosity method in order to obtain smooth approximate solutions. We then introduce the adjoint equation associated with the regularized problem, and use the nonlinear adjoint method to construct a special class of probability measures that enter the selection formula in Theorem \ref{mainresult1}.

\subsection{Elliptic Regularization}
Solutions $u_\lambda$ of \eqref{eq_intro1} are generally only Lipschitz continuous, while the nonlinear adjoint method requires sufficient regularity. To overcome this, we  therefore introduce an elliptic regularization by adding a small viscosity term. This yields a family of uniformly elliptic equations whose solutions enjoy higher regularity and approximate $u_\lambda$ as the viscosity parameter tends to zero. This procedure is also known as the vanishing viscosity method.

More precisely, for each $\eta>0$, we consider the following elliptic regularization of \eqref{eq_intro1}:
\begin{equation}\label{eq_regularized}\tag{$\rm E^\eta_\lambda$}
    f(x,\lambda u^\eta_\lambda) + H(x, Du^\eta_\lambda) + \lambda V(x) 
    = \eta^{2}\Delta u_\lambda^{\eta} + \tr \bigl(A(x) D^{2}u^\eta_\lambda\bigr) + c_H \qquad \text{in}~ \T ^n.
\end{equation}
Then we have the following lemma.
\begin{Lem}\label{Lem_exst_ulameta}
There exist $\lambda_0>0$ and $\eta_0>0$ such that, for every  $\lambda\in (0,\lambda_0]$ and $\eta\in (0,\eta_0]$, equation \eqref{eq_regularized} has a unique $C^2$ solution $u^\eta_\lambda:\T ^n\to\R$. 

Moreover, $\lambda\|u_\lambda^\eta\|_{L^\infty(\T^n)}$ and $\|Du_\lambda^\eta\|_{L^\infty(\T^n)}$ are bounded uniformly in $\lambda\in (0,\lambda_0]$ and $\eta\in (0,\eta_0]$.
\end{Lem}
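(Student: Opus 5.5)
The plan is to follow the scheme of Theorem \ref{The_equiestimates}, with two changes: use sub/supersolutions adapted to the uniformly elliptic operator, and replace viscosity Perron theory by classical elliptic theory.

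\textbf{Step 1: barriers and comparison.} The barriers cannot be built from $\hat u$, since $\hat u$ is only Lipschitz and does not solve the $\eta$-equation. Instead we take constant barriers. Put $M:=\max_x |H(x,0)-c_H|+\|V\|_\infty+1$ and $d_0:=\min_x\partial_r f(x,0)$. For the constant $c_-=-M/(\lambda d_0)$ we have
\[
f(x,\lambda c_-)+H(x,0)+\lambda V-c_H\le f(x,-M/d_0)+M-1 .
\]
This suggests a different normalization: choose the constant $r_-<0$ with $f(x,r_-)\le -M$ for all $x$, and set $c_-:=r_-/\lambda$. Then $c_-$ is a classical subsolution, as long as $\lambda\le\lambda_0$ keeps $\lambda V$ inside $M$. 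Since $f$ is only strictly increasing, such an $r_-$ exists provided $\inf_x f(x,r)\to-\infty$. If it does not, I would fall back on the barriers $\hat u_\pm$ of Theorem \ref{The_equiestimates}, mollified and corrected by a constant of order $1/\lambda$; this is the step I would try if the constant-barrier route fails.

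Comparison for \eqref{eq_regularized} between classical (or viscosity) sub- and supersolutions holds because $\partial_r f>0$: at a maximum of $w-v$ the gradients agree, the second-order terms have the right sign, and strict monotonicity in $r$ then gives $w\le v$. This yields uniqueness and bounds $\lambda u^\eta_\lambda$ in $[r_-,r_+]$, uniformly in $\eta$ and $\lambda$.

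\textbf{Step 2: existence of a $C^2$ solution.} With the ellipticity constant $\eta^2>0$, I would use either Perron's method followed by regularity, or a continuity/Leray--Schauder argument. The Lipschitz estimate of Theorem \ref{The_Bernst_Lip} applies directly to \eqref{eq_regularized}, with $A$ replaced by $A+\eta^2 I$ (still $C^2$ and positive semidefinite) and $\delta=\lambda$. Its square root has $\Lambda_0$ bounded uniformly for $\eta\le\eta_0$. The bound $\|f\|_{C^1(U)}$ is uniform because $\lambda\|u\|_\infty\le\max(|r_-|,r_+)$.

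This gives the uniform gradient bound. Once $|Du|\le K$, the Hamiltonian is bounded, so the equation is a uniformly elliptic linear equation in $u$ with bounded right-hand side. $W^{2,p}$ estimates then give $C^{1,\alpha}$. Since $H\in C^2$ and $f\in C^1$, the right-hand side becomes $C^\alpha$, and Schauder estimates give $C^{2,\alpha}$. For existence I would use Leray--Schauder: the map $v\mapsto u$ solving the linear problem
\[
-\eta^2\Delta u-\tr(AD^2u)+u=v-f(x,\lambda v)-H(x,Dv)-\lambda V+c_H
\]
is compact on $C^{1,\alpha}$, and the a priori bounds above (uniform along the homotopy parameter) supply the fixed point.

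\textbf{Main obstacle.} The genuinely delicate point is obtaining the $L^\infty$ barrier for $\lambda u$ uniformly without assuming growth of $f$. If $f$ is bounded in $r$, constant barriers may fail for large data. In that case I would use $\hat u^\vep_\pm$: mollify $\hat u$, note that $\hat u^\vep$ is an approximate subsolution up to an error $o(1)$ plus $\eta^2\Delta\hat u^\vep\le C\eta^2/\vep$, choose $\vep$ and then $\eta_0$ small, and absorb the error using $d_0$ and a shift of order $1/\lambda$, exactly as in \eqref{checjk1}. This explains why both $\lambda_0$ and $\eta_0$ appear in the statement.
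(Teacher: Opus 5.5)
Your comparison argument, the gradient bound via Theorem~\ref{The_Bernst_Lip} applied with $A+\eta^2 I_n$, and the regularity bootstrap are all fine. The gap is the one you flag yourself: the uniform $L^\infty$ barriers. Neither of your two routes closes it under (H1)--(H3).

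\textbf{Constant barriers.} These need some $r_+$ with $f(x,r_+)\ge c_H-H(x,0)-\lambda V(x)$ for all $x$, but (H3) allows bounded $f$. Take $n=1$, $A\equiv 0$, $H(x,p)=\frac12p^2+\cos 2\pi x$ (so $c_H=1$), and $f(x,r)=\epsilon_0\arctan r$ with $\epsilon_0\pi/2<2$. Then for small $\lambda$ no constant is a supersolution, since the condition fails at $x=\frac12$.

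\textbf{Mollified $\hat u$, supersolution side.} The fallback fails in the same example. The viscosity solution $\hat u$ has a concave kink at $x=\frac12$ with one-sided derivatives $\pm 2$, and it is symmetric about $\frac12$. Hence $(\hat u*\theta_\vep)'(\frac12)=0$ and $H(\frac12,(\hat u*\theta_\vep)'(\frac12))=-1=c_H-2$. For a convex Hamiltonian, Jensen goes the wrong way, so mollifying a solution does not in general give an approximate \emph{super}solution. With your ordering ($\vep$ fixed first, then $\eta\to 0$), the term $-\eta^2(\hat u*\theta_\vep)''=O(\eta^2/\vep)$ cannot compensate. An $O(1)$ defect therefore remains, and no shift of order $1/\lambda$ absorbs it when $\sup f<2$.

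\textbf{Mollified $\hat u$, subsolution side.} Here you claim that $\hat u*\theta_\vep$ is an $o(1)$-approximate subsolution of $-\tr(A(x)D^2w)+H(x,Dw)\le c_H$. This is precisely the smooth-approximation issue that Section~\ref{sec_smoothapproximation} describes as unresolved for a general diffusion matrix. After one integration by parts, the commutator $\tr(A(x)D^2(\hat u*\theta_\vep))-(\tr(AD^2\hat u))*\theta_\vep$ contains the term $(A(x)-A(x-y))\,D\theta_\vep(y)\,D\hat u(x-y)$. This is of size $O(\vep)\cdot O(\vep^{-1})$: bounded, but not small in sup norm. Your Leray--Schauder bounds along the homotopy also rest on these barriers, so the existence step is affected as well.

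\textbf{The missing idea.} Build the barriers from the uniformly elliptic ergodic problem itself. For each $\eta>0$ there is a constant $c_H^\eta$ and a $C^2$ solution $v^\eta$ of $-\eta^2\Delta v^\eta-\tr(AD^2v^\eta)+H(x,Dv^\eta)=c_H^\eta$. The $v^\eta$ are equi-Lipschitz, and $c_H^\eta\to c_H$. Normalize $\min v^\eta=0$, so that $\|v^\eta\|_\infty+\|Dv^\eta\|_\infty\le K_0$. Then $v^\eta\pm(K_0+1)/\lambda$ are exact classical super- and subsolutions of \eqref{eq_regularized}. The only defect is $c_H^\eta-c_H+\lambda V$, which for small $\lambda,\eta$ is absorbed by $\min_x f(x,1)>0$ and $\max_x f(x,-1)<0$. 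No growth of $f$ is needed, and this is exactly where $\eta_0$ enters.

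If you want to keep working with $\hat u$, you would need the Lasry--Lions regularization of Theorem~\ref{The_approx_sub}, with $\vep\ll\eta^2$ so that the added viscosity absorbs the kinks, plus a mirrored sup-inf version for the supersolution. That is much heavier. With the $v^\eta$ barriers, Perron's method together with your regularity argument and Theorem~\ref{The_Bernst_Lip} gives the lemma.
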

\begin{proof}
For every $\eta>0$, there exists a unique constant $c_H^\eta$ such that the ergodic problem
\begin{equation}\label{stateq1}	   
    -\eta^{2}\Delta v^\eta -\tr \bigl(A(x) D^2 v^\eta\bigr)+H(x, Dv^\eta) = c_H^\eta \qquad \text{in}~ \T ^n
\end{equation}
admits a solution $v^\eta\in C^2(\T^n)$, unique up to additive constants. By adding a constant if necessary, we may assume $\min_{x\in \T^n} v^\eta(x) = 0$. Moreover, it was shown in \cite{Gomes2002_stochasticMather, Cagnetti_Gomes_Mitake_Tran2015} that 
\begin{equation}\label{etaconv}
	\lim_{\eta\to 0^+}c_H^\eta=c_H,
\end{equation}
and $\{v^\eta\}_{\eta\in (0,1]}$ is equi-Lipschitz continuous. Hence, in view of $\min_{x\in \T^n} v^\eta(x) = 0$ and the compactness of $\T^n$, there exists  $K_0>0$ such that $\|v^\eta\|_\infty+\|Dv^\eta\|_\infty\leq K_0$ for all $\eta\in (0,1]$. 

We claim that the functions
\[
\overline u^\eta_\lambda:=v^\eta+\frac{K_0+1}{\lambda}
\qquad\text{and}\qquad
\underline u^\eta_\lambda:=v^\eta-\frac{K_0+1}{\lambda}
\]
are respectively a supersolution and a subsolution of \eqref{eq_regularized} for $\lambda\in(0,\lambda_0]$ and $\eta\in(0,\eta_0]$, provided that $\lambda_0, \eta_0$ are sufficiently small. To this end, define an elliptic operator $\mathcal{L}$ as
\begin{equation*}
	\mathcal{L}[w]:=  -\eta^{2}\Delta w -\tr \bigl(A(x) D^2w\bigr) +f(x,\lambda w) + H(x, Dw) +\lambda V. 
\end{equation*}
Since $D\overline u^\eta_\lambda=Dv^\eta$ and $D^2\overline u^\eta_\lambda=D^2 v^\eta$, it follows from \eqref{stateq1} that
\begin{align*}
	\mathcal{L}[\overline u^\eta_\lambda]-c_H= &~ f(x, \lambda v^\eta +K_0+1)+\lambda V+c_H^\eta-c_H\\
	\geq &~ f(x,1)+\lambda V+c_H^\eta-c_H=\int_0^1\partial_r f(x,r)\diff r+\lambda V+c_H^\eta-c_H
\end{align*}
where we used $\partial_rf>0$ and $f(x,0)\equiv 0$ from assumption {\rm (H3)}. Obviously, by \eqref{etaconv}
\begin{equation*}
	\lim_{\lambda\to 0, \eta\to 0}\mathcal{L}[\overline u^\eta_\lambda]-c_H>0
\end{equation*}
This means that there exist small constants $\lambda_0>0$ and $\eta_0>0$, such that $\mathcal{L}[\overline u^\eta_\lambda]\geq c_H$ for all $\lambda\in(0,\lambda_0]$ and $\eta\in(0,\eta_0]$. Similarly, we can also prove that $\mathcal{L}[\underline u^\eta_\lambda]\leq c_H$ for all $\lambda\in(0,\lambda_0]$ and $\eta\in(0,\eta_0]$. This verifies the claim.

Therefore, by Perron's method and the comparison principle, there exists a unique  solution $u_\lambda^\eta\in C^2(\T^n)$ to the regularized equation \eqref{eq_regularized} for every $\lambda\in(0,\lambda_0]$ and $\eta\in(0,\eta_0]$,
 and it satisfies 
$
\underline u^\eta_\lambda\le u^\eta_\lambda\le \overline u^\eta_\lambda.
$
Consequently, \[
\lambda \|u^\eta_\lambda\|_{L^\infty(\T^n)}
\le
\lambda \|v^\eta\|_{L^\infty(\T^n)}+K_0+1
\le
(\lambda_0+1) K_0+1
\]
uniformly for $\lambda\in(0,\lambda_0]$ and $\eta\in(0,\eta_0]$. Finally, the desired uniform gradient estimate for $\|Du_\lambda^\eta\|_{L^\infty(\T^n)}$ follows from applying Theorem \ref{The_Bernst_Lip} together with the uniform boundedness of $\lambda \|u^\eta_\lambda\|_{L^\infty(\T^n)}$.
\end{proof}

The regularized solution $u^\eta_\lambda$ of equation \eqref{eq_regularized} converges to the original solution $u_\lambda$ of \eqref{eq_intro1} as $\eta\to 0^+$. More precisely, the following lemma shows that $u^\eta_\lambda$ differs from  $u_\lambda$ by an amount proportional to $\eta/\lambda$.
\begin{Pro}[Convergence rate in the vanishing viscosity process]\label{Pro_convergerate}
Let $u_\lambda^\eta$ be the solution of \eqref{eq_regularized}, and let $u_\lambda$ be the solution of \eqref{eq_intro1}. Then
\[
\|u_\lambda^\eta-u_\lambda\|_{L^\infty(\T^n)}\le \frac{C\eta}{\lambda}
\]
for some constant $C>0$ independent of $\lambda\in(0,\lambda_0]$ and $\eta\in(0,\eta_0]$.
\end{Pro}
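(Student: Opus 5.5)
We will run a doubling-of-variables comparison between the viscosity solution $u_\lambda$ of \eqref{eq_intro1} and the $C^2$ solution $u^\eta_\lambda$ of \eqref{eq_regularized}. The viscosity term $\eta^2\Delta$ is treated as an error of size $\eta$ by penalizing with a quadratic of scale $\eta$. The rate $1/\lambda$ will come from the strict monotonicity of $f$: since $\lambda u_\lambda$ and $\lambda u^\eta_\lambda$ stay in a fixed bounded set (Theorem \ref{The_equiestimates} and Lemma \ref{Lem_exst_ulameta}), we have $\partial_r f\ge d_1>0$ there. Hence $f(x,\lambda a)-f(x,\lambda b)\ge d_1\lambda(a-b)$ whenever $a\ge b$ and both $\lambda a,\lambda b$ lie in this set. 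We also use that $u_\lambda$ and $u^\eta_\lambda$ are Lipschitz with a constant $K$ uniform in $\lambda,\eta$, by Theorem \ref{The_Bernst_Lip} and Lemma \ref{Lem_exst_ulameta}.

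To bound $u^\eta_\lambda-u_\lambda$ from above, set
\[
\Phi(x,y)=u^\eta_\lambda(x)-u_\lambda(y)-\frac{|x-y|^2}{2\eta}
\]
on $\R^n\times\R^n$ (periodic), and let $(x_0,y_0)$ be a maximum point. The Lipschitz bounds give $|x_0-y_0|\le 2K\eta$, so $p:=(x_0-y_0)/\eta$ satisfies $|p|\le 2K$. Since $u^\eta_\lambda$ is smooth, we apply the Crandall--Ishii lemma only to $u_\lambda$, which is a supersolution at $y_0$. This produces matrices $X,Y$ with $X=D^2u^\eta_\lambda(x_0)$, $(p,Y)\in\overline J^{2,-}u_\lambda(y_0)$, and
\[
\begin{pmatrix}X&0\\0&-Y\end{pmatrix}\preceq \frac{C}{\eta}\begin{pmatrix}I&-I\\-I&I\end{pmatrix}.
\]
Subtracting the two equations gives
\[
f(x_0,\lambda u^\eta_\lambda(x_0))-f(y_0,\lambda u_\lambda(y_0))\le H(y_0,p)-H(x_0,p)+\lambda(V(y_0)-V(x_0))+\eta^2\Delta u^\eta_\lambda(x_0)+\tr(A(x_0)X)-\tr(A(y_0)Y).
\]
Each term on the right is bounded as follows. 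The $H$-difference is at most $C|x_0-y_0|\le C\eta$, because $|p|\le 2K$ and $H\in C^1$. The $V$-difference is at most $C\lambda\eta$. For the diffusion terms, testing the matrix inequality with $\bigl(\Sigma(x_0)\ \Sigma(y_0)\bigr)^\trans\bigl(\Sigma(x_0)\ \Sigma(y_0)\bigr)$, exactly as in Lemma \ref{Lem_GtB_uppbd} with $t=1$, gives
\[
\tr(A(x_0)X)-\tr(A(y_0)Y)\le \frac{C}{\eta}\|\Sigma(x_0)-\Sigma(y_0)\|_F^2\le C\eta.
\]
The remaining term $\eta^2\Delta u^\eta_\lambda(x_0)$ is the delicate one. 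Rather than needing a bound on $D^2 u^\eta_\lambda$, we will double the variables also for the viscous part, viewing $\eta^2\Delta+\tr(AD^2)$ as $\tr(\tilde A D^2)$ with $\tilde\Sigma=(\Sigma,\eta I)$. The corresponding row $(\eta I,0)$ contributes to the trace with $t=1$ at most
\[
\frac{C}{\eta}\,\|\eta I-0\|_F^2=C\eta.
\]
Alternatively, one may regard $u_\lambda$ as a supersolution of the equation including the $\eta^2\Delta$ term up to error $C\eta$ in the same manner. We expect making this step rigorous to be the main technical point: carrying out the Crandall--Ishii argument with the asymmetric square roots $(\Sigma(x_0),\eta I)$ and $(\Sigma(y_0),0)$.

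Finally, on the left-hand side we write
\[
f(x_0,\lambda u^\eta_\lambda(x_0))-f(y_0,\lambda u_\lambda(y_0))\ge d_1\lambda\bigl(u^\eta_\lambda(x_0)-u_\lambda(y_0)\bigr)-C|x_0-y_0|,
\]
using the $x$-Lipschitz bound of $f$ on the bounded set. If $u^\eta_\lambda(x_0)\le u_\lambda(y_0)$ there is nothing to prove; otherwise the monotonicity bound applies. Combining everything yields $\lambda\max\Phi\le C\eta$. Since $u^\eta_\lambda(x)-u_\lambda(x)\le\max\Phi$ for every $x$, this gives $u^\eta_\lambda-u_\lambda\le C\eta/\lambda$. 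The reverse inequality follows by the symmetric argument, exchanging the roles of the two functions.
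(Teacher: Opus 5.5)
Your proposal is correct and follows essentially the same route as the paper: doubling variables with the penalty $\frac{\alpha}{2}|x-y|^2$ with $\alpha=\eta^{-1}$, applying the Crandall--Ishii lemma, tracing the matrix inequality against the positive semidefinite matrix built from $\Sigma(x_0),\Sigma(y_0)$, and using $\partial_r f\ge d_1>0$ on the bounded range of $\lambda u_\lambda$ and $\lambda u^\eta_\lambda$ together with $|x_0-y_0|\le 2K\eta$. The step you flag as the main technical point is in fact immediate: your augmented square roots $(\Sigma,\eta I)$ versus $(\Sigma,0)$ are rigorous exactly as you wrote them, and they amount to the paper's observation that the upper-left block of the matrix inequality gives $X_\varepsilon\preceq(\alpha+2\varepsilon\alpha^2)I$, whence $\eta^2\tr X_\varepsilon\le n(\eta+2\varepsilon)$. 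One small correction: Crandall--Ishii gives only $X\succeq D^2u^\eta_\lambda(x_0)$ rather than equality, which is harmless by degenerate ellipticity.
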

\begin{proof}
We give a proof based on the doubling variables method of Crandall and Lions \cite{Crandall_Lions84}. An alternative proof can be given via the nonlinear adjoint method \cite{Evans_adjoint2010, Tran_adjoint2011}.

For $\alpha>0$, we consider a function $\Phi_\alpha(x,y):\R^n\times\R^n\to\R$ defined by
\begin{equation*}
	\Phi_\alpha(x,y):=u_\lambda^\eta(x)-u_\lambda(y)-\frac{\alpha}{2}|x-y|^2,\quad \text{for all}~x, y\in\R^n.
\end{equation*}
Since $u_\lambda^\eta, u_\lambda$ are $\Z^n$-periodic, $\Phi_\alpha$ admits a global maximum $x_\alpha, y_\alpha\in [0,1]^n$  (the unit cube),
\begin{equation*}
u_\lambda^\eta(x_\alpha)-u_\lambda(y_\alpha)-\frac{\alpha}{2}|x_\alpha-y_\alpha|^2=\max_{x,y\in\R^n}\Phi_\alpha(x,y).
\end{equation*}
By the Crandall-Ishii Lemma, see \cite[Theorem 3.2]{Userguide92}, there exist matrices $X_\varepsilon,Y_\varepsilon\in\sym^n$  for every $\varepsilon>0$ such that
\[
\big(\alpha(x_\alpha-y_\alpha), X_\varepsilon\big)\in\overline{J}^{2,+}u_\lambda^\eta(x_\alpha),\qquad
\big(\alpha(x_\alpha-y_\alpha),Y_\varepsilon\big)\in\overline{J}^{2,-}u_\lambda(y_\alpha),
\]
and
\begin{equation}\label{XeYe}
\begin{pmatrix}
X_\varepsilon & 0 \\
0 & -Y_\varepsilon
\end{pmatrix}\preceq B+\varepsilon B^2,
\qquad\text{where }B:=\alpha \begin{pmatrix}
I_n & -I_n \\
-I_n & I_n
\end{pmatrix}.	
\end{equation}
Since $u_\lambda^\eta$ and $u_\lambda$ are viscosity solutions of
\eqref{eq_regularized} and \eqref{eq_intro1}, respectively, we have 
\begin{equation*}
\begin{aligned}
f(x_\alpha,\lambda u_\lambda^\eta(x_\alpha))+H(x_\alpha, \alpha(x_\alpha-y_\alpha))+\lambda V(x_\alpha) &\le  \eta^2 \tr\bigl(X_\varepsilon\bigr)+\tr\bigl(A(x_\alpha)X_\varepsilon\bigr)+ c_H\,,\\[2pt]
f(y_\alpha,\lambda u_\lambda(y_\alpha))+H(y_\alpha, \alpha(x_\alpha-y_\alpha))+\lambda V(y_\alpha) &\ge \tr\bigl(A(y_\alpha)Y_\varepsilon\bigr)+c_H\,.
\end{aligned}
\end{equation*}
Subtracting the two inequalities, we obtain
\begin{equation}\label{hlgr0}
\begin{aligned}
f(x_\alpha,\lambda u_\lambda^\eta(x_\alpha))-f(y_\alpha,\lambda u_\lambda(y_\alpha))\leq & H(y_\alpha, \alpha(x_\alpha-y_\alpha))-H(x_\alpha, \alpha(x_\alpha-y_\alpha))\\
	&+\lambda \bigl(V(y_\alpha)-V(x_\alpha)\bigr)\\
	&+\eta^2 \tr\bigl(X_\varepsilon\bigr)+\tr\bigl(A(x_\alpha)X_\varepsilon\bigr)-\tr\bigl(A(y_\alpha)Y_\varepsilon\bigr).
\end{aligned}
\end{equation}

We now estimate the right-hand side of \eqref{hlgr0}. Using $\Phi_\alpha(x_\alpha, y_\alpha)\geq \Phi_\alpha(x_\alpha, x_\alpha)$ yields 
$\alpha |x_\alpha-y_\alpha|\leq 2\Lip(u_\lambda).$ By the equi-Lipschitz property of $u_\lambda$ established in Theorem \ref{The_equiestimates}, 
\begin{equation}\label{xayabod}
	\alpha |x_\alpha-y_\alpha|\leq C_1
\end{equation}
for some constant $C_1>0$ independent of $\lambda$. Since $H\in C^2(\T^n\times\R^n)$ and $V\in \Lip(\T^n)$, 
\begin{equation*}
H(y_\alpha, \alpha(x_\alpha-y_\alpha))-H(x_\alpha, \alpha(x_\alpha-y_\alpha))
	+\lambda \bigl(V(y_\alpha)-V(x_\alpha)\bigr)\leq \frac{C_2}{\alpha}
\end{equation*}
for some constant $C_2>0$ independent of $\lambda, \eta$. Also, \eqref{XeYe} implies $X_\vep \preceq \alpha I_n+2\vep \alpha^2 I_n$, so
\begin{equation*}
	\eta^2 \tr\bigl(X_\varepsilon\bigr)\leq n(\alpha+2\vep\alpha^2)\eta^2.
\end{equation*}
Let us turn to estimate $\tr\bigl(A(x_\alpha)X_\varepsilon\bigr)-\tr\bigl(A(y_\alpha)Y_\varepsilon\bigr)$. Recall that $A(x)=\Sigma(x)^{\trans}\Sigma(x)$ by \eqref{squareofA}, we define a matrix
\[
G:=\begin{pmatrix}\Sigma(x_\alpha) & \Sigma(y_\alpha)\end{pmatrix}^\trans
      \begin{pmatrix}\Sigma(x_\alpha) & \Sigma(y_\alpha)\end{pmatrix}
    =\begin{pmatrix}
\Sigma(x_\alpha)^\trans\Sigma(x_\alpha) & \Sigma(x_\alpha)^\trans\Sigma(y_\alpha)\\
\Sigma(y_\alpha)^\trans\Sigma(x_\alpha) & \Sigma(y_\alpha)^\trans\Sigma(y_\alpha)
\end{pmatrix}\,,
\]
which is positive semidefinite. Multiplying \eqref{XeYe} by $G$ and taking the trace yields
\begin{equation*}
\tr\bigl(A(x_\alpha)X_\varepsilon\bigr)-\tr\bigl(A(y_\alpha)Y_\varepsilon\bigr)
  \le\tr(GB)+\varepsilon\,\tr(GB^2).
\end{equation*}
Here, in view of \eqref{xayabod}, 
\begin{equation*}
\tr(GB)
=
\alpha\,\tr\Bigl[\bigl(\Sigma(x_\alpha)-\Sigma(y_\alpha)\bigr)^{\trans}
\bigl(\Sigma(x_\alpha)-\Sigma(y_\alpha)\bigr)\Bigr]
\le
\alpha \,\Big(\Lip(\Sigma)|x_\alpha-y_\alpha|\Big)^2\leq \frac{ \big(C_1\Lip(\Sigma)\big)^2}{\alpha}.
\end{equation*}
Substituting the above estimates into \eqref{hlgr0} and then letting $\varepsilon\to0^+$, we obtain
\begin{equation}\label{fxlam0}
	f(x_\alpha,\lambda u_\lambda^\eta(x_\alpha))-f(y_\alpha,\lambda u_\lambda(y_\alpha))\leq C_3\left(\frac{1}{\alpha} +\alpha \eta^2\right).
\end{equation}
for some constant $C_3>0$.

Using the equiboundedness of $u_\lambda$ (see Theorem \ref{The_Bernst_Lip}) together with $\partial_rf(x,r)>0$ , the left-hand side of \eqref{fxlam0} satisfies
\begin{align}
	f(x_\alpha,\lambda u_\lambda^\eta(x_\alpha))-f(y_\alpha,\lambda u_\lambda(y_\alpha))= &f(x_\alpha,\lambda u_\lambda^\eta(x_\alpha))-f(x_\alpha,\lambda u_\lambda(y_\alpha))\nonumber\\
	&\quad+f(x_\alpha,\lambda u_\lambda(y_\alpha))-f(y_\alpha,\lambda u_\lambda(y_\alpha))\nonumber\\
	\geq &\lambda \,d_\alpha\,\big[u_\lambda^\eta(x_\alpha)-u_\lambda(y_\alpha)\big]-C_4|x_\alpha-y_\alpha|\label{jrfw11}
\end{align}
for some constant $C_4>0$, and $d_\alpha>0$ is given by
\begin{equation*}
	d_\alpha:=\int_0^1 \frac{\partial f}{\partial r}\Big(x_\alpha, \lambda \big[tu^\eta_\lambda(x_\alpha) +(1-t) u_\lambda(y_\alpha)\big]\Big)\,\diff t.
\end{equation*}
Note that by Lemma \ref{Lem_exst_ulameta} and  Theorem \ref{The_equiestimates}, $\lambda u_\lambda^\eta$ and $\lambda u_\lambda$ are uniformly bounded, so $d_\alpha\geq \kappa_0$ for some $\kappa_0>0$ independent of $\lambda, \eta$.

Now, for any $x$, we have
\begin{equation} \label{rfw22}
	u_\lambda^\eta(x_\alpha)-u_\lambda(y_\alpha)=\Phi_\alpha(x_\alpha, y_\alpha)\geq \Phi_\alpha(x,x)= u_\lambda^\eta(x)-u_\lambda(x).
\end{equation}
Combining \eqref{jrfw11}, \eqref{rfw22}, and \eqref{fxlam0}, we obtain that for any $x$,
\begin{equation*}
	\lambda \,\kappa_0\,\big[ u_\lambda^\eta(x)-u_\lambda(x)\big] \leq C_3\left(\frac{1}{\alpha} +\alpha \eta^2\right)+C_4|x_\alpha-y_\alpha|.
\end{equation*}
Choosing $\alpha=\eta^{-1}$ and using \eqref{xayabod}, we then get
\begin{equation*}
	 u_\lambda^\eta(x)-u_\lambda(x)\leq C_5\frac{\eta}{\lambda}, \quad \text{for all $x\in\T^n$}
\end{equation*}
for some constant $C_5>0$ independent of $\lambda, \eta$. By repeating the above argument, we can prove the other inequality in a similar way. 
\end{proof}

\subsection{Construction of Generalized Mather Measures via the Adjoint Method}\label{subsection_MatherMeasures}
We now use the nonlinear adjoint method introduced by Evans \cite{Evans_adjoint2010}, see also \cite{Tran_adjoint2011, Mitake_Tran2017, Tran_book2021}, to construct generalized Mather measures that will determine the limiting constraints in Theorem \ref{mainresult1}. 

We first recall the definition of generalized Mather measures in the Lagrangian formulation. Let $L:\T^n\times \R^n \to \R$ be the Lagrangian associated with the Hamiltonian $H$, i.e.,
\[ L(x,v):=\sup_{p\in\R^n} \big(p\cdot v-H(x,p)\big).\]
Due to assumption (H1) on $H$, the Lagrangian $L$ is of class $C^1$, superlinear, and strictly convex in $v$. Consider the minimization problem 
\begin{equation}\label{miniProblem}
	\inf_{\tilde\mu\in \tilde\cH}\int_{\T^n\times\R^n} L(x,v)\,\diff\tilde\mu(x,v)\,,
\end{equation}
where 
\begin{equation*}
	\tilde\cH:=\left\{\tilde\mu\in\mathcal{P}(\T^n\times\R^n):\int_{\T^n\times\R^n}  v\cdot D\phi(x)- \tr\big( A(x) D^2\phi\big) \diff\tilde\mu(x,v) = 0, ~ \forall~\phi\in C^2(\T^n)\right\}.
\end{equation*}
Here, $\mathcal{P}(\T^n\times\R^n)$ denotes the space of Radon probability measures on $\T^n\times\R^n$. Measures in $\tilde\cH$ are called \emph{holonomic} measures associated with the ergodic problem \eqref{eq_intro2}. It is known that the minimum value of \eqref{miniProblem} is equal to $-c_H$.

\begin{defn}[Generalized Mather measure]\label{generMMdef}
A generalized Mather measure $\tilde\mu$ is a minimizer of the minimization problem \eqref{miniProblem}. We denote by $\tilde\cM$ the set of all generalized Mather measures. 
\end{defn}

Generalized Mather measures for the ergodic problem \eqref{eq_intro2} were first  introduced by Gomes \cite{Gomes2002_stochasticMather, Gomes2008} in the setting of the stochastic Mather problem with $A(x)=I_n$. They can be viewed as a natural extension of the classical Mather measures \cite{Mather1991, Mane1996} associated with first-order Hamilton--Jacobi equations (i.e., when $A(x)=0$). For more general diffusion matrices $A$, see for instance \cite{Cagnetti_Gomes_Tran2011, Mitake_Tran2017, Ishii_Mitake_Tran2017_1,Mitake_Tran_uniqueness_set, Gomes_Mitake_Tran2022}. 

We shall also use the Hamiltonian representation of these measures. Let
\begin{equation}\label{legendre}
    \cL:\T^n\times\R^n\to\T^n\times\R^n,
    \qquad
    \cL(x,v):=\big(x,D_vL(x,v)\big)
\end{equation}
be the Legendre transform from Lagrangian variables to Hamiltonian variables. For a generalized Mather measure $\tilde\mu\in\tilde\cM$, we denote by
$\tilde\nu:=\mathcal L_\#\tilde\mu$ its pushforward under $\mathcal L$. This means that for every continuous and bounded test function $\psi$ on $\T^n\times\R^n$,
\begin{equation}\label{dualrela}
    \int_{\T^n\times\R^n}\psi(x,p)\,\diff\tilde\nu(x,p)
    =
    \int_{\T^n\times\R^n}\psi\big(x,D_vL(x,v)\big)\,\diff\tilde\mu(x,v).
\end{equation}
Set
\begin{equation*}
	\tilde\cM^*:=\cL_\#\tilde\cM\,.
\end{equation*}
Then every measure $\tilde\nu\in\tilde\cM^*$ satisfies the following identities:
\[\int_{\T^n\times\R^n} D_pH(x, p)\cdot p -H(x, p)\,\diff \tilde\nu(x,p) = -c_H\,,\]
and, for every $\phi\in C^2(\T^n)$,
\[\int_{\T^n\times\R^n}  D_pH(x, p)\cdot D\phi(x) - \tr\bigl( A(x) D^2\phi\bigr)\, \diff \tilde\nu(x,p) = 0.\]

\smallskip

In the present setting, we apply the nonlinear adjoint method to the regularized equation \eqref{eq_regularized}, to derive an approximation scheme for constructing generalized Mather measures.

\smallskip

\noindent\textbf{Nonlinear adjoint method.}  By Lemma \ref{Lem_exst_ulameta}, the regularized equation \eqref{eq_regularized} admits a unique $C^2$ solution $u_\lambda^\eta$ for every $\lambda\in (0,\lambda_0]$ and $\eta\in (0,\eta_0]$. This regularity allows us to introduce the associated adjoint equation.

 Fix an arbitrary point $x_0\in\mathbb{T}^n$, and let $\delta_{x_0}$ denote the Dirac delta measure. We introduce the \textbf{adjoint equation} associated with the linearization of \eqref{eq_regularized}:
\begin{equation}\label{eq_adjoint}\tag{$\rm AJ^\eta_\lambda$}
    \lambda \partial_r f(x,\lambda u_\lambda^\eta) \sigma_\lambda^\eta 
    - \operatorname{div}\bigl(D_pH(x, Du_\lambda^\eta) \sigma_\lambda^\eta\bigr)
    = \eta^{2}\Delta \sigma_\lambda^\eta 
      + \sum_{i,j=1}^{n} (a_{ij} \sigma_\lambda^\eta)_{x_i x_j}
      + \lambda \delta_{x_0} \quad \text{in}~ \mathbb{T}^n
\end{equation}
This equation admits a unique solution $\sigma_\lambda^\eta$. Owing to the singular source term $\lambda\delta_{x_0}$, the function $\sigma_\lambda^\eta$ is not smooth globally on $\T^n$, but it is of class $C^2$ on $\T^n\setminus\{x_0\}$. Therefore, $\sigma_\lambda^\eta$ is understood in the distributional sense: for every test function $\phi\in C^2(\T^n)$.
\begin{equation}\label{distribu_sense}
\int_{\T^n}
 \Big[
\lambda \partial_r f(x,\lambda u_\lambda^\eta)\phi + D_pH(x,Du_\lambda^\eta)\cdot D\phi- \eta^2 \Delta \phi- \tr\big( A(x) D^2\phi\big)\Big]\,\sigma_\lambda^\eta(x)\diff x=\lambda \phi(x_0).
\end{equation}
\,

\begin{Lem}[Properties of $\sigma_\lambda^\eta$]\label{proper_sigma}
The solution $\sigma_\lambda^\eta$ of \eqref{eq_adjoint} satisfies
 \begin{equation}\label{imprea20}
 	\sigma_\lambda^\eta(x) > 0 \quad \text{for}~x\in \T ^n\setminus\{x_0\}, \qquad \text{and}\qquad
    \int_{\T^n}\partial_r f(x,\lambda u_\lambda^\eta)\,\sigma_\lambda^\eta(x)\diff x=1.
 \end{equation}
Moreover, there exist constants $m_0>0$ and $m_1>0$, independent of $\lambda$ and $\eta$, such that
      \begin{equation}\label{imprea21}	 
      m_0\leq \int_{\T ^n} \sigma_\lambda^\eta(x)\diff x\leq m_1 \qquad \text{for all } \lambda\in (0,\lambda_0],~ \eta\in (0,\eta_0].
      \end{equation}
\end{Lem}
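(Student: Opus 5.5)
The plan is to obtain all three assertions by testing the distributional formulation \eqref{distribu_sense} against well-chosen functions, together with a maximum-principle argument for positivity.

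\textbf{Normalization.} Take $\phi\equiv 1$ in \eqref{distribu_sense}. All derivative terms vanish, so
\[
\lambda\int_{\T^n}\partial_r f(x,\lambda u_\lambda^\eta)\,\sigma_\lambda^\eta\,\diff x=\lambda .
\]
Dividing by $\lambda$ gives the identity in \eqref{imprea20}.

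\textbf{Positivity.} The adjoint operator is uniformly elliptic because of the term $\eta^2\Delta$. Its coefficients are smooth enough: $u_\lambda^\eta\in C^2$, $H\in C^2$ and $A\in C^2$, and if needed I will bootstrap to $u_\lambda^\eta\in C^{2,\alpha}$ by Schauder theory. I argue by duality. For $g\ge 0$ smooth, let $\phi$ solve the linearized equation
\[
\lambda\partial_r f\,\phi+D_pH\cdot D\phi-\eta^2\Delta\phi-\tr(AD^2\phi)=g .
\]
The zeroth-order coefficient $\lambda\partial_r f$ is positive, so the strong maximum principle gives $\phi\ge 0$, and in fact $\phi>0$ unless $g\equiv 0$. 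Pairing with \eqref{distribu_sense} yields $\int g\,\sigma_\lambda^\eta=\lambda\phi(x_0)\ge 0$. Hence $\sigma_\lambda^\eta\ge0$.

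To get strict positivity away from $x_0$, apply the strong maximum principle (Harnack) to the nonnegative solution $\sigma_\lambda^\eta$ of the homogeneous adjoint equation on $\T^n\setminus\{x_0\}$. Its coefficients are $C^1$ there after expanding $(a_{ij}\sigma)_{x_ix_j}$, which uses $A\in C^2$. So either $\sigma_\lambda^\eta>0$ or $\sigma_\lambda^\eta\equiv 0$ on this connected set. The second option contradicts the normalization when $n\ge2$. When $n=1$ the set is still connected, an interval on the circle, so the same argument applies.

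\textbf{Two-sided mass bound.} By Lemma \ref{Lem_exst_ulameta}, $\lambda u_\lambda^\eta$ takes values in a fixed compact interval $[-R,R]$ independent of $\lambda,\eta$. By (H3) and continuity of $\partial_r f$ on the compact set $\T^n\times[-R,R]$, there are constants
\[
0<\kappa_0\le\partial_r f(x,\lambda u_\lambda^\eta(x))\le\kappa_1 .
\]
Since $\sigma_\lambda^\eta\ge0$, the normalization gives $\kappa_0\int\sigma_\lambda^\eta\le 1\le\kappa_1\int\sigma_\lambda^\eta$. Thus \eqref{imprea21} holds with $m_0=1/\kappa_1$ and $m_1=1/\kappa_0$.

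The main obstacle is the positivity step. It requires justifying the duality and maximum-principle argument when $\sigma_\lambda^\eta$ has a singularity at $x_0$ and the coefficients have only limited regularity. I would first establish $\sigma\ge0$ by duality, which needs only the weak formulation. Then I would invoke the strong maximum principle locally on $\T^n\setminus\{x_0\}$, where $\sigma_\lambda^\eta$ is $C^2$.
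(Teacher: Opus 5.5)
Your proof is correct and follows the paper's argument. The normalization comes from testing \eqref{distribu_sense} with $\phi\equiv 1$, the mass bounds come from two-sided bounds on $\partial_r f$ over $\T^n\times[-R,R]$ (giving the same $m_0=1/\kappa_1$ and $m_1=1/\kappa_0$), and positivity comes from the strong maximum principle. Your duality step, which first proves $\sigma_\lambda^\eta\ge 0$, is a worthwhile addition: the paper's one-line appeal to the strong maximum principle tacitly needs that nonnegativity. One minor technical point: $\partial_r f(x,\lambda u_\lambda^\eta)$ is only continuous, so Schauder theory does not give a $C^2$ solution of the dual problem. You would instead solve it in $W^{2,p}$, or smooth the zeroth-order coefficient and pass to the limit.
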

\begin{proof}
The positivity of $\sigma_\lambda^\eta$ on $\T^n\setminus\{x_0\}$ follows from the strong maximum principle. The integral identity follows immediately from \eqref{distribu_sense} by choosing $\phi\equiv 1$. To prove \eqref{imprea21}, we note that by Lemma \ref{Lem_exst_ulameta}, there is a constant $R>0$ such that
$
\lambda \|u_\lambda^\eta\|_{L^\infty(\T^n)}\le R
$
for all $\lambda\in(0,\lambda_0]$ and $\eta\in(0,\eta_0]$.  Since $\partial_r f>0$, we may define
\[
m_0:=\left(\max_{\substack{x\in\T^n\\ |r|\le R}}\partial_r f(x,r)\right)^{-1},
\qquad
m_1:=\left(\min_{\substack{x\in\T^n\\ |r|\le R}}\partial_r f(x,r)\right)^{-1},
\]
which are both strictly positive. Hence, \eqref{imprea20} immediately yields \eqref{imprea21}.
\end{proof}

\noindent\textbf{Construction of generalized Mather measures.} Following the techniques in \cite{Cagnetti_Gomes_Tran2011, Cagnetti_Gomes_Mitake_Tran2015, Mitake_Tran2017, Gomes_Mitake_Tran2022}, we now use the solution $u_\lambda^\eta$ of the regularized Hamilton–Jacobi equation \eqref{eq_regularized} combined with the solution $\sigma_\lambda^\eta$ of its adjoint equation \eqref{eq_adjoint} to derive a special class of generalized Mather measures.

In light of the Riesz representation theorem, for each $\lambda\in (0,\lambda_0]$ and $\eta\in (0,\eta_0]$, there exists a probability measure $\tilde\nu_\lambda^\eta\in \mathcal{P}(\T^n\times\R^n)$ satisfying 
\begin{equation}\label{def_probmea}
	\int_{\T^n\times\R^n} \psi(x,p)\diff \tilde\nu_\lambda^\eta(x,p)~=~\frac{1}{\int_{\T^n} \sigma_\lambda^\eta(x)\diff x} \int_{\T^n} \psi(x,Du_\lambda^\eta)\,\sigma_\lambda^\eta(x)\diff x
\end{equation}
for all $\psi\in C_c(\T^n\times\R^n)$. Thanks to Lemma \ref{Lem_exst_ulameta} and \eqref{imprea21} from Lemma	\ref{proper_sigma}, the family $\{\tilde\nu_\lambda^\eta\}$ is uniformly compactly supported. Consequently, \eqref{def_probmea} remains valid for every $\psi\in C(\T^n\times\R^n)$. 

By weak compactness of probability measures, there exist two subsequences $\eta_k\to 0$ and $\lambda_j\to 0$, respectively, and probability measures $\tilde\nu_{\lambda_j}, \tilde\nu\in \mathcal{P}(\T^n\times\R^n)$ such that
\begin{equation}\label{meapprosche}
\tilde\nu_{\lambda_j}^{\eta_k} \rightharpoonup \tilde\nu_{\lambda_j} \quad (\text{as}~k\to\infty), \qquad
\tilde\nu_{\lambda_j} \rightharpoonup \tilde\nu \quad (\text{as}~j\to\infty),
\end{equation}
weakly in the sense of measures in $\mathcal{P}(\T^n\times\R^n)$. 

The limiting measure $\tilde\nu$ obtained via the above approximation procedure \eqref{meapprosche} depends on the choice of $x_0\in\T^n$ in \eqref{eq_adjoint} and the subsequences $\{\eta_k\}_k, \{\lambda_j\}_j$. Accordingly, sometimes we write $\tilde\nu=\tilde\nu\left(x_0, \{\eta_k\}_k, \{\lambda_j\}_j\right)$ when it is necessary to emphasize this dependence. In general, different choices of $x_0$, $\{\eta_k\}_k$, $\{\lambda_j\}_j$ may lead to different limiting measures. We therefore denote by $\tilde\cM^*_{\rm g}$ the collection of all such  limiting measures:
\begin{equation}\label{Mgstar}
	\tilde\cM^*_{\rm g}~:=~\bigcup_{x_0, \{\eta_k\}_k, \{\lambda_j\}_j}  \tilde\nu\left(x_0, \{\eta_k\}_k, \{\lambda_j\}_j\right)\,.
\end{equation}

Accordingly, for each $\tilde\nu\in \tilde\cM^*_{\rm g}$, let $\tilde\mu={\cL}^{-1}_\#\tilde\nu$ be the pushforward of $\tilde\nu$ under the Legendre transform $\cL^{-1}$, see \eqref{legendre}. We then set
\begin{equation}\label{Mg_measures}
	\tilde\cM_{\rm g}:=\cL^{-1}_\#\tilde\cM^*_{\rm g}\,.
\end{equation}
The following result shows that $\tilde\cM_{\rm g}$ is a subset of $\tilde\cM$. 
\begin{Pro}\label{prop:minimizing}
Each measure $\tilde\mu \in \tilde\cM_{\rm g}$ is a generalized Mather measure. More precisely, since $\tilde\mu={\cL}^{-1}_\#\tilde\nu$ with $\tilde\nu\in\tilde\cM^*_{\rm g}$, we have
 \begin{enumerate}[\rm (i)]
 	  	\item $\displaystyle\int_{\T^n\times\R^n} D_pH(x, p)\cdot p -H(x, p)\,\diff \tilde\nu(x,p) = \int_{\T^n\times\R^n}L(x,v)\,\diff \tilde\mu(x,v)=-c_H\,.$
 	  	  
 	  	  \medskip
 	  	\item For every $\phi\in C^2(\T^n)$,
 	     \begin{align*}
 	     	&\int_{\T^n\times\R^n}  D_pH(x, p)\cdot D\phi(x) - \tr\big( A(x) D^2\phi\big)\, \diff \tilde\nu(x,p) \\
 	     	=\quad &\int_{\T^n\times\R^n}  v\cdot D\phi(x)-\tr\big( A(x) D^2\phi\big)\,\diff\tilde\mu(x,v)=0.
 	     \end{align*}
 \end{enumerate} 
\end{Pro}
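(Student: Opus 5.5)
The plan is to prove (ii) first, then (i), working at the level $(\lambda,\eta)$ and passing to the limit through the two-step scheme \eqref{meapprosche}. The Legendre identities linking the $\tilde\nu$-integrals to the $\tilde\mu$-integrals are immediate from \eqref{dualrela}: if $v=D_pH(x,p)$, then $L(x,v)=p\cdot D_pH(x,p)-H(x,p)$. All the work is therefore on the Hamiltonian side.

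\textbf{Holonomy, part (ii).} Fix $\phi\in C^2(\T^n)$ and test \eqref{distribu_sense} with $\phi$. Dividing by $m_\lambda^\eta:=\int\sigma_\lambda^\eta\,dx$ and using \eqref{def_probmea}, we get
\[
\int D_pH(x,p)\cdot D\phi-\tr(AD^2\phi)\,d\tilde\nu_\lambda^\eta=\frac{1}{m_\lambda^\eta}\Big(\lambda\phi(x_0)-\lambda\int\partial_rf(x,\lambda u_\lambda^\eta)\phi\,\sigma_\lambda^\eta+\eta^2\int\Delta\phi\,\sigma_\lambda^\eta\Big).
\]
By \eqref{imprea20} and \eqref{imprea21}, the right-hand side is bounded by $C(\lambda+\eta^2)\|\phi\|_{C^2}$. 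The integrand on the left is continuous in $(x,p)$, and the measures are uniformly compactly supported. Hence we can let $\eta_k\to0$ and then $\lambda_j\to0$ and obtain (ii).

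\textbf{Energy identity, part (i).} Test \eqref{distribu_sense} with $\phi=u_\lambda^\eta$, which is admissible since $u_\lambda^\eta\in C^2$. Then substitute $\eta^2\Delta u_\lambda^\eta+\tr(AD^2u_\lambda^\eta)=f(x,\lambda u_\lambda^\eta)+H(x,Du_\lambda^\eta)+\lambda V-c_H$ from \eqref{eq_regularized}. Dividing by $m_\lambda^\eta$ gives
\[
\int D_pH\cdot p-H\,d\tilde\nu_\lambda^\eta=-c_H+\frac{1}{m_\lambda^\eta}\int\Big[f(x,\lambda u_\lambda^\eta)+\lambda V-\lambda\partial_rf(x,\lambda u_\lambda^\eta)u_\lambda^\eta\Big]\sigma_\lambda^\eta\,dx+\frac{\lambda u_\lambda^\eta(x_0)}{m_\lambda^\eta}.
\]

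The main obstacle is showing that the error terms vanish in the iterated limit. The bounds of Lemma~\ref{Lem_exst_ulameta} only give $\lambda\|u_\lambda^\eta\|_\infty\le C$, which is not enough. The fix is to use Proposition~\ref{Pro_convergerate} together with Theorem~\ref{The_equiestimates}: for fixed $\lambda$, $\|u_\lambda^\eta-u_\lambda\|_\infty\le C\eta/\lambda$, and $\{u_\lambda\}$ is equibounded. So as $\eta_k\to0$ with $\lambda_j$ fixed, $\|u_{\lambda_j}^{\eta_k}\|_\infty\le M$ for large $k$, with $M$ independent of $j$.

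With this bound in hand, write $f(x,\lambda u)=\lambda\partial_rf(x,\xi)u$ by Taylor's formula, using $f(x,0)=0$. Every bracketed term is then $O(\lambda M)$ uniformly in $x$, and $\lambda u_\lambda^\eta(x_0)=O(\lambda M)$. Combined with $m_0\le m_\lambda^\eta\le m_1$ from \eqref{imprea21}, the whole error is $O(\lambda_j)$ after $k\to\infty$.

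Finally, pass to the limit. Since $D_pH\cdot p-H$ is continuous and the supports are uniformly compact, letting $k\to\infty$ and then $j\to\infty$ yields (i).

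Minimality then follows: $\tilde\mu$ is holonomic by (ii), and by (i) it attains the minimum value $-c_H$ of \eqref{miniProblem}, so $\tilde\mu\in\tilde\cM$.
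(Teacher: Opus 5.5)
Your proposal is correct and follows essentially the same route as the paper. You test the adjoint identity \eqref{distribu_sense} with $\phi$ and with $\phi=u_\lambda^\eta$, control the error terms through Proposition~\ref{Pro_convergerate} and the equiboundedness of $u_\lambda$, and pass to the iterated limit using the uniform compact support. The differences are cosmetic: you prove (ii) before (i), and you use the cruder bound $O(\lambda_j)$ after $k\to\infty$ instead of the modulus estimate \eqref{eq_modulus}, which is sufficient here.
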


\begin{proof}
Recall that the solution $u_\lambda^\eta$ of \eqref{eq_regularized} is $C^2$, and we can rewrite \eqref{eq_regularized} as
\begin{equation*}
\begin{aligned}
  &D_pH(x, Du^\eta_\lambda) \cdot Du^\eta_\lambda -H(x, Du^\eta_\lambda)\\
  =~ &D_pH(x, Du^\eta_\lambda)\cdot Du^\eta_\lambda-\eta^2\Delta u_\lambda^\eta-\tr\bigl( A(x) D^2u^\eta_\lambda\bigr)+f(x,\lambda u^\eta_\lambda)+\lambda V(x)-c_H.
\end{aligned}
\end{equation*}
Multiplying this by $\sigma^\eta_\lambda$, integrating over $\T^n$, and applying \eqref{distribu_sense} with $\phi=u_\lambda^\eta$, we obtain 
\begin{equation}\label{simw01}
\begin{aligned}
  &\int_{\T^n}\bigl[D_pH(x, Du^\eta_\lambda)\cdot Du^\eta_\lambda -H(x, Du^\eta_\lambda)\bigr]\,\sigma^\eta_\lambda\,\diff x\\
  =~&\lambda u_\lambda^\eta(x_0)+\int_{\T^n}\bigl[f(x,\lambda u^\eta_\lambda)-\lambda \partial_r f(x,\lambda u_\lambda^\eta ) u^\eta_\lambda+\lambda V(x)-c_H\bigr]\,\sigma_\lambda^\eta \,\diff x.
\end{aligned}
\end{equation}
Using \eqref{def_probmea}, the above identity \eqref{simw01} can be rewritten as
\begin{equation}\label{simw02}
\begin{aligned}
  &\int_{\T^n\times\R^n} D_pH(x, p)\cdot p -H(x, p)\, \diff \tilde\nu^\eta_\lambda(x,p)\\
  =~&\frac{\lambda u_\lambda^\eta(x_0) +\displaystyle{\int_{\T^n}\bigl[f(x,\lambda u^\eta_\lambda)-\lambda \partial_r f(x,\lambda u_\lambda^\eta ) u^\eta_\lambda+\lambda V(x)-c_H\bigr]\,\sigma_\lambda^\eta(x) \diff x}}{\displaystyle{\int_{\T^n} \sigma_\lambda^\eta(x)\diff x}}.
\end{aligned}
\end{equation}
The denominator $\int_{\T^n} \sigma_\lambda^\eta(x)\,\diff x\geq m_0>0$ by \eqref{imprea21} in Lemma \ref{proper_sigma}. Furthermore, by our assumption (H3), $f(x,r)\in C^1$ and $f(x,0)\equiv 0$, so  
\begin{equation}\label{eq_modulus} 
|f(x,\lambda u^\eta_\lambda)-\lambda \partial_r f(x,\lambda u_\lambda^\eta )u^\eta_\lambda| \leq \lambda\|u^\eta_\lambda\|_{\infty}\,\omega\bigl(\lambda\|u^\eta_\lambda\|_{\infty}\bigr),
\end{equation}
for some modulus of continuity $\omega$ with $\lim_{s\to 0^+}\omega(s)=0$. By Proposition \ref{Pro_convergerate} and the equiboundedness of $\{u_\lambda\}_\lambda$ established in Theorem \ref{The_equiestimates},
\begin{equation}\label{eq_bound_uel}
\lambda\|u^\eta_\lambda\|_{\infty}\leq C\eta +\lambda\|u_\lambda\|_{\infty}\leq C'(\lambda+\eta).
\end{equation}
Since $\tilde\nu=\tilde\nu\left(x_0, \{\eta_k\}_k, \{\lambda_j\}_j\right)\in \tilde\cM^*_{\rm g}$, we now set $\eta=\eta_k$ and $\lambda=\lambda_j$ in \eqref{simw02}, and let $k\to\infty$ and then $j\to\infty$. Combining this with \eqref{eq_modulus}--\eqref{eq_bound_uel}, and in view of \eqref{dualrela}, we obtain
\begin{equation*}
	  \int_{\T^n\times\R^n}D_pH(x, p)\cdot p -H(x, p)\,\diff\tilde\nu(x,p) = \int_{\T^n\times\R^n}L(x,v)\,\diff \tilde\mu(x,v)=-c_H.
\end{equation*}

We now proceed to prove item (ii). Fix a test function $\phi\in C^2(\T^n)$, by \eqref{distribu_sense} we have 
\begin{equation*}
\begin{aligned}
&\int_{\T^n}\bigl[D_pH(x, Du_\lambda^\eta)\cdot D\phi-\tr\bigl( A(x) D^2\phi\bigr)\bigr]\,\sigma_\lambda^\eta(x)\,\diff x\\
=~&\int_{\T^n}\bigl[\eta^2\Delta\phi-\lambda \partial_r f(x,\lambda u_\lambda^\eta )\phi\bigr]\,\sigma_\lambda^\eta(x)\,\diff x+\lambda \phi(x_0).
\end{aligned}
\end{equation*}
In view of \eqref{def_probmea} this becomes
\begin{equation*}
\begin{aligned}
&\int_{\T^n\times\R^n}\bigl[D_pH(x, p)\cdot D\phi-\tr\bigl( A(x) D^2\phi\bigr)\bigr] \,\diff \tilde\nu_\lambda^\eta(x,p)\\
={}&\int_{\T^n\times\R^n}\bigl[\eta^2\Delta\phi-\lambda \partial_r f(x,\lambda u_\lambda^\eta )\phi\bigr]\,\diff \tilde\nu_\lambda^\eta(x,p)+\frac{\lambda \phi(x_0)}{\displaystyle{\int_{\T^n}\sigma_\lambda^\eta(x)\,\diff x}}.
\end{aligned}
\end{equation*}
Finally, taking $\eta=\eta_k$, $\lambda=\lambda_j$ in the above identity, letting $k\to \infty$ and then $j\to \infty$, gives
\begin{equation*}
\int_{\T^n\times\R^n}\big[D_pH(x, p)\cdot D\phi(x)-\tr\big( A(x) D^2\phi\big)\big] \,\diff \tilde\nu(x,p) = 0. 
\end{equation*}
This completes the proof.
\end{proof}

%%%%%%%%%%%%%%%%%%%%%%%%%%%%%%%%%%%%%%%%%%%%%%%%%%%%%%%%%%%%%%%%%%
\section{Smooth Approximation of Lipschitz Subsolutions}\label{sec_smoothapproximation}
Since viscosity solutions of the ergodic problem \eqref{eq_intro2} are generally only Lipschitz, one needs to handle the lack of smoothness in the asymptotic analysis. More precisely, it is unclear whether every Lipschitz viscosity subsolution $w$ of the ergodic problem \eqref{eq_intro2} can be approximated by a family of smooth approximate subsolutions $\{w_\eta\}$ satisfying  
\begin{equation}
   \|w_\eta-w\|_{L^\infty}\leq O(\eta) \quad\text{and}\quad H(x, Dw_\eta)\leq \tr \big(A(x) D^2w_\eta\big)+c_H+O(\eta)\,.
\end{equation}
Such approximation results are well known in the first-order case $A(x)\equiv0$. In the second-order setting, the problem becomes significantly more delicate due to the degenerate diffusion term $\tr\big(A(x)D^2w\big)$, and this is one of the main differences between first- and second-order equations. The commutation lemma of \cite{Mitake_Tran2017} is one technical device to handle this issue in the scalar diffusion case $A(x)=a(x)I_n$. However, it is still unknown whether an analogous result holds for a general diffusion matrix. 

We therefore adopt a different approximation procedure in this section. We establish a weaker smooth approximation result, which is sufficient for the purposes of our selection problem. Let $c_0\in \R$, and suppose that $w\in \Lip(\T^n)$ is a viscosity subsolution of
\begin{equation}\label{criteq_c}
    H(x, Dw)\leq \tr \big(A(x) D^2w\big)+c_0 \qquad \text{in}~ \T^n.
\end{equation}
We construct a family of smooth approximations $\{\wreg\}_{\eta>0}$ of $w$ such that each $\wreg$ satisfies a subsolution inequality for a perturbed equation with an additional viscosity term $-\eta^2\Delta \wreg$, up to an error of order $O(\eta)$. 

\begin{The}\label{The_approx_sub}
Let $w\in \Lip(\T^n)$ be a viscosity subsolution of \eqref{criteq_c}. Then, for any $\eta \in (0,1)$, there exists a function $\wreg \in C^\infty (\T^n)$ such that
\begin{equation}\label{eq_uniform_approx}
\| \wreg - w \|_{L^\infty(\T^n)} \leq C \eta,
\end{equation}
and
\begin{equation}\label{eq_w_epsilon_reg_estimate}
-\eta^2\Delta \wreg-\tr\big(A(x)D^2 \wreg\big)+H\big(x,D\wreg\big)
\leq c_0+ C \eta \qquad \text{in}~ \T^n
\end{equation}
where $C>0$ is a constant independent of $\eta$.
\end{The}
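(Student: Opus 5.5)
Our plan is to build $\wreg$ in two stages: a Lasry--Lions type sup-convolution of $w$, followed by a standard mollification at scale $\eta$. The first stage produces semiconvexity; the second then gains smoothness while controlling the Hessian only from one side.

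\textbf{Stage 1 (sup-convolution).} Regard $w$ as $\Z^n$-periodic on $\R^n$ and set
\[
w^\varepsilon(x):=\sup_{y}\Big(w(y)-\frac{|x-y|^2}{2\varepsilon}\Big),
\]
with $\varepsilon=\eta$. Standard facts give three properties.
\begin{itemize}
\item[(a)] $w^\varepsilon$ is Lipschitz with the constant of $w$, and $\|w^\varepsilon-w\|_\infty\le C\varepsilon$. Indeed the maximizer $y_x$ satisfies $|x-y_x|\le 2\varepsilon\,\Lip(w)$.
\item[(b)] $w^\varepsilon$ is semiconvex: $D^2w^\varepsilon\succeq -\varepsilon^{-1}I_n$ in the sense of distributions.
\item[(c)] $w^\varepsilon$ is a viscosity subsolution of a perturbed inequality
\[
-\tr\big(A(x)D^2w^\varepsilon\big)+H(x,Dw^\varepsilon)\le c_0+C\varepsilon .
\]
To prove (c), note that a test function touching $w^\varepsilon$ at $x$ from above yields, after translation by $y_x-x$, a test function touching $w$ from above at $y_x$ with the same gradient $p$ and Hessian $X$. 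Then $H(x,p)-H(y_x,p)\le C|x-y_x|\le C\varepsilon$ since $|p|\le\Lip(w)$. The trace term requires $\tr\big((A(y_x)-A(x))X\big)$. Here I would use the doubling argument of the Crandall--Ishii lemma, exactly as in the proof of Proposition~\ref{Pro_convergerate}. The sup-convolution is a max of $w(y)-|x-y|^2/(2\varepsilon)$, so one obtains matrices $X,Y$ with the block inequality and $B=\varepsilon^{-1}\begin{pmatrix}I&-I\\-I&I\end{pmatrix}$. Tracing against $G=(\Sigma(x)\ \Sigma(y))^\trans(\Sigma(x)\ \Sigma(y))$ gives an error $\le \varepsilon^{-1}\Lip(\Sigma)^2|x-y_x|^2\le C\varepsilon$. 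This is the genuinely second-order part, and it works for general $A$ because $\Sigma$ is Lipschitz.
\end{itemize}

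\textbf{Stage 2 (mollification).} Since $w^\varepsilon$ is semiconvex and Lipschitz, Alexandrov's theorem and (c) give the inequality in (c) pointwise a.e. with the a.e. Hessian. The distributional Hessian equals the a.e. Hessian plus a nonnegative singular part. Set $\wreg:=w^\varepsilon*\rho_\tau$ with $\tau=\eta^{2}$ (or a suitable power of $\eta$). I would then estimate the three terms of \eqref{eq_w_epsilon_reg_estimate} separately.
\begin{itemize}
\item \emph{Hamiltonian term.} By convexity of $H$ in $p$ and Jensen, $H(x,D\wreg)\le \big(H(\cdot,Dw^\varepsilon)*\rho_\tau\big)(x)+C\tau$.
\item \emph{Trace term.} Write $\tr(A(x)D^2\wreg)=\big(\tr(A\,D^2w^\varepsilon)\big)*\rho_\tau+E$. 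The commutator $E$ involves $(A(x)-A(y))D^2w^\varepsilon(y)$ integrated against $\rho_\tau(x-y)$; integrating by parts once and using $A\in C^2$ bounds $E$ by $C\Lip(w)$. The nonnegative singular part of $D^2w^\varepsilon$ only helps, since $A\succeq0$. Combining with Stage 1 gives the subsolution inequality with error $C(\varepsilon+\tau)+$ the commutator.
\item \emph{Viscosity term.} Semiconvexity gives $-\eta^2\Delta\wreg\le n\eta^2/\varepsilon=n\eta$.
\end{itemize}

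\textbf{Main obstacle.} The commutator $E$ is the hard step. A naive bound gives $O(1)$, not $O(\eta)$, because $|A(x)-A(y)|\le C\tau$ is multiplied by $|D^2w^\varepsilon|$, which is only $O(1/\tau)$ in the mollified sense. I would first try to exploit the one-sided bound $D^2w^\varepsilon\succeq-\varepsilon^{-1}I$. Writing $D^2w^\varepsilon+\varepsilon^{-1}I\succeq0$ and using $A(y)\succeq0$ together with the square-root structure $A=\Sigma^\trans\Sigma$, the positive part should be handled via $\tr\big((\Sigma(x)-\Sigma(y))^\trans(\Sigma(x)-\Sigma(y))M\big)\ge0$-type inequalities. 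Expanding this as in Lemma~\ref{Lem_GtB_uppbd} leaves cross terms of size $|x-y|^2\cdot|M|\lesssim\tau^2\cdot\tau^{-2}$ after integration by parts twice against $\rho_\tau$. If this fails to produce $O(\eta)$, the fallback is to choose $\varepsilon$ and $\tau$ as different powers of $\eta$ and accept $C\eta$ only after tuning. The uniform estimate \eqref{eq_uniform_approx} then follows from $\|w^\varepsilon-w\|_\infty\le C\varepsilon$ and $\|\wreg-w^\varepsilon\|_\infty\le C\tau$.
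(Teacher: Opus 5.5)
\textbf{There is a genuine gap, and it sits exactly at the step that carries the difficulty.} Stage~1 (including the Crandall--Ishii treatment of the trace term), the Jensen step for $H$, and the bound $-\eta^2\Delta\wreg\le n\eta^2/\varepsilon$ are all fine. What is missing is the pointwise lower bound
\[
E(x)=\int\tr\big((A(x)-A(y))\,dD^2w^\varepsilon(y)\big)\rho_\tau(x-y)\ \ge\ -C\eta .
\]
You only have $|E|\le C\Lip(w)$, and the route you sketch cannot improve this, whatever exponents you choose. Set $D:=\Sigma(x)-\Sigma(y)$ and $M:=D^2w^\varepsilon+\varepsilon^{-1}I\succeq0$. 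Then $A(x)-A(y)=D^\trans D+\Sigma(y)^\trans D+D^\trans\Sigma(y)$. The $D^\trans D$ part has the good sign. The cross terms, however, are first order in $|x-y|$ (not second order, as you assert) and have no sign. A sup-convolution gives no upper Hessian bound, so the only control on $M$ is
\[
\int\rho_\tau(x-y)\,\tr\,dM(y)=\Delta\wreg(x)+n/\varepsilon\le C\tau^{-1}+n\varepsilon^{-1}.
\]
Hence the best available bound on the cross terms is of order $\tau(\tau^{-1}+\varepsilon^{-1})\ge 1$, for every choice of $\varepsilon$ and $\tau$.

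\textbf{How the paper closes the gap.} It obtains a two-sided Hessian bound through the Lasry--Lions inf-sup convolution $\wee=(w^{\eta+\varepsilon})_\varepsilon$, which satisfies $-\eta^{-1}I\preceq D^2\wee\preceq\varepsilon^{-1}I$. After mollifying at scale $\delta$, the commutator is at most $\Lip(A)\,\delta\,(\eta^{-1}+\varepsilon^{-1})=O(\eta)$ for $\varepsilon=K\eta^3$, $\delta=\eta^4$. The price is that the inf-convolution need not preserve the subsolution property. This is where the artificial viscosity is really used; in your plan it only enters through semiconvexity. At twice-differentiability points where $\wee=w^\eta$, the inequality is inherited from $w^\eta$. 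Where $\wee>w^\eta$, the number $1/\varepsilon$ is an eigenvalue of $D^2\wee$, so $-\eta^2\Delta\wee\le(n-1)\eta-\eta^2/\varepsilon$, which dominates the remaining terms, bounded by $M_1/\eta+M_2$ (Lemma~\ref{lemma_LLregul}).

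\textbf{A repair within your own architecture.} Replace the exact expansion by a weighted Young inequality applied to the nonnegative density of $M$:
\[
\tr(A(x)M)=\|\Sigma(x)M^{1/2}\|_F^2\ge(1-\theta)\tr(A(y)M)-\theta^{-1}\Lip(\Sigma)^2|x-y|^2\tr M .
\]
You only need a \emph{lower} bound for $\int\rho_\tau(x-y)\tr\big(A(y)\,dM(y)\big)$. The a.e.\ subsolution inequality supplies it, after dropping the nonnegative singular part. The extra errors are then $O\big(\theta+(\theta+\tau)/\varepsilon+\theta^{-1}(\tau+\tau^2/\varepsilon)\big)$, and the choice $\varepsilon=\eta$, $\theta=\eta^2$, $\tau=\eta^3$ makes all of them $O(\eta)$.

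As written, however, neither device appears in your proposal, so \eqref{eq_w_epsilon_reg_estimate} is not established.
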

 
The proof of Theorem \ref{The_approx_sub} is based on a modification of the approximation strategy in \cite{Gomes_Mitake_Tran2022}. Our argument uses the Lasry--Lions regularization \cite{Lasry_Lions1986} and the properties in \cite{jensen1995uniformly, Crandall1996Equivalence}. 

We first recall the required facts about sup/inf convolutions. For a function $w\in C(\T^n)$ and $\vep>0$, define
\[
w^\varepsilon(x):=\sup_{y\in\T ^n}\Bigl\{w(y)-\frac{|x-y|^2}{2\varepsilon}\Bigr\},
\qquad 
w_\varepsilon(x):=\inf_{y\in\T ^n}\Bigl\{w(y)+\frac{|x-y|^2}{2\varepsilon}\Bigr\}.\]
Then, $w^\varepsilon$ is $\frac{1}{\varepsilon}$--semiconvex (i.e., $w^\varepsilon + \frac{|x|^2}{2\varepsilon}$ is convex), while $w_\varepsilon$ is $\frac{1}{\varepsilon}$--semiconcave (i.e., $w_\varepsilon - \frac{|x|^2}{2\varepsilon}$ is concave). Moreover, $w_\vep\leq w\leq w^\vep$, and $\| w_\vep-w\|_{L^\infty}\to 0$ and $\| w^\vep-w\|_{L^\infty}\to 0$ as $\vep\to 0$. 

In particular, if $w$ is Lipschitz continuous, then so are $w^\vep$ and $w_\vep$, and
\begin{equation*}
	\|Dw^\varepsilon\|_{L^\infty(\T^n)}\leq 	\|Dw\|_{L^\infty(\T^n)},\qquad \|Dw_\varepsilon\|_{L^\infty(\T^n)}\leq 	\|Dw\|_{L^\infty(\T^n)}.
\end{equation*}

The Lasry-Lions regularization  is based on a two-parameter approximation. Let $(w^{\varepsilon+\delta})_{\delta}$ be the inf-sup convolution of $w$, that is, the function obtained by applying first the sup-convolution with parameter $\varepsilon+\delta$ and then the inf-convolution with parameter $\delta$. Then it satisfies the following property: 
\begin{Lem}\label{lem_eigenvalue}
Let $w\in C(\T^n)$ and let $\eta,\vep>0$. Then,
\begin{enumerate}[\rm (1)]
	 \item $(w^{\eta+\vep})_{\vep}$ is of class $C^{1,1},$ and for any point $x$ at which $(w^{\eta+\vep})_{\vep}$ is twice differentiable, 
	    \begin{equation}\label{eq:hessian_bounds}
	      -\frac{1}{\eta}\,I_n \preceq D^{2}(w^{\eta+\vep})_{\vep}(x) \preceq \frac{1}{\vep}\,I_n.
	     \end{equation}
	
	 \item $w \leq w^\eta \leq (w^{\eta+\vep})_{\vep}$ on $\T^n$, and $\Lip(\wee)\leq \Lip(w)$.
	 
	 \item If $(w^{\eta+\vep})_{\vep}$ is twice differentiable at $\hat{x}$ and the strict inequality 
	   \[w^\eta(\hat{x})<(w^{\eta+\vep})_{\vep}(\hat{x})\] 
	   holds, then $1/\vep$ is exactly an eigenvalue of $D^{2}(w^{\eta+\vep})_{\vep}(\hat{x})$.
\end{enumerate}
\end{Lem}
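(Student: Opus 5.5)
The plan is to prove the three items in the order (2), (1), (3), using only the elementary structure of sup/inf convolutions on $\T^n$ (lifted to $\Z^n$-periodic functions on $\R^n$, so that $|x-y|^2$ is the usual quadratic). Throughout, write $\wee:=(w^{\eta+\vep})_\vep$.

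\emph{Item (2).} Taking $y=x$ in the definition of the sup-convolution gives $w\le w^\eta$. For the second inequality, use the semigroup (Hopf--Lax) property of quadratic sup-convolutions:
\[
\bigl(w^{\eta+\vep}\bigr)(x)=\sup_z\Bigl\{w^\eta(z)-\frac{|x-z|^2}{2\vep}\Bigr\},
\]
which is the identity $\frac{|x-y|^2}{2(\eta+\vep)}=\min_z\bigl(\frac{|z-y|^2}{2\eta}+\frac{|x-z|^2}{2\vep}\bigr)$. Then
\[
\wee(x)=\inf_y\sup_z\Bigl\{w^\eta(z)-\frac{|y-z|^2}{2\vep}+\frac{|x-y|^2}{2\vep}\Bigr\}\ge w^\eta(x),
\]
by choosing $z=x$ inside the supremum. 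The Lipschitz bound holds because each convolution is a sup or inf of translates of $w$ (plus a fixed function of the shift), and such operations preserve the Lipschitz constant.

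\emph{Item (1).} The function $w^{\eta+\vep}$ is $\frac1{\eta+\vep}$-semiconvex. The standard Lasry--Lions lemma says that the inf-convolution with parameter $\vep<\eta+\vep$ preserves semiconvexity. The explicit route is to write $\wee+\frac{|x|^2}{2\eta}$ as a partial infimum of a jointly convex function; doing this bookkeeping carefully gives exactly the constant $\frac1\eta=\frac{1}{(\eta+\vep)-\vep}$. The inf-convolution is in any case $\frac1\vep$-semiconcave. A function that is both semiconvex and semiconcave is $C^{1,1}$, and at points of twice differentiability its Hessian satisfies the two-sided bound \eqref{eq:hessian_bounds}. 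I expect this semiconvexity computation to be the main technical obstacle, and I would reproduce the argument of \cite{Lasry_Lions1986} (or \cite{Crandall1996Equivalence}) for it.

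\emph{Item (3).} Let $y^*$ attain the infimum in
\[
\wee(\hat x)=\inf_y\Bigl\{w^{\eta+\vep}(y)+\frac{|\hat x-y|^2}{2\vep}\Bigr\}.
\]
\begin{enumerate}[\rm (a)]
\item First show $y^*\neq\hat x$. If $y^*=\hat x$, then $\wee(\hat x)=w^{\eta+\vep}(\hat x)$. By the semigroup identity, $w^{\eta+\vep}(\hat x)=\sup_z\{w^\eta(z)-\frac{|\hat x-z|^2}{2\vep}\}$, so let $z^*$ be a maximizer there. Taking $y=z^*$ in the inf-sup representation of $\wee(\hat x)$ bounds the inner supremum by $w^\eta(z^*)+\frac{|\hat x-z^*|^2}{2\vep}$. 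Combined with $\wee(\hat x)=w^\eta(z^*)-\frac{|\hat x-z^*|^2}{2\vep}$, this forces $z^*=\hat x$, hence $\wee(\hat x)=w^\eta(\hat x)$, contradicting the strict inequality.
\item Next, the inequality $\wee(x)\le w^{\eta+\vep}(y^*+x-\hat x)+\frac{|\hat x-y^*|^2}{2\vep}$, with equality at $x=\hat x$, together with the minimality of $y^*$, yields the standard touching relation: the quadratic $q(x):=w^{\eta+\vep}(y^*)+\frac{|x-y^*|^2}{2\vep}$ lies above $\wee$ and touches it at $\hat x$. (To see $q\ge\wee$, take $y=y^*$ in the definition of the inf-convolution.)
\item Consequently $D\wee(\hat x)=(\hat x-y^*)/\vep$ and $D^2\wee(\hat x)\preceq\frac1\vep I_n$, which is consistent with item (1). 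To get equality in the direction $e=(\hat x-y^*)/|\hat x-y^*|$, restrict to the segment $x=y^*+te$. One checks that $\wee(y^*+te)=w^{\eta+\vep}(y^*)+\frac{t^2}{2\vep}$ for $t$ near $t_0=|\hat x-y^*|$: the segment from $y^*$ to $\hat x$ has $y^*$ as the minimizer for every point on it, by a triangle-equality argument. Hence the second directional derivative is $e^\trans D^2\wee(\hat x)e=\frac1\vep$.
\item Combined with the upper bound $D^2\wee(\hat x)\preceq\frac1\vep I_n$, this makes $e$ an eigenvector with eigenvalue exactly $1/\vep$.
\end{enumerate}

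The segment claim in step (c) holds for $t\le t_0$. For a one-sided argument at $t_0$, twice differentiability of $\wee$ at $\hat x$ suffices, since it lets the one-sided second difference quotient along $e$ compute $e^\trans D^2\wee(\hat x)e$.
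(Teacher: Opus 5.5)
Your arguments for items (1) and (2) are sound. The paper just cites \cite{Lasry_Lions1986} for these, and your joint-convexity route does give the constant $1/\eta$.

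The gap is in item (3). Steps (a) and (c) are both false, and the vertex direction $e=(\hat x-y^*)/|\hat x-y^*|$ is the wrong object.

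In (a), setting $y=z^*$ only gives $\wee(\hat x)\le w^{\eta+\vep}(z^*)+\frac{|\hat x-z^*|^2}{2\vep}$. Since $w^{\eta+\vep}(z^*)\ge w^\eta(z^*)$, usually strictly, your claimed bound does not follow. Even if it held, it is compatible with $\wee(\hat x)=w^\eta(z^*)-\frac{|\hat x-z^*|^2}{2\vep}$ for every $z^*$, so nothing is forced. The conclusion $y^*\neq\hat x$ is in fact false. Take $n=1$, $w(z)=\mathrm{dist}(z,\Z)$ and small $\eta,\vep$. Near $0$ one has $w^\eta(z)=|z|+\frac{\eta}{2}$, $w^{\eta+\vep}(y)=|y|+\frac{\eta+\vep}{2}$, and $\wee(x)=\frac{x^2}{2\vep}+\frac{\eta+\vep}{2}$ for $|x|\le\vep$. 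At $\hat x=0$ every hypothesis of (3) holds, yet the unique minimizer is $y^*=0=\hat x$.

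In (c), the claim that $\wee=q$ along $[y^*,\hat x]$ is a triangle-equality fact for the kernel $|x-y|$, not for $|x-y|^2$. It fails even when $y^*\neq\hat x$. Take $w(z)=\mathrm{dist}(z_1,\Z)+g(z_2)$ on $\T^2$, with $g$ periodic, $g(z_2)=\alpha z_2$ near $0$, and $\alpha\neq0$. Near the origin, $\wee(x)=\frac{x_1^2}{2\vep}+\alpha x_2+\mathrm{const}$ and $\wee(0)-w^\eta(0)=\frac{\vep}{2}>0$. The unique minimizer is $y^*=(0,-\alpha\vep)$, so your $e$ is $\pm e_2$. But $e_2^\trans D^2\wee(0)e_2=0$; the actual $1/\vep$-eigenvector is $e_1$.

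The missing idea is that the eigen-direction comes from the contact set of the touching paraboloid with $w^\eta$, not from its vertex. Let $q(x)=w^{\eta+\vep}(y^*)+\frac{|x-y^*|^2}{2\vep}$, so that $w^\eta\le\wee\le q$ and $q(\hat x)=\wee(\hat x)$. Let $K=\{z:\ w^\eta(z)=q(z)\}$. This set is nonempty and compact, since it is the set of maximizers in $w^{\eta+\vep}(y^*)=\sup_z\{w^\eta(z)-\frac{|z-y^*|^2}{2\vep}\}$.

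By the semigroup identity, $\wee(x)$ is the infimum of $Q(x)$ over all paraboloids $Q(z)=c+\frac{|z-y|^2}{2\vep}$ with $Q\ge w^\eta$. For any such $Q$, $Q-q$ is affine and $Q\ge w^\eta=q$ on $K$, so $Q\ge q$ on $\mathrm{conv}(K)$. Hence $\wee=q$ on $\mathrm{conv}(K)$.

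Next, $\hat x\in\mathrm{conv}(K)$. Otherwise, add to $q$ a small multiple of an affine function that is positive near $K$ and negative at $\hat x$. This stays above $w^\eta$, using $q-w^\eta\ge\gamma>0$ away from $K$ on bounded sets and quadratic growth at infinity. It is again an admissible paraboloid, and its value at $\hat x$ is below $\wee(\hat x)$, a contradiction.

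The strict inequality $w^\eta(\hat x)<\wee(\hat x)$ gives $\hat x\notin K$. So $\hat x$ is an interior point of a nondegenerate segment contained in $\mathrm{conv}(K)$. Along that segment's direction $e$ one has $\wee=q$, hence $e^\trans D^2\wee(\hat x)e=1/\vep$. Your step (d) then finishes the proof.
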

The proof of Lemma \ref{lem_eigenvalue} is available in \cite{Lasry_Lions1986} and, for the final statement, in \cite{Crandall1996Equivalence} (see also \cite{jensen1995uniformly}).

\begin{Lem}\label{lemma_LLregul}
Let $w\in \Lip(\T^n)$ be a viscosity subsolution of \eqref{criteq_c}, and write
\begin{equation}\label{wsupinf}
\wee := (w^{\eta+\vep})_{\vep}
\end{equation}
for the inf-sup convolution of $w$. Then there exist constants $K>0$ and $C>0$, independent of $\eta$ and $\vep$, such that for each $\eta\in(0,1)$ and $\vep=K\eta^3$, 
\begin{equation}\label{ineq_wee_w}
	\|\wee-w\|_{L^\infty(\T^n)}\leq C\eta,
\end{equation}
and
\begin{equation}\label{eq_approximation:w_e^d}
-\eta^2\Delta \wee
-\tr\bigl(A(x)D^2 \wee\bigr)
+H\bigl(x,D\wee\bigr)
\le c_0+C\eta
\end{equation}
holds at every point $x\in\T^n$ where $\wee$ is twice differentiable. 

\end{Lem}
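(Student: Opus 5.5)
The plan has two parts: first the sup-convolution $w^\eta$, then the inf-convolution with parameter $\vep$ on top of it. The first part uses only standard facts. Since $w$ is Lipschitz with constant $L_w:=\Lip(w)$, the maximizer $y$ in $w^\eta(x)$ satisfies $|x-y|\le 2L_w\eta$. Hence $0\le w^\eta-w\le 2L_w^2\eta$, and similarly $0\le w^{\eta+\vep}-w\le C\eta$. The inf-convolution only lowers $w^{\eta+\vep}$, and Lemma \ref{lem_eigenvalue}(2) gives $\wee\ge w^\eta\ge w$. Together these yield \eqref{ineq_wee_w}.

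Next, $w^\eta$ is a viscosity subsolution of an equation with perturbed coefficients. If $\varphi$ touches $w^\eta$ from above at $x$, then $\varphi(\cdot+x-y)$ touches $w$ from above at $y$, where $|x-y|\le C\eta$. Combining this with the Lipschitz continuity of $H$ in $x$ (on bounded gradients) and the Lipschitz continuity of $\Sigma$, I expect
\[
H(x,Dw^\eta)\le \tr\big(A(y)D^2w^\eta\big)+c_0+C\eta
\]
in a suitable sense. Converting $A(y)$ back to $A(x)$ is the delicate point, since $D^2w^\eta$ is only bounded below by $-\frac1\eta I_n$. I will handle this with the doubling estimate used in Proposition \ref{Pro_convergerate}, through the matrix $G$ built from $\Sigma(x),\Sigma(y)$. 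Writing the sup-convolution as a max over $y$ and applying the Crandall--Ishii lemma with the quadratic penalty $|x-y|^2/(2\eta)$ gives an error of size $\frac1\eta\big(\Lip(\Sigma)|x-y|\big)^2\le C\eta$. So $w^\eta$ is a viscosity subsolution of $H(x,Dw)\le \tr(A(x)D^2w)+c_0+C\eta$, and it is $\frac1\eta$-semiconvex.

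The second part treats a point $\hat x$ where $\wee$ is twice differentiable, in two cases. In the first case, $\wee(\hat x)=w^\eta(\hat x)$. Since $\wee\ge w^\eta$ everywhere, the second-order expansion of $\wee$ at $\hat x$ is a valid test function touching $w^\eta$ from above. The subsolution property of $w^\eta$ then gives the inequality for $-\tr(AD^2\wee)+H(x,D\wee)$. The extra term $-\eta^2\Delta\wee$ is at most $\eta^2\cdot\frac{n}{\eta}=n\eta$ by \eqref{eq:hessian_bounds}. In the second case, $\wee(\hat x)>w^\eta(\hat x)$. By Lemma \ref{lem_eigenvalue}(3), $1/\vep$ is an eigenvalue of $D^2\wee(\hat x)$, and I expect to show this case cannot occur for the chosen $\vep$. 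For this I would use the standard structure of the inf-convolution: $\wee(\hat x)=w^{\eta+\vep}(z)+|\hat x-z|^2/(2\vep)$, with $w^{\eta+\vep}$ touched from below by a paraboloid of opening $-\frac1\vep$ at $z$. Pairing this with the subsolution property of $w^{\eta+\vep}$ is not straightforward, because a subsolution tested from below carries no information.

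So the main obstacle is the second case. The approach I would try first is the one from \cite{Gomes_Mitake_Tran2022}: even when $1/\vep$ is an eigenvalue, the Hessian bounds $-\frac1\eta I_n\preceq D^2\wee\preceq\frac1\vep I_n$ hold. Moreover, $\wee$ is a supremum of functions of the form $w^{\eta+\vep}(z)+|\cdot-z|^2/(2\vep)$ restricted appropriately, and the set where $\wee>w^\eta$ is open. There one can argue that $\wee$ remains a viscosity subsolution of the perturbed equation, because the inf-sup convolution of a semiconvex subsolution is again a subsolution up to $O(\eta)$ when $\vep=K\eta^3$ (Jensen-type arguments via the touching point $z$, with $|\hat x-z|\le C\vep$). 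Once that holds, the a.e. inequality at twice differentiable points follows from Alexandrov/Jensen. I expect the constant $K$ to be chosen so that the error $\frac{\vep}{\eta^2}$ terms coming from the matrix $\frac1\vep$-direction are $O(\eta)$, namely $\vep/\eta^2\sim\eta$. This explains $\vep=K\eta^3$.
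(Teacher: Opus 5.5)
Your proof of \eqref{ineq_wee_w}, of the subsolution property of $w^\eta$, and of the case $\wee(\hat x)=w^\eta(\hat x)$ is fine and essentially the paper's. (You bound $-\eta^2\Delta\wee\le n\eta$ directly from \eqref{eq:hessian_bounds} instead of folding semiconvexity into the viscosity inequality for $w^\eta$; the two are equivalent.)

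The gap is the case $\wee(\hat x)>w^\eta(\hat x)$. Your expectation that this case cannot occur is false. Take $w(x)=|x|$ near $0$, say a triangle wave of slopes $\pm1$, which is a subsolution of $|Dw|^2\le 1$ with $A\equiv 0$. Locally $w^\eta=|x|+\eta/2$ and $w^{\eta+\vep}=|x|+(\eta+\vep)/2$. Hence $\wee=\frac{x^2}{2\vep}+\frac{\eta+\vep}{2}>w^\eta$ on $\{|x|<\vep\}$. Your fallback, showing via the touching point $z$ that $\wee$ is again an approximate viscosity subsolution, is not an argument. As you note yourself, at $z$ you only have a paraboloid touching $w^{\eta+\vep}$ from below, which carries no subsolution information. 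Moreover, for degenerate $A$ the eigendirection of $1/\vep$ may lie in $\ker A(\hat x)$, so the operator $-\tr(AD^2\cdot)+H$ alone gets no help from that eigenvalue.

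The missing observation is that this case needs no information from $w$ at all: the isotropic term $-\eta^2\Delta$ settles it.
\begin{itemize}
\item Since $1/\vep$ is an eigenvalue of $D^2\wee(\hat x)$ and all eigenvalues are $\ge -1/\eta$, you get $\eta^2\Delta\wee(\hat x)\ge \eta^2/\vep-(n-1)\eta$.
\item The same lower Hessian bound gives $\tr\bigl(A(\hat x)D^2\wee(\hat x)\bigr)\ge -M_1/\eta$, with $M_1=\max_{x}\tr A(x)$.
\item From $\Lip(\wee)\le\Lip(w)$ you get $H\bigl(\hat x,D\wee(\hat x)\bigr)\le M_2$, where $M_2:=\max\{|H(x,p)|: x\in\T^n,\ |p|\le \Lip(w)\}+|c_0|$.
\end{itemize}
So the left-hand side of \eqref{eq_approximation:w_e^d} is at most $(n-1)\eta-\eta^2/\vep+M_1/\eta+M_2$. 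With $\vep=K\eta^3$ the term $-\eta^2/\vep=-1/(K\eta)$ dominates. Choosing $K^{-1}=(n-1)+M_1+M_2-c_0$, and using $M_2\ge c_0$, makes the bound $\le c_0$ for every $\eta\in(0,1)$.

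This is what actually dictates $\vep\sim\eta^3$: $\eta^2/\vep$ has to beat the $O(1/\eta)$ contribution of the degenerate diffusion. Your heuristic that an error $\vep/\eta^2$ should be $O(\eta)$ lands on the same power, but nothing in your argument supports it.
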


\begin{proof}
By Lemma \ref{lem_eigenvalue}, the function $\wee$ given in \eqref{wsupinf} is $C^{1,1}$ and is twice differentiable almost everywhere. Let $x^{*}\in\mathbb{T}^{n}$ be a point at which $\wee$ is twice differentiable. By Lemma \ref{lem_eigenvalue} item (2), we have $\wee(x^*)\geq w^\eta(x^*)$, where $w^\eta$ is the sup-convolution of $w$. Hence, we distinguish two cases: {\rm (i)} $\wee(x^*)= w^\eta(x^*)$; {\rm (ii)} $\wee(x^*)> w^\eta(x^*)$.

\smallskip

\noindent {\bf Case (i).} $\wee(x^*)= w^\eta(x^*)$. 

Recall that $w^\eta$ is $1/\eta$--semiconvex, so it satisfies 
\[-\Delta w^\eta\leq \frac{n}{\eta}\] in $\T^n$ in the viscosity sense. Moreover, it is known (see \cite{Ishii_equivalence95, Userguide92}) that the sup-convolution $w^\eta$ is a viscosity subsolution of
\[
-\tr \bigl(A(x)D^{2}w^\eta\bigr)+H(x,Dw^\eta)\leq c_0+C_1\eta\,,
\]
where the constant $C_1$ depends only on $A$, $H$ and $\Lip(w)$. Consequently, $w^\eta$ satisfies
\begin{equation}\label{eq_visc:w^e}
-\eta^2\Delta w^\eta-\tr \bigl(A(x)D^2w^\eta\bigr)+H(x,Dw^\eta)
\leq c_0+(n+C_1)\eta
\end{equation}
in the viscosity sense. Now, for any $\beta>0$, define a quadratic function $\phi_\beta\in C^\infty(\T^n)$ by
\[
\phi_\beta(x):=\wee(x^*)+D\wee(x^*)\cdot(x-x^*)+\frac12(x-x^*)^\trans\bigl(D^2\wee(x^*)+\beta I_n \bigr)(x-x^*).
\]  
Then, $\wee-\phi_\beta$ has a local maximum at $x^*$, and therefore so does $w^\eta-\phi_\beta$. Hence, the viscosity subsolution property of $w^\eta$ for \eqref{eq_visc:w^e} yields
\[
-\eta^2\Delta\phi_\beta(x^*)-\tr\bigl(A(x^*)D^2\phi_\beta(x^*)\bigr)+H\bigl(x^*,D\phi_\beta(x^*)\bigr)\le c_0+(n+C_1)\eta.
\]  
Substituting
\[
\Delta\phi_\beta(x^*)=\Delta \wee(x^*)+n\beta,\quad
D^2\phi_\beta(x^*)=D^2\wee(x^*)+\beta I_n,\quad
D\phi_\beta(x^*)=D\wee(x^*),
\]
into the above inequality and then letting $\beta\to0^+$, we obtain \eqref{eq_approximation:w_e^d} at $x=x^*$.

\smallskip

\noindent {\bf Case (ii).} $\wee(x^*)> w^\eta(x^*)$. 

By Lemma \ref{lem_eigenvalue}, $D^2\wee(x^*)\succeq -\eta^{-1}I_n$ and $\vep^{-1}$ is an eigenvalue of $D^2 \wee(x^*)$, so 
\begin{equation}\label{eq_eigenvalue:nu}
\eta^2\Delta \wee(x^*) =\eta^2 \tr \bigl(D^2\wee(x^*)\bigr)\geq \eta^2\left(\frac{1}{\vep}-\frac{n-1}{\eta}\right)=\frac{\eta^2}{\vep}-(n-1)\eta.
\end{equation}
Moreover, 
\begin{equation}\label{Neumann_trace}
\tr \bigl(A(x^*)D^2\wee(x^*)\bigr)\geq \tr \left(-A(x^*)\,\frac{1}{\eta}I_n\right)\geq -\frac{M_1}{\eta}\,,\quad \text{where}~M_1:=\max_{x\in \mathbb{T}^n}\tr \big(A(x)\big).
\end{equation}
Since $\Lip(\wee)\leq \Lip(w)$ by Lemma \ref{lem_eigenvalue} item (2), we also have
\begin{equation}\label{HM2}
	H\bigl(x^*,D\wee(x^*)\bigr)\leq M_2, \qquad
M_2:=\max\bigl\{|H(x,p)|:~ x\in\T^n, |p|\le \Lip(w)\bigr\}+|c_0|.
\end{equation}
Combining estimates \eqref{eq_eigenvalue:nu}-\eqref{HM2}, we obtain
\begin{equation}\label{Hdjf5}
		-\eta^2\Delta \wee(x^*)-\tr  \bigl(A(x^*)D^2\wee(x^*)\bigr)
      +H\bigl(x^*,D\wee(x^*)\bigr)
\leq (n-1)\eta-\frac{\eta^2}{\vep}
      +\frac{M_1}{\eta}+M_2.
\end{equation}
Now choose $\vep=K\eta^3$ with
\begin{equation*}
	K:=\frac{1}{(n-1)+M_1+M_2-c_0}>0.
\end{equation*}
Then, \eqref{Hdjf5} reduces to
\begin{equation*}
		-\eta^2\Delta\wee(x^*)-\tr  \bigl(A(x^*)D^2\wee(x^*)\bigr)
      +H\bigl(x^*,D\wee(x^*)\bigr)
\leq c_0
\end{equation*}
for all $\eta\in (0,1)$. Thus \eqref{eq_approximation:w_e^d} also holds in this case. 

Finally, since $w$ is Lipschitz continuous, \eqref{ineq_wee_w} follows from the standard approximation properties of inf- and sup-convolutions; see \cite{Lasry_Lions1986}. This completes the proof.
\end{proof}

\,

With Lemmas \ref{lem_eigenvalue} and \ref{lemma_LLregul} at hand, we can now prove Theorem \ref{The_approx_sub}.

\begin{proof}[Proof of Theorem \ref{The_approx_sub}]
Let $\wee$ be the function given by Lemma \ref{lemma_LLregul}. We further regularize $\wee$ by convolution with a standard mollifier to obtain a smooth approximation.

Let $\theta \in C_c^\infty(\mathbb{R}^n)$ be a standard mollifier, that is, $\theta\geq 0$, $\operatorname{supp} \theta \subset B_1(0)$, and $\int_{\mathbb R^n} \theta\,\diff x = 1$. For each small $\delta > 0$, set $\theta_\delta(y):= \delta^{-n} \theta\left(\frac{y}{\delta}\right)$.  We then define a $C^\infty$ function $\wreg$ by
\begin{equation}\label{wregul}
	\wreg:=\wee * \theta_\delta,\quad \text{with}~\vep=K\eta^3~\text{and}~\delta=\eta^4,
\end{equation}
where $K$ is the constant given in Lemma \ref{lemma_LLregul}. Moreover,
\[
 (\wee * \theta_\delta)(x) = \int_{\mathbb{T}^n} \wee(x-y) \theta_\delta(y) \, \diff y\qquad \text{for all}~x\in\T^n. 
\]

We first prove \eqref{eq_uniform_approx}. Indeed, using \eqref{ineq_wee_w} and the standard properties of mollifiers, 
\begin{align*}
	\|\wreg-w\|_{L^\infty}\leq &~\|\wreg-\wee\|_{L^\infty}+\|\wee-w\|_{L^\infty}\\
	\leq & ~\delta\, \Lip(\wee)+C\eta ~\leq ~\eta^4 \Lip(w)+C\eta~\leq~ C_0\eta
\end{align*}
for some constant $C_0$ independent of $\eta$. Hence \eqref{eq_uniform_approx} follows.

\smallskip

It remains to prove \eqref{eq_w_epsilon_reg_estimate}. We divide the proof into the following steps.

\noindent {\bf Step 1.} We need to estimate the quantity 
\begin{align*}
Q(x):=~&-\eta^2\Delta \wreg(x)-\tr\bigl(A(x)D^2\wreg(x)\bigr)+H\bigl(x,D\wreg(x)\bigr)\\
    =~& -\eta^2\int_{\T^n} \Delta \wee(x-y)\,\theta_\delta(y)\,\diff y	
	-\int_{\T^n}\tr\big(A(x)D^2\wee(x-y)\big)\,\theta_\delta(y)\,\diff y\\
	&\quad +H\left(x,\int_{\T^n} D\wee(x-y)\,\theta_\delta(y)\,\diff y\right).
\end{align*}
Since $H(x,p)$ is convex in $p$, Jensen's inequality gives
\begin{align*}
	Q(x)\leq ~& -\eta^2\int_{\T^n} \Delta \wee(x-y)\,\theta_\delta(y)\,\diff y	
	-\int_{\T^n}\tr\big(A(x)D^2\wee(x-y)\big)\,\theta_\delta(y)\,\diff y\\
	&\quad +\int_{\T^n}H\big(x, D\wee(x-y)\big)\,  \theta_\delta(y) \, \diff y \\
	=~& Q_1(x)+Q_2(x)
\end{align*}
where 
\begin{align*}
	Q_1(x):=~& -\int_{\T^n}  \tr\Big( \big(A(x)-A(x-y)\big)D^2\wee(x-y)\Big)\, \theta_\delta(y)\, \diff y\\
	&~+\int_{\T^n} \left[H\big(x, D\wee(x-y)\big)-H\big(x-y, D\wee(x-y)\big)\right]\theta_\delta(y)\diff y\,,
\end{align*}
and 
\[
\begin{aligned}
	Q_2(x):=~\int \Bigl[ &-\eta^2 \Delta \wee(x-y)- \tr\big(A(x-y)D^2 \wee(x-y) \big)\\
	&\quad +H\big(x-y, D\wee(x-y)\big)\Bigr]\,\theta_\delta(y)\,\diff y. 
	\end{aligned}
\]

\noindent {\bf Step 2.}  According to Lemma \ref{lem_eigenvalue}, 
\[
-\frac{1}{\eta}I_n\preceq D^2\wee(x)\preceq \frac{1}{\varepsilon}I_n
\quad \text{for a.e. }x\in\T^n,\qquad \Lip(\wee)\le \Lip(w).
\]
Combining these estimates with the fact that $\supp(\theta_\delta)\subset B_\delta(0)$, we obtain 
\begin{equation}\label{Q1est}
 \begin{aligned}
	Q_1(x)\leq ~&  \Lip(A)\,\delta\,(\eta^{-1}+\vep^{-1})+\max_{\substack{|p|\leq \Lip(w)}} |D_xH(x,p)| \,\delta~\leq ~ C_1\eta,
 \end{aligned}
\end{equation}
where in the last step we used $\vep=K\eta^3$ and $\delta=\eta^4$ from \eqref{wregul}, and
$C_1>0$ is a constant independent of $\eta$.

\noindent {\bf Step 3.}  By \eqref{eq_approximation:w_e^d} in Lemma \ref{lemma_LLregul}, we immediately obtain 
\begin{equation}\label{Q2est}
	Q_2(x)\leq \int_{\T^n} (c_0+C_2\eta)\,\theta_\delta(y)\,\diff y=c_0+C_2\eta.
\end{equation}
for some constant $C_2>0$ independent of $\eta$. Combining \eqref{Q1est} and \eqref{Q2est}, we conclude 
\[
Q(x)\le c_0+(C_1+C_2)\eta.
\]
This finally proves the desired \eqref{eq_w_epsilon_reg_estimate}.
\end{proof}

%%%%%%%%%%%%%%%%%%%%%%%%%%%%%%%%%%%%%%%%%%%%%%%%%%%%%%%%%%%%%%%%%%
\section{{Asymptotic Analysis and Proof of the Main Results}}\label{sec_proofofmainresult}
In this section, we complete the proof of the main results. We first derive two key estimates on the asymptotic behavior of the solution $u_\lambda$ of equation \eqref{eq_intro1}. These estimates are then used to establish the uniform convergence of $u_\lambda$ and the characterization of the selected limit, thereby proving Theorem \ref{mainresult1}. Theorem \ref{mainresult2} and Theorem \ref{mainresult3} are then derived as applications of Theorem  \ref{mainresult1}.

\subsection{Key Estimates for the Selection of the Limit}
We use Theorem \ref{The_approx_sub} together with the solutions $\sigma_\lambda^\eta$ of the adjoint equations \eqref{eq_adjoint} to derive two technical lemmas. The first lemma gives an integral inequality for $u_\lambda$ with respect to the generalized Mather measures constructed in Section \ref{section_generalized_Mathermeasures}.

\begin{Lem}\label{new_gen_1}
Let $u_\lambda$ be a continuous viscosity solution of \eqref{eq_intro1}, that is,
\[
    f(x,\lambda u_\lambda)+ H(x, Du_\lambda)+\lambda V(x)=\tr \bigl( A(x) D^{2}u_\lambda\bigr)+c_H \qquad \text{in}~ \T^n.
\]
Then, 
\begin{equation}\label{frac1_00}
	\int_{\T^n\times\R^n} f(x,\lambda u_\lambda(x))\, \diff \tilde\mu  \leq  -\lambda \int_{\T^n\times\R^n} V(x)\, \diff \tilde\mu,\quad \text{for all}~\tilde\mu\in\tilde\cM_{\rm g}
\end{equation}
where $\tilde\cM_{\rm g}$ is the collection of generalized Mather measures given in \eqref{Mg_measures}. In particular, 
\begin{equation}\label{frac_00}
	\limsup_{\lambda\to 0^+}\int_{\T^n\times\R^n} \partial_r f(x,0)\, u_\lambda(x) \diff \tilde\mu  \leq  -\int_{\T^n\times\R^n} V(x) \diff \tilde\mu,\quad \text{for all}~\tilde\mu\in\tilde\cM_{\rm g}.
\end{equation}

\end{Lem}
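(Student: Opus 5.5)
The plan is to test a smooth approximate subsolution built from $u_\lambda$ against the adjoint densities that generate a given $\tilde\mu\in\tilde\cM_{\rm g}$, use convexity of $H$ in $p$, and pass to the limit.

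Fix $\lambda\in(0,\lambda_0]$ and $\tilde\mu\in\tilde\cM_{\rm g}$. By definition $\tilde\mu=\cL^{-1}_\#\tilde\nu$ with $\tilde\nu=\tilde\nu(x_0,\{\eta_k\}_k,\{\lambda'_j\}_j)$, and its parameters $\lambda'_j$ are unrelated to the fixed $\lambda$.

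\textbf{Step 1: freeze the nonlinear term.} Set
\[
g(x):=f(x,\lambda u_\lambda(x))+\lambda V(x).
\]
By Theorem \ref{The_equiestimates}, $u_\lambda$ is Lipschitz, so $g\in\Lip(\T^n)$. Then $w:=u_\lambda$ is a Lipschitz viscosity subsolution of \eqref{criteq_c} with $c_0=c_H$ and the modified Hamiltonian $\tilde H(x,p):=H(x,p)+g(x)$. This $\tilde H$ is convex and superlinear in $p$, and Lipschitz in $x$ locally uniformly in $p$.

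I would apply Theorem \ref{The_approx_sub} to $\tilde H$. This requires checking that its proof uses only convexity in $p$ and Lipschitz dependence in $x$. The relevant places are the sup-convolution subsolution property in Case (i), and the term $\max|D_xH|$ in \eqref{Q1est}, which becomes $\Lip(g)$ plus the old bound. Both hold for $\tilde H$. This yields $w_\eta\in C^\infty(\T^n)$ with $\|w_\eta-u_\lambda\|_\infty\le C\eta$ and
\[
-\eta^2\Delta w_\eta-\tr(AD^2w_\eta)+H(x,Dw_\eta)+g(x)\le c_H+C\eta,
\]
where $C$ may depend on the fixed $\lambda$.

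\textbf{Step 2: test against the adjoint density.} For $\eta=\eta_k$ and $\lambda'=\lambda'_j$, write $p=Du^{\eta}_{\lambda'}$. Convexity gives $H(x,Dw_\eta)\ge H(x,p)+D_pH(x,p)\cdot(Dw_\eta-p)$. Multiply the inequality of Step 1 by $\sigma^{\eta}_{\lambda'}\ge0$ and integrate. By \eqref{distribu_sense} with $\phi=w_\eta$,
\[
\int\bigl[D_pH\cdot Dw_\eta-\eta^2\Delta w_\eta-\tr(AD^2w_\eta)\bigr]\sigma^\eta_{\lambda'}\,dx=\lambda' w_\eta(x_0)-\lambda'\int\partial_rf(x,\lambda' u^\eta_{\lambda'})\,w_\eta\,\sigma^\eta_{\lambda'}\,dx.
\]
Dividing by $\int\sigma^\eta_{\lambda'}\ge m_0$ and using \eqref{def_probmea}, we obtain
\[
\int\bigl[H(x,p)-D_pH(x,p)\cdot p+g(x)\bigr]\,d\tilde\nu^{\eta}_{\lambda'}\le c_H+C\eta+O(\lambda'),
\]
where $O(\lambda')$ is uniform in $\eta$ because $\|w_\eta\|_\infty$ is bounded and Lemma \ref{proper_sigma} applies.

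\textbf{Step 3: pass to the limit.} Let $k\to\infty$ and then $j\to\infty$. The supports of the measures are uniformly compact, so weak convergence applies to these continuous integrands. Proposition \ref{prop:minimizing}(i) gives $\int(D_pH\cdot p-H)\,d\tilde\nu=-c_H$. Hence $\int g\,d\tilde\nu\le0$. Since $g$ depends only on $x$ and $\cL$ preserves $x$, this equals $\int g\,d\tilde\mu\le0$, which is \eqref{frac1_00}.

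\textbf{Step 4: derive \eqref{frac_00}.} Divide \eqref{frac1_00} by $\lambda$. By (H3) and the equiboundedness of $u_\lambda$ (Theorem \ref{The_equiestimates}),
\[
f(x,\lambda u_\lambda)=\lambda\,\partial_rf(x,0)\,u_\lambda+\lambda\,o(1)
\]
uniformly in $x$. Taking the $\limsup$ as $\lambda\to0^+$ gives \eqref{frac_00}.

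\textbf{Main obstacle.} The delicate point is Step 1: the right-hand side of the equation for $u_\lambda$ depends on $x$ through $g$, so Theorem \ref{The_approx_sub} cannot be applied verbatim. I must check that Lemma \ref{lemma_LLregul} and the mollification step survive with $H$ replaced by $\tilde H$, which is only Lipschitz (not $C^1$) in $x$. That proof uses only a Lipschitz bound in $x$, so I expect it to go through with $C$ depending on $\Lip(g)$. This dependence is harmless, since $\lambda$ is fixed before letting $\eta\to0$.
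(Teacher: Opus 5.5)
Your proposal is correct and follows essentially the same argument as the paper. You build a smooth approximate subsolution from $u_\lambda$ via Theorem~\ref{The_approx_sub}, test it against the adjoint densities using convexity and \eqref{distribu_sense}, and pass to the limit with Proposition~\ref{prop:minimizing}(i). The one difference is how the frozen term $g$ is handled: you check that the proof of Theorem~\ref{The_approx_sub} needs only Lipschitz dependence of the Hamiltonian on $x$, which is true, whereas the paper mollifies $P=g$ to $P_\vep\in C^\infty$ with $\|P_\vep-P\|_\infty\le\vep$ and $\Lip(P_\vep)\le\Lip(P)$, applies the theorem verbatim to the $C^2$ Hamiltonian $\HH_\vep=H+P_\vep$ with $c_0=c_H+\vep$, and lets $\vep\to0$ at the end.
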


\begin{proof}
Fix $\lambda>0$. We first prove inequality \eqref{frac1_00}. Note that $u_{\lambda}$ is Lipschitz continuous by Theorem \ref{The_Bernst_Lip}. Set
\begin{equation*}
P(x):=f(x,\lambda u_{\lambda}(x))+\lambda V(x).	
\end{equation*}
For each $\vep>0$, by a standard mollification argument there exists $P_\vep\in C^\infty(\T^n)$ such that
\begin{equation}\label{Pvep}
	\|P_\vep-P\|_{L^\infty(\T^n)}\leq \vep,\qquad \Lip(P_\vep)\leq \Lip(P).
\end{equation}
We then introduce the Hamiltonian $\HH_\vep: \T^n\times\R^n\to\R$ defined by
	\begin{equation}\label{newHam}
		\HH_\vep(x,p):=H(x,p)+P_\vep(x).
	\end{equation}
Note that $\HH_\vep\in C^2(\T^n\times\R^n)$ still satisfies assumption (H1). Clearly, $u_{\lambda}$ is a Lipschitz viscosity subsolution of
\[
    \HH_\vep(x, Du_{\lambda})\leq \tr \bigl( A(x) D^{2}u_{\lambda}\bigr)+c_H+\vep \quad \text{in}~\T^n.
\]

Fix $\tilde\mu\in\tilde\cM_{\rm g}$. Let $\tilde\nu:=\cL_\#\tilde\mu\in \tilde\cM^*_{\rm g}$ be its pushforward under the Legendre transform $\mathcal L$ in \eqref{legendre}. By \eqref{Mgstar}, there exist subsequences $\{\eta_k\}_k\to 0^+$ and  $\{\lambda_l\}_l\to 0^+$ such that 
\[\tilde\nu=\tilde\nu\left(x_0, \{\eta_k\}_k, \{\lambda_l\}_l\right)\] 
with associated measures $\tilde\nu_{\lambda_l}^{\eta_k}$, $\tilde\nu_{\lambda_l}$ satisfying 
\[
\tilde\nu_{\lambda_l}^{\eta_k} \rightharpoonup \tilde\nu_{\lambda_l} \quad (k\to\infty), \qquad
\tilde\nu_{\lambda_l} \rightharpoonup \tilde\nu \quad (l\to\infty).
\]
Moreover, $\tilde\nu_{\lambda_l}^{\eta_k}$ is compactly supported and satisfies
\begin{equation}\label{testmulk}
	\int_{\T^n\times\R^n} \psi(x,p)\,\diff \tilde\nu_{\lambda_l}^{\eta_k}~=~\frac{1}{\int_{\T^n} \sigma_{\lambda_l}^{\eta_k}(x)\,\diff x} \int_{\T^n} \psi\left(x,Du_{\lambda_l}^{\eta_k}\right)\sigma_{\lambda_l}^{\eta_k}(x)\,\diff x
\end{equation}
for all $\psi\in C(\T^n\times\R^n)$, where $\sigma_{\lambda}^{\eta}$ is the solution of the adjoint equation \eqref{eq_adjoint}.

Applying the approximation Theorem \ref{The_approx_sub} with $\eta=\eta_k$, Hamiltonian $\HH_\vep$ and $c_0=c_H+\vep$, we obtain $\phi_{k}\in C^\infty(\T^n)$ such that 
\[\|\phi_{k}-u_{\lambda}\|_{L^\infty(\T^n)}\leq C\eta_k\] and
\begin{equation}\label{ineq_etak}
	-\eta_k^2\Delta \phi_k-\textup{tr}\big( A(x) D^2\phi_k\big)+\HH_\vep(x,D\phi_k)\leq c_H+\vep+C\eta_k\quad \text{for all}~x\in\T^n,
\end{equation}
for some constant $C>0$ independent of $k$. Integrating \eqref{ineq_etak} with respect to the measure $\tilde\nu_{\lambda_l}^{\eta_k}$ and using $\HH_\vep(x,D\phi_{k})$ $\geq$ $\HH_\vep(x,p)+D_p\HH_\vep(x, p)\cdot (D\phi_k-p)$ from the $p$-convexity of $\HH_\vep(x,p)$, we get  
\begin{align*}
c_H+\vep+C\eta_k \geq & \int_{\T^n\times\R^n}\HH_\vep(x,D\phi_k)-\eta_k^2\Delta \phi_k-\textup{tr}\big( A(x) D^2\phi_k\big)\,\diff \tilde\nu_{\lambda_l}^{\eta_k}\\
		\geq & \int_{\T^n\times\R^n} \HH_\vep(x,p)-D_p\HH_\vep(x, p)\cdot p \,\diff \tilde\nu_{\lambda_l}^{\eta_k}\\
		  &\qquad +\int_{\T^n\times\R^n} D_p\HH_\vep(x, p)\cdot D\phi_{k}-\eta_k^2\Delta \phi_k-\textup{tr}\big( A(x) D^2\phi_k\big)\,\diff \tilde\nu_{\lambda_l}^{\eta_k}\,.
\end{align*}
To estimate the last integral term above, using \eqref{testmulk} and then applying identity \eqref{distribu_sense} with test function $\phi=\phi_k\in C^2(\T^n)$, we obtain
\begin{align*}
	&\int_{\T^n\times\R^n} D_p\HH_\vep(x, p)\cdot D\phi_k-\eta_k^2\Delta \phi_k-\textup{tr}\big( A(x) D^2\phi_k\big)\,\diff \tilde\nu_{\lambda_l}^{\eta_k}\\
			= &~ \frac{\displaystyle\int_{\T^n}\left[D_pH(x, Du_{\lambda_l}^{\eta_k})\cdot D\phi_k -\eta_k^2\Delta \phi_k-\textup{tr}\big( A(x) D^2\phi_k\big)\right]\sigma_{\lambda_l}^{\eta_k}\,\diff x }{\displaystyle\int_{\T^n} \sigma_{\lambda_l}^{\eta_k}\,\diff x}\\
		=&~ \frac{\lambda_l \phi_k(x_0)- \lambda_l \displaystyle\int_{\T^n}\partial_rf(x,\lambda_l u_{\lambda_l}^{\eta_k}) \,\phi_k\, \sigma_{\lambda_l}^{\eta_k} \, \diff x }{\displaystyle\int_{\T^n} \sigma_{\lambda_l}^{\eta_k}\,\diff x} \,.
\end{align*}
Combining this with the preceding inequality, we get
\begin{align*}
		&c_H+\vep+C\eta_k\\
		\geq &~ \int \HH_\vep(x,p)-D_p\HH_\vep(x, p)\cdot p \diff \tilde\nu_{\lambda_l}^{\eta_k}
		 + \frac{\lambda_l \phi_k(x_0)- \lambda_l\displaystyle\int_{\T^n}\partial_rf(x,\lambda_l u_{\lambda_l}^{\eta_k}) \,\phi_k\,\sigma_{\lambda_l}^{\eta_k}  \, \diff x }{\displaystyle\int_{\T^n} \sigma_{\lambda_l}^{\eta_k}\diff x}.  
\end{align*}
By Lemma \ref{Lem_exst_ulameta} and Lemma \ref{proper_sigma}, both $\lambda_l u_{\lambda_l}^{\eta_k}$ and $\int\sigma_{\lambda_l}^{\eta_k}\diff x$ are uniformly bounded. Therefore, letting $k\to \infty$ and then $l\to \infty$, and using $\|\phi_k-u_\lambda\|_{L^\infty}\to 0$, we obtain
\begin{equation*}
c_H+\vep \geq   \int_{\T^n\times\R^n} \HH_\vep(x,p)-D_p\HH_\vep(x, p)\cdot p \,\diff \tilde\nu.
\end{equation*}
By \eqref{newHam} and item (i) of Proposition \ref{prop:minimizing}, this becomes
\begin{equation*}
c_H+\vep\geq \int_{\T^n\times\R^n}H(x,p)+P_\vep(x)-D_pH(x,p)\cdot p\,\diff\tilde\nu=c_H+ \int_{\T^n\times\R^n}P_\vep(x)\,\diff\tilde\nu.
\end{equation*}
Sending $\vep\to 0$ and using \eqref{Pvep} yields 
\begin{equation*}
	\int_{\T^n\times\R^n} P(x)\,\diff \tilde\nu=\int_{\T^n\times\R^n} P(x)\,\diff \tilde\mu\leq 0.
\end{equation*}
Thus, by the definition of $P$,  the inequality \eqref{frac1_00} is proved.

To prove inequality \eqref{frac_00}, note that $\lambda u_\lambda\to 0$ uniformly as $\lambda\to 0^+$ by Theorem \ref{The_equiestimates}. Using assumption (H3), we have
\[f(x,\lambda u_\lambda(x))=\lambda \partial_r f(x,0)u_{\lambda}(x)+o(\lambda),\quad\text{uniformly in}~ x\in\T^n.\]
 Then inequality \eqref{frac1_00} becomes 
\[\int_{\T^n\times\R^n}\lambda \partial_r f(x,0)u_{\lambda}(x) \diff\tilde\mu+o(\lambda)\leq -\lambda \int_{\T^n\times\R^n}V(x)\diff \tilde\mu.
\]
Dividing both sides by $\lambda$ and taking the $\limsup$ as $\lambda\to 0^+$, we obtain the desired inequality \eqref{frac_00}. 
\end{proof}

In contrast to Lemma \ref{new_gen_1}, the next lemma establishes a lower bound for the $C^2$ solution $u_\lambda^\eta$ of the regularized equation \eqref{eq_regularized}, rather than for $u_\lambda$ itself, since its proof relies on the adjoint method and the $C^2$ regularity. Thanks to the uniform convergence $u_\lambda^\eta \to u_\lambda$ (see Proposition \ref{Pro_convergerate}), the estimate obtained for $u_\lambda^\eta$ will later serve as a bridge to recover the asymptotic information on $u_\lambda$.

\begin{Lem}\label{new_gen_2}
Let $u_\lambda^\eta$ be the solution to the regularized equation \eqref{eq_regularized}, that is,
\begin{equation}\label{ejqiu01}
f(x,\lambda u_\lambda^\eta)+H(x,Du_\lambda^\eta)+\lambda V(x)
=
\eta^2\Delta u_\lambda^\eta+\tr\big(A(x)D^2u_\lambda^\eta\big)+c_H
\quad \text{in}~\T^n,
\end{equation}
with parameters $\lambda\in(0,\lambda_0]$ and $\eta\in(0,\eta_0]$. Then, for every continuous viscosity solution $w$ of equation \eqref{eq_intro2} and every point $x_0\in \mathbb{T}^n$, the following estimate holds:
\begin{equation}\label{eqwe38}
u_\lambda^\eta (x_0)\geq w(x_0)-\int_{\mathbb{T}^n}\big(\partial_rf(x,0)\,w+V\big)\sigma_\lambda^\eta\diff x
-C\left[\left(\frac{\lambda+\eta}{\lambda}+\|w\|_\infty\right)\breve{\omega}(\lambda+\eta)+ \frac{\eta}{\lambda}\right]
\end{equation}
where $\sigma_\lambda^\eta$ is the solution of the adjoint equation \eqref{eq_adjoint}, $\breve{\omega}$ is a modulus of continuity, and $C>0$ is a constant independent of $\lambda$ and $\eta$.
\end{Lem}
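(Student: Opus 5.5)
The plan is the usual nonlinear adjoint argument: compare $u_\lambda^\eta$ with a smooth approximate subsolution built from $w$, and test the difference against $\sigma_\lambda^\eta$.

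\textbf{Step 1: regularize $w$.} By Theorem \ref{The_Bernst_Lip} with $\delta=0$, $w$ is Lipschitz. Apply Theorem \ref{The_approx_sub} with $c_0=c_H$ to get $w_\eta:=\wreg\in C^\infty(\T^n)$ with $\|w_\eta-w\|_{L^\infty}\le C\eta$ and
\[
-\eta^2\Delta w_\eta-\tr\bigl(A(x)D^2w_\eta\bigr)+H(x,Dw_\eta)\le c_H+C\eta .
\]
Write \eqref{ejqiu01} as $-\eta^2\Delta u_\lambda^\eta-\tr(AD^2u_\lambda^\eta)+H(x,Du_\lambda^\eta)=c_H-f(x,\lambda u_\lambda^\eta)-\lambda V$. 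Set $\phi:=w_\eta-u_\lambda^\eta\in C^2$. Subtracting, and using convexity in the form $H(x,Dw_\eta)\ge H(x,Du_\lambda^\eta)+D_pH(x,Du_\lambda^\eta)\cdot D\phi$, gives the pointwise inequality
\[
D_pH(x,Du_\lambda^\eta)\cdot D\phi-\eta^2\Delta\phi-\tr(AD^2\phi)\le C\eta+f(x,\lambda u_\lambda^\eta)+\lambda V .
\]

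\textbf{Step 2: test against the adjoint.} Multiply by $\sigma_\lambda^\eta\ge0$ and integrate. Replace the left side using \eqref{distribu_sense} with this $\phi$, and bound $\int\sigma_\lambda^\eta\le m_1$ by Lemma \ref{proper_sigma}. This yields
\[
\lambda\phi(x_0)-\lambda\int\partial_rf(x,\lambda u_\lambda^\eta)\,\phi\,\sigma_\lambda^\eta\,dx\le Cm_1\eta+\int\bigl(f(x,\lambda u_\lambda^\eta)+\lambda V\bigr)\sigma_\lambda^\eta\,dx .
\]
Rearranging and dividing by $\lambda$:
\[
u_\lambda^\eta(x_0)\ge w_\eta(x_0)-\int\bigl(\partial_rf(x,\lambda u_\lambda^\eta)w_\eta+V\bigr)\sigma_\lambda^\eta
+\frac1\lambda\int\bigl(\lambda\partial_rf(x,\lambda u_\lambda^\eta)u_\lambda^\eta-f(x,\lambda u_\lambda^\eta)\bigr)\sigma_\lambda^\eta-\frac{C\eta}{\lambda}.
\]
The sign bookkeeping here (the subsolution inequality must be subtracted from the equation, so that $+\lambda V$ lands on the right and becomes $-\int V\sigma$) is the point to check carefully.

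\textbf{Step 3: error terms.} Let $R:=\lambda\|u_\lambda^\eta\|_\infty$; by \eqref{eq_bound_uel}, $R\le C'(\lambda+\eta)$. There are three errors.
\begin{enumerate}[(a)]
\item \emph{Nonlinearity remainder.} As in \eqref{eq_modulus}, $|f(x,\lambda u)-\lambda\partial_rf(x,\lambda u)u|\le R\,\omega(R)$. Dividing by $\lambda$ and using $\int\sigma\le m_1$ gives at most $C\frac{\lambda+\eta}{\lambda}\breve\omega(\lambda+\eta)$.
\item \emph{Freezing $\partial_rf$ at $r=0$.} Let $\omega_1$ be the modulus of continuity of $\partial_rf$ on the compact set $\T^n\times[-R_0,R_0]$, where $R_0$ is the uniform bound from Lemma \ref{Lem_exst_ulameta}. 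Then $|\partial_rf(x,\lambda u_\lambda^\eta)-\partial_rf(x,0)|\le\omega_1(C'(\lambda+\eta))$. Multiplying by $\|w_\eta\|_\infty\le\|w\|_\infty+C$ and $m_1$ gives $C(\|w\|_\infty+1)\breve\omega(\lambda+\eta)$. The constant part is absorbed into the $\frac{\lambda+\eta}{\lambda}$ term, since that factor is $\ge1$.
\item \emph{Replacing $w_\eta$ by $w$.} This happens both at $x_0$ and inside the integral, costing $C\eta(1+\max\partial_rf\cdot m_1)\le C\eta\le C\eta/\lambda$ for $\lambda\le\lambda_0$.
\end{enumerate}
Taking $\breve\omega:=\max\{\omega,\omega_1\}$ (enlarged by constants) and collecting (a)--(c) gives \eqref{eqwe38}.

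\textbf{Main obstacle.} The substantive input is already in hand: the smooth approximate subsolution of Theorem \ref{The_approx_sub}, with its $O(\eta)$ error for a general diffusion matrix. The remaining care is to keep every constant independent of $\lambda$ and $\eta$. This relies on the uniform bounds of Lemma \ref{Lem_exst_ulameta}, Lemma \ref{proper_sigma} and Proposition \ref{Pro_convergerate}; in particular, $\lambda u_\lambda^\eta=O(\lambda+\eta)$ is what makes the modulus argument uniform.
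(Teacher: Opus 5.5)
Your proposal is correct and follows essentially the same route as the paper. You regularize $w$ via Theorem \ref{The_approx_sub}, subtract the two equations, and use $p$-convexity. You then test against $\sigma_\lambda^\eta$ through \eqref{distribu_sense} and control the three error terms using Lemma \ref{proper_sigma} and the bound $\lambda\|u_\lambda^\eta\|_\infty\le C(\lambda+\eta)$. The only difference is cosmetic: the paper replaces $\wreg$ by $w$ before dividing by $\lambda$, using $\int_{\T^n}\partial_r f(x,\lambda u_\lambda^\eta)\,\sigma_\lambda^\eta\,dx=1$, whereas you make that replacement at the end.
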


\begin{proof}
Since $w$ is Lipschitz continuous by Theorem \ref{The_Bernst_Lip}, we may invoke the approximation Theorem \ref{The_approx_sub}. Specifically, for $\eta\in (0,\eta_0]$,  there exists a function $\wreg\in C^\infty(\mathbb{T}^n)$ such that
\begin{equation}\label{eq:uniform_approx_applying}
\|\wreg-w\|_{L^\infty(\T^n)}\leq C_1 \eta,
\end{equation}
\begin{equation}\label{eq:approx_sub_applying}
-\eta^2\Delta \wreg-\operatorname{tr}\big(A(x) D^2 \wreg\big)+H(x, D\wreg)\leq c_H+ C_1 \eta \qquad\text{in}~\mathbb{T}^n, 
\end{equation}
where $C_1>0$ is a constant independent of $\eta$.

Subtracting the regularized equation \eqref{ejqiu01} from \eqref{eq:approx_sub_applying}, we deduce that 
\begin{equation*}
\begin{aligned}
	& ~f(x,\lambda u_\lambda^\eta)+\lambda V +C_1 \eta\\
\ge & ~ H(x, D\wreg)-H(x, Du^\eta_\lambda)
-\eta^2\Delta(\wreg-u_\lambda^\eta)-\operatorname{tr}\left(A(x) D^2\big(\wreg-u_\lambda^\eta\big)\right)\\
\geq & ~ D_pH(x,Du_\lambda^\eta)\cdot D(\wreg-u_\lambda^\eta)-\eta^2\Delta(\wreg-u_\lambda^\eta)-\operatorname{tr}\left(A(x) D^2\big(\wreg-u_\lambda^\eta\big)\right),
\end{aligned}	
\end{equation*}
where the last inequality follows from the $p$-convexity of $H(x,p)$. Then, multiplying this inequality by $\sigma_\lambda^\eta\geq 0$ and integrating over $\T^n$, we find 
\begin{equation*}
\begin{aligned}
	&\int_{\T^n} \left[f(x,\lambda u_\lambda^\eta)+\lambda V +C_1 \eta\right]\sigma_\lambda^\eta\diff x\\
\geq & \int_{\T^n}   \left[D_pH(x,Du_\lambda^\eta)\cdot D(\wreg-u_\lambda^\eta)-\eta^2\Delta(\wreg-u_\lambda^\eta)-\operatorname{tr}\left(A(x) D^2\big(\wreg-u_\lambda^\eta\big)\right)\right]\sigma_\lambda^\eta\diff x.
\end{aligned}
\end{equation*}
Applying identity \eqref{distribu_sense} with test function $\phi=\wreg-u_\lambda^\eta\in C^2(\T^n)$, we obtain
\begin{equation}\label{ajkws1}
\begin{aligned}
    &~ \int_{\T^n} \left[f(x,\lambda u_\lambda^\eta)+\lambda V +C_1 \eta\right]\sigma_\lambda^\eta\diff x\\
\ge &~ \lambda\,\big(\wreg(x_0)-u_\lambda^\eta(x_0)\big)-\lambda\int_{\T^n}\partial_rf(x,\lambda u_\lambda^\eta)\,(\wreg-u_\lambda^\eta)\,\sigma_\lambda^\eta\diff x.
\end{aligned}
\end{equation}
Using estimate \eqref{eq:uniform_approx_applying} together with the following identity (see \eqref{imprea20} of Lemma \ref{proper_sigma})
\begin{equation*}
	\int_{\T^n}\partial_rf(x,\lambda u_\lambda^\eta)\,\sigma_\lambda^\eta\,\diff x=1,
\end{equation*}
the above inequality \eqref{ajkws1} becomes
\begin{equation*}
\begin{aligned}
    &~\int_{\T^n} \left[f(x,\lambda u_\lambda^\eta)+\lambda V +C_1 \eta\right]\sigma_\lambda^\eta\diff x\\
\ge &~ \lambda\,\big(w(x_0)-u_\lambda^\eta(x_0)\big)-\lambda\int_{\T^n}\partial_rf(x,\lambda u_\lambda^\eta)\,(w-u_\lambda^\eta)\,\sigma_\lambda^\eta\diff x-C_1\lambda \eta.
\end{aligned}
\end{equation*}
Dividing by $\lambda>0$ and rearranging terms, we arrive at 
\begin{equation}\label{daskjoi2}
	u_\lambda^\eta(x_0)\ge w(x_0)-\int_{\T^n}\big(\partial_rf(x,0)\,w+V\big)\,\sigma_\lambda^\eta\diff x
-\int_{\mathbb{T}^n}R(x,\lambda,\eta)\,\sigma_\lambda^\eta\diff x-C_1\eta
\end{equation}
where 
\begin{align*}
	R(x,\lambda,\eta):=\frac{\displaystyle{ f(x,\lambda u_\lambda^\eta)-\lambda\partial_rf(x,\lambda u_\lambda^\eta)u_\lambda^\eta}}{\lambda} +\Big(\partial_r f(x,\lambda u_\lambda^\eta)-\partial_r f(x,0)\Big)\,w+C_1\frac{\eta}{\lambda}.
\end{align*}

It remains to estimate $R(x,\lambda,\eta)$. Since $f\in C^1$ and $f(x,0)\equiv 0$ by assumption (H3), there exists a modulus of continuity $\breve\omega_1$ such that $|f(x,r)-\partial_r f(x,0)\,r|\leq |r|\,\breve\omega_1(|r|)$, which implies 
\begin{equation}\label{Rest1}
	\left|\frac{f(x,\lambda u_\lambda^\eta)-\lambda\partial_rf(x,\lambda u_\lambda^\eta)u_\lambda^\eta}{\lambda}\right| \leq \|u^\eta_\lambda\|_{\infty}\,\breve\omega_1\big(\lambda\|u^\eta_\lambda\|_{\infty}\big).
\end{equation}
Since $\partial_r f(x,r)$ converges uniformly to $\partial_rf(x,0)$ as $r\to 0$, there exists another modulus of continuity $\breve\omega_2$ such that
\begin{equation}\label{Rest2}
    \Big|\Big(\partial_r f(x,\lambda u_\lambda^\eta)-\partial_r f(x,0)\Big)\,w\Big|\leq \|w\|_\infty\,\breve\omega_2\big(\lambda\|u^\eta_\lambda\|_{\infty}\big).
\end{equation}
On the other hand, Proposition \ref{Pro_convergerate} together with the uniform boundedness of $\{u_\lambda\}_\lambda$ (see Theorem \ref{The_equiestimates}) implies 
\begin{equation}\label{Rest3}
   \lambda\|u^\eta_\lambda\|_{\infty}\leq C_2\,(\lambda+\eta)
\end{equation}
for some constant $C_2$ independent of $\lambda, \eta$. Hence, by \eqref{Rest1}--\eqref{Rest3} and Lemma \ref{proper_sigma}, we obtain 
\begin{equation}\label{R_estim}
\left|\int_{\T^n}R(x,\lambda,\eta)\,\sigma_\lambda^\eta\,\diff x\right| \leq C_3\left[\left(\frac{\lambda+\eta}{\lambda}+\|w\|_\infty\right)\breve\omega(\lambda+\eta)\,+\frac{\eta}{\lambda}\right],
\end{equation}
for some constant $C_3>0$ independent of $\lambda$ and $\eta$, and some modulus of continuity $\breve\omega$ satisfying $\lim_{s\to 0^+}\breve\omega(s)=0$.

Finally, substituting estimate \eqref{R_estim} back into the previous lower bound \eqref{daskjoi2} for $u_\lambda^\eta(x_0)$, we obtain the desired estimate \eqref{eqwe38}.
\end{proof}

\subsection{Selection of the Limit}
 Lemma \ref{new_gen_1} shows that any possible limit of the family $\{u_\lambda\}_{\lambda>0}$ must satisfy certain integral inequalities. This naturally leads us to consider the class $\calE$ of viscosity solutions of the ergodic problem that satisfy these constraints, and to define the candidate limit as the maximal element of this class.
 
 More precisely, let $\calE$ denote the family of all continuous viscosity solutions $w:\T^n\to\R$ of the ergodic problem \eqref{eq_intro2} such that
\begin{equation*}
\int_{\T^n\times\R^n} \partial_r f(x,0)\, w(x) \diff \tilde\mu \leq  -\int_{\T^n\times\R^n}V(x) \diff \tilde\mu\,,\quad\text{for all}~\tilde\mu\in \tilde\cM_{\rm g}.
\end{equation*}
We then define a function $u_*: \T^n\to\R$ by
\begin{equation}\label{u0defin}
	u_*(x):=~\sup_{w\in \calE} w(x).
\end{equation}

We are now ready to show that $u_*$ is precisely the selected limit of $u_\lambda$ as $\lambda\to 0^+$.
\begin{The}\label{The_uconverg}
Let  $u_\lambda$ be the continuous viscosity solution of \eqref{eq_intro1} for $\lambda\in (0,\lambda_0]$. Then 
\begin{equation*}
	\lim_{\lambda\to 0^+} u_\lambda(x)=u_*(x),\quad\text{uniformly for}~x\in\T^n,
\end{equation*}
where $u_*$ is defined in \eqref{u0defin}. Moreover, $u_*: \T^n\to \R$ is a viscosity solution of \eqref{eq_intro2}.
\end{The}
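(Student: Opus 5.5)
By Theorem \ref{The_equiestimates}, the family $\{u_\lambda\}$ is equibounded and equi-Lipschitz. By Arzel\`a--Ascoli it is relatively compact in $C(\T^n)$. It therefore suffices to show that every subsequential uniform limit equals $u_*$. Fix a sequence $\lambda_j\to0^+$ with $u_{\lambda_j}\to u$ uniformly.

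By stability of viscosity solutions, $u$ solves \eqref{eq_intro2}: the terms $f(x,\lambda_j u_{\lambda_j})$ and $\lambda_jV$ tend to $0$ uniformly, since $\lambda u_\lambda\to0$ uniformly and $f(x,0)\equiv0$. Passing to the limit in \eqref{frac_00} of Lemma \ref{new_gen_1}, using uniform convergence and the fact that $\tilde\mu$ is a probability measure with $\partial_rf(\cdot,0)$ continuous, gives $\int\partial_rf(x,0)\,u\,\diff\tilde\mu\le-\int V\diff\tilde\mu$ for all $\tilde\mu\in\tilde\cM_{\rm g}$. Hence $u\in\calE$, and so $u\le u_*$.

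The reverse inequality $u\ge w$ for each $w\in\calE$ comes from Lemma \ref{new_gen_2}. Fix $x_0\in\T^n$ and $w\in\calE$, and apply \eqref{eqwe38} with $\lambda=\lambda_j$ and the adjoint solution $\sigma_{\lambda_j}^\eta$ centered at $x_0$.

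First let $\eta\to0$ along a subsequence $\eta_k$ such that $\tilde\nu^{\eta_k}_{\lambda_j}\rightharpoonup\tilde\nu_{\lambda_j}$, and also $\int\sigma^{\eta_k}_{\lambda_j}\diff x\to s_j\in[m_0,m_1]$ after extracting further. Proposition \ref{Pro_convergerate} gives $u^{\eta_k}_{\lambda_j}(x_0)\to u_{\lambda_j}(x_0)$, the terms $\eta/\lambda$ vanish, and the remaining error is $C(1+\|w\|_\infty)\breve\omega(\lambda_j)$. The integral term can be rewritten via \eqref{def_probmea} as
\[
\int\bigl(\partial_rf(x,0)w+V\bigr)\sigma\diff x=\Bigl(\int\sigma\Bigr)\int\bigl(\partial_rf(x,0)w+V\bigr)\diff\tilde\nu^{\eta}_{\lambda}.
\]
The integrand depends only on $x$ and is continuous; here we use that $w$ is Lipschitz and that $V$ is continuous. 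So it passes to the limit, giving
\[
u_{\lambda_j}(x_0)\ge w(x_0)-s_j\int\bigl(\partial_rf(x,0)w+V\bigr)\diff\tilde\nu_{\lambda_j}-C(1+\|w\|_\infty)\breve\omega(\lambda_j).
\]

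Then let $j\to\infty$ along a further subsequence with $\tilde\nu_{\lambda_j}\rightharpoonup\tilde\nu\in\tilde\cM^*_{\rm g}$ and $s_j\to s\ge m_0>0$. Since the $x$-marginal of $\tilde\nu$ equals that of $\tilde\mu=\cL^{-1}_\#\tilde\nu\in\tilde\cM_{\rm g}$, the constraint $w\in\calE$ says exactly that the last integral is $\le0$ in the limit. We get $u(x_0)\ge w(x_0)$.

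The main subtlety is that the subsequences $\eta_k,\lambda_j$ used here must be admissible in the definition \eqref{Mgstar}, so that the limit $\tilde\nu$ genuinely lies in $\tilde\cM^*_{\rm g}$. This holds because $\tilde\cM^*_{\rm g}$ is defined as the union over all points $x_0$ and all subsequences, and our extractions are subsequences of the given $\lambda_j$. A related care point is the order of limits: the $\eta$-dependent error terms must vanish first, at fixed $\lambda_j$. This is ensured by keeping $\lambda_j$ fixed while $\eta_k\to0$, which makes the $(\lambda+\eta)/\lambda$ factor in \eqref{eqwe38} bounded.

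Taking the supremum over $w\in\calE$ gives $u\ge u_*$, hence $u=u_*$. Thus $u_*$ is itself a viscosity solution of \eqref{eq_intro2}, since it equals the limit $u$. Because every subsequential limit coincides with $u_*$, the whole family $u_\lambda$ converges uniformly to $u_*$.
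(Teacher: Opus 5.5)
Your proposal is correct and follows the paper's proof essentially step for step. Relative compactness plus Lemma \ref{new_gen_1} shows every subsequential limit $u$ lies in $\calE$, hence $u\le u_*$. Lemma \ref{new_gen_2} with the iterated limits ($\eta_k\to0$ at fixed $\lambda_j$, then $\lambda_j\to0$, producing $\tilde\nu\in\tilde\cM^*_{\rm g}$) yields $u(x_0)\ge w(x_0)$ for every $w\in\calE$. The only difference is cosmetic: you carry the mass $\int\sigma_{\lambda_j}^{\eta_k}\diff x\to s\ge m_0$ as a separate factor, whereas the paper uses the normalization $\int\partial_rf(x,\lambda u_\lambda^\eta)\,\sigma_\lambda^\eta\diff x=1$ to write that term as a ratio of $\tilde\nu_{\lambda_j}^{\eta_k}$-integrals, whose denominator tends to $\int\partial_rf(x,0)\diff\tilde\nu>0$.
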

\begin{proof}
Recall that the family $\{u_\lambda\}_{\lambda\in (0,\lambda_0]}$ is equibounded and equi-Lipschitz by Theorem \ref{The_equiestimates}. Hence, by the Ascoli--Arzel\'a theorem, it is relatively compact in $C(\T^n)$. Moreover, by the stability of viscosity solutions, every convergent subsequence of $\{u_\lambda\}$ converges to a viscosity solution of the ergodic problem \eqref{eq_intro2}. 

We claim that all convergent subsequences have the same limit.

Let $\{\lambda_j\}_{j}$ be a subsequence converging to $0$ such that $u_{\lambda_j}$ converges uniformly to some function $u$. By Lemma \ref{new_gen_1}, $u$ satisfies
 \begin{equation*}
\int_{\T^n\times\R^n} \partial_r f(x,0)\, u(x) \diff \tilde\mu \leq  -\int_{\T^n\times\R^n}V(x) \diff \tilde\mu\,,\quad\text{for all}~\tilde\mu\in \tilde\cM_{\rm g}.
\end{equation*}
Thus, $u\in \calE$, and therefore
\begin{equation}\label{jdfp}
u \leq u_*~\text{on}~\T^n.
\end{equation}

It remains to prove the reverse inequality $ u(x_0) \geq u_*(x_0)$ for each point $x_0\in \T^n$.  Choose a subsequence $\{\eta_k\}_k\to 0^+$. Let $\sigma_{\lambda_j}^{\eta_k}$ be the solution of the adjoint equation \eqref{eq_adjoint} corresponding to $\lambda=\lambda_j$ and $\eta=\eta_k$. By \eqref{def_probmea}, there exists a probability measure $\tilde\nu_{\lambda_j}^{\eta_k}\in \mathcal{P}(\T^n\times\R^n)$ such that
\begin{equation}\label{measnu_jk}
	\int_{\T^n\times\R^n} \psi(x,p)\diff \tilde\nu_{\lambda_j}^{\eta_k}~=~\frac{\displaystyle{\int_{\T^n} \psi(x,Du_{\lambda_j}^{\eta_k})\,\sigma_{\lambda_j}^{\eta_k}(x)\diff x}}{\displaystyle{\int_{\T^n} \sigma_{\lambda_j}^{\eta_k}(x)\diff x}} 
\end{equation}
for all $\psi\in C(\T^n\times\R^n)$. 

By Lemma \ref{Lem_exst_ulameta} and the estimate \eqref{imprea21} in Lemma	\ref{proper_sigma}, the family $\tilde\nu_{\lambda_j}^{\eta_k}$ is uniformly compactly supported. Hence, by weak compactness of probability measures, after passing to subsequences if necessary, we may assume that  
\begin{equation*}
\tilde\nu_{\lambda_j}^{\eta_k} \rightharpoonup \tilde\nu_{\lambda_j} \quad (\text{as}~k\to\infty), \qquad
\tilde\nu_{\lambda_j} \rightharpoonup \tilde\nu \quad (\text{as}~j\to\infty),
\end{equation*}
for some $\tilde\nu_{\lambda_j}, \tilde\nu\in\mathcal{P}(\T^n\times\R^n)$. By the definition \eqref{Mgstar}, the limiting measure $\tilde\nu\in \tilde\cM^*_{\rm g}$.

Now, fix an arbitrary $w\in \calE$. Applying Lemma \ref{new_gen_2} with $\lambda=\lambda_j$ and $\eta=\eta_k$, and using \eqref{measnu_jk}, we obtain 
\begin{equation}\label{ulejk0}
\begin{aligned}
u_{\lambda_j}^{\eta_k}(x_0)
\geq & w(x_0)-\int_{\mathbb{T}^n}\big(\partial_rf(x,0)\,w+V\big)\sigma_{\lambda_j}^{\eta_k}\diff x
-C\left[\left(1+\frac{\eta_k}{\lambda_j}+\|w\|_\infty\right)\breve{\omega}(\lambda_j+\eta_k)+\frac{\eta_k}{\lambda_j}\right]\\
=&~ w(x_0)-\frac{\displaystyle{\int \big(\partial_rf(x,0)\,w+V\big)\,\diff\tilde\nu_{\lambda_j}^{\eta_k}}}{\displaystyle{\int \partial_r f(x,\lambda_j u_{\lambda_j}^{\eta_k})\, \diff \tilde\nu_{\lambda_j}^{\eta_k}}}
-C\left[\left(1+\frac{\eta_k}{\lambda_j}+\|w\|_\infty\right)\breve{\omega}(\lambda_j+\eta_k)+\frac{\eta_k}{\lambda_j}\right].
\end{aligned}
\end{equation}
Here, the last equality used the identity $
	\int_{\T^n}\partial_rf(x,\lambda_j u_{\lambda_j}^{\eta_k})\,\sigma_{\lambda_j}^{\eta_k}\,\diff x=1$, see Lemma \ref{proper_sigma}.

According to Proposition \ref{Pro_convergerate}, we have $u_{\lambda_j}^{\eta_k}\to u_{\lambda_j}$ uniformly as $k\to \infty$. Therefore, letting $k\to \infty$ and then $j\to \infty$ in \eqref{ulejk0}, we get
\begin{equation*}
u(x_0)\geq w(x_0)-\frac{\displaystyle{\int \big(\partial_rf(x,0)\,w+V\big)\,\diff\tilde\nu}}{\displaystyle{\int \partial_r f(x,0)\,\diff \tilde\nu}}=w(x_0)-\frac{\displaystyle{\int \big(\partial_rf(x,0)\,w+V\big)\,\diff\tilde\mu}}{\displaystyle{\int \partial_r f(x,0)\,\diff \tilde\mu}},
\end{equation*}
where $\tilde\mu=\cL^{-1}_\#\tilde\nu$ is the pushforward of $\tilde\nu$ under the Legendre transform. In particular, $\tilde\mu\in \tilde\cM_{\rm g}$. Thus, we infer $u(x_0)\geq w(x_0)$ since $w\in \calE$. As $w\in\calE$ is arbitrary, it follows that 
\[u(x_0)\ge u_*(x_0).\]
Moreover, by the arbitrariness of $x_0\in\T^n$, we conclude that $u\ge u_*$ on $\T^n$. Combining this with the previous inequality \eqref{jdfp}, we obtain $u=u_*$. 

Thus every convergent subsequence of $\{u_\lambda\}$ has the same limit $u_*$, and therefore the whole family converges uniformly to $u_*$. This completes the proof.
\end{proof}

As a consequence, Theorem \ref{mainresult1} follows immediately from Theorem \ref{The_equiestimates} and Theorem \ref{The_uconverg}.

\subsection{Proof of the Comparison Result}
We prove the comparison result stated in Theorem \ref{mainresult2}. As explained in the introduction, it is an application of the selection principle established above. The argument relies on a suitable discounted approximation and the characterization of the selected limit obtained in Theorem \ref{mainresult1}.

\begin{proof}[Proof of Theorem \ref{mainresult2}]
Since $\sigma$ and $u_2$ are continuous on $\T^n$, standard mollification arguments yield two families of functions $\{\sigma^\delta\}_{\delta>0}\subset C^1(\T^n)$ and $\{V^\delta\}_{\delta>0}\subset C^1(\T^n)$ such that 
\begin{equation}\label{twoapprox}
	\| \sigma-\sigma^\delta\|_{L^\infty(\T^n)} \leq \delta,\qquad \| u_2\sigma-V^\delta\|_{L^\infty(\T^n)} \leq \delta.
\end{equation}
Since $\sigma$ is strictly positive, we may assume without loss of generality that $\sigma^\delta>0$ on $\T^n$ for every $\delta>0$.

Fix $\delta>0$. We consider the following discounted approximation of \eqref{eq_intro2}:
\begin{equation}\label{sig_delt_appeq}
	\lambda \sigma^\delta(x)u_\lambda^\delta(x)+H(x, Du_\lambda^\delta)-\lambda V^\delta(x)=\tr \bigl( A(x) D^{2}u_\lambda^\delta \bigr)+c_H\qquad \text{in}~ \T^n.
\end{equation}
This equation satisfies assumptions {\rm (H1)--(H3)}. Thus, by applying Theorem \ref{mainresult1} with $f(x,r):=\sigma^\delta(x)r$ and potential $V(x):=-V^\delta$, there is a function $u_0^\delta: \T^n \to\R$ such that 
\begin{equation}\label{limlamdelt}
	\lim_{\lambda\to 0^+}u_\lambda^\delta(x)=u^\delta_0(x)=\sup_{w\in \calE_\delta} w(x),
\end{equation}
where $\calE_\delta$ denotes the family of viscosity solutions $w$ of \eqref{eq_intro2} satisfying 
\begin{equation*}
\int_{\T^n\times\R^n} w \, \sigma^\delta \,\diff\tilde\mu \leq  \int_{\T^n\times\R^n}V^\delta \,\diff \tilde\mu \quad\text{for all}~\tilde\mu\in \tilde\cM_{\rm g}^\delta.
\end{equation*}
Here, $\tilde\cM_{\rm g}^\delta\subseteq \tilde\cM$ denotes the family of generalized
Mather measures obtained by applying the construction in Subsection
\ref{subsection_MatherMeasures} to the special case \eqref{sig_delt_appeq}.

Since $\sigma$ is positive, the quantity $m_\sigma:=\inf_{x\in\T^n}\sigma(x)$ is strictly positive. For sufficiently small $\delta>0$, we claim that 
\[u_1-C_\delta\in \calE_\delta  \quad\text{with}\quad   C_\delta:=\frac{\delta}{m_\sigma-\delta}\big(1+\|u_1\|_{L^\infty(\T^n)}\big).\] 
Indeed, since every measure $\tilde\mu\in \tilde\cM_{\rm g}^\delta$ belongs to $\tilde\cM$, by assumption we have
\begin{equation*}
\int u_1 \, \sigma\, \diff\tilde\mu \leq  \int u_2\, \sigma\, \diff \tilde\mu.
\end{equation*}
This, together with estimate \eqref{twoapprox}, yields 
\begin{align*}
	\int (u_1-C_\delta)\,\sigma^\delta \,\diff\tilde\mu 
	\leq & ~\int u_1\,\sigma^\delta \, \diff\tilde\mu-C_\delta(m_\sigma -\delta)\\
	\leq & ~\int u_1\,\sigma \, \diff\tilde\mu+\delta\|u_1\|_{L^\infty(\T^n)}  -C_\delta(m_\sigma -\delta)\\
	\leq & ~ \int u_2\,\sigma\, \diff\tilde\mu+\delta\|u_1\|_{L^\infty(\T^n)}  -C_\delta(m_\sigma -\delta)\\
	\leq & ~\int V^\delta \, \diff\tilde\mu \,.
\end{align*}
This verifies the claim $u_1-C_\delta\in \calE_\delta$. As a consequence, by \eqref{limlamdelt},  
\begin{equation}\label{upperu0delta}
u_0^\delta(x)\geq u_1(x)-C_\delta \quad \text{for all } x\in\T^n.	
\end{equation}

Next, define 
\[u^{\delta}_+(x):=u_2(x)+D_\delta \quad \text{with}\quad D_\delta:=\frac{\delta}{m_\sigma -\delta}\big(1+\|u_2\|_{L^\infty(\T^n)}\big).\] A direct verification shows that $u^{\delta}_+$ is a viscosity supersolution of equation \eqref{sig_delt_appeq} provided that $\delta$ is suitably small. By the comparison principle, $u^{\delta}_+(x)\geq u_\lambda^\delta(x)$ for all $x\in \T^n$. Letting $\lambda\to 0^+$ and using \eqref{limlamdelt}, we get  $u^{\delta}_+\geq u_0^\delta$ on $\T^n$. Hence, by \eqref{upperu0delta},
\begin{equation*}
u_2(x)+D_\delta=u^{\delta}_+\geq u_0^\delta\geq u_1(x)-C_\delta \quad \text{for all } x\in\T^n.	
\end{equation*}
Finally, since $C_\delta, D_\delta\to 0$ as $\delta\to 0^+$, we conclude that $u_1\leq u_2$ on $\T^n$. This completes the proof.
\end{proof}

\subsection{Proof of Theorem \ref{mainresult3}}

\begin{proof}[Proof of Theorem \ref{mainresult3}]
Since $\hat{u}_0\in C(\T^n)$ is a solution of the ergodic problem \eqref{eq_intro2}, the Lipschitz estimate (see for instance Theorem \ref{The_Bernst_Lip}) yields $\hat{u}_0\in \Lip(\T^n)$. We choose the potential 
\begin{equation}\label{choiceV}
	    \hat V_0(x):=-\partial_rf(x,0)\, \hat u_0(x),\quad x\in\T^n.
\end{equation}
Since $f\in C^{1,1}_{\rm loc}(\T^n\times\R)$, we have $\partial_rf(x,0)\in \Lip(\T^n)$, and thus $\hat V_0\in \Lip(\T^n)$. 

For this choice of potential $\hat V_0$, the perturbed equation 
\begin{equation}\label{eq_HhatV0}
    f(x,\lambda u_\lambda)+ H(x, Du_\lambda)+\lambda \hat{V}_0(x)=\tr\big( A(x) D^{2}u_\lambda\big)+c_H \quad \text{in}~\T^n,
\end{equation}
satisfies all assumptions {\rm (H1)--(H3)} of Theorem \ref{mainresult1}. Therefore, for all sufficiently small $\lambda>0$, equation \eqref{eq_HhatV0} admits a unique Lipschitz continuous viscosity solution $u_\lambda:\T^n\to\R$, and $u_\lambda$ converges uniformly as $\lambda\to 0^+$ to a particular function $u_0$.

It remains to prove that $u_0=\hat u_0$ on $\T^n$ and the error estimate $\|u_\lambda-\hat u_0\|\sim O(\lambda)$. 

We rewrite \eqref{eq_HhatV0} as $\mathcal{F}_\lambda[u_\lambda]=0$, where, formally, 
\begin{equation*}
	\mathcal{F}_\lambda[w]:=  f(x,\lambda w) -\tr \big(A(x) D^2w\big) + H(x, Dw) +\lambda \hat{V}_0-c_H.
\end{equation*}
Define
\begin{equation*}
	u_\lambda^+:=\hat{u}_0+ M\lambda\,,
\end{equation*}
where the constant $M> 0$ will be chosen below. We claim that, for $M$ suitably large, $u_\lambda^+$ is a viscosity supersolution of \eqref{eq_HhatV0} for all sufficiently small $\lambda>0$.

Indeed, substituting $u_\lambda^+$ into $\mathcal{F}_\lambda$, we compute that
\begin{equation*}
\begin{aligned}
	\mathcal{F}_\lambda[u_\lambda^+]=~& f(x,\lambda u_\lambda^+) -\tr \big(A(x) D^2u_\lambda^+\big) + H(x, Du_\lambda^+) +\lambda \hat{V}_0-c_H\\
	=~&f(x,\lambda \hat u_0+M\lambda^2)- \lambda \partial_rf(x,0)\, \hat u_0\\
	=~&\underbrace{f(x,\lambda \hat u_0+M\lambda^2)-f(x,\lambda \hat u_0)}_{=:I_1}+\underbrace{f(x,\lambda \hat u_0)- \lambda \partial_rf(x,0)\, \hat u_0}_{=:I_2}\\
\end{aligned}	
\end{equation*}

\noindent\textbf{Estimate of $I_1$.} Since $\partial_rf(x,r)> 0$ by assumption (H3), there exists a constant $d_0=d_0(\hat u_0)>0$ such that
$
    \partial_r f(x,r)\geq d_0
$
for all $x\in\T^n$ and all $|r|\leq \|\hat u_0\|_\infty+1$. Then, 
\begin{equation}\label{I1esti}
	I_1 \geq d_0 M\lambda^2
\end{equation}
provided that $\lambda\leq 1$ and $M\lambda\leq 1$.

\noindent\textbf{Estimate of $I_2$.} Since $f(x,0)\equiv 0$ from assumption (H3), we write
\begin{equation*}
	I_2=f(x,\lambda \hat u_0(x))- f(x,0)-\lambda \partial_rf(x,0)\, \hat u_0(x)=\int_0^{\lambda \hat u_0(x)}
    \partial_r f(x,s)-\partial_r f(x,0)\diff s.
\end{equation*}
Since $f\in C^{1,1}_{\rm loc}$, there exists a constant $K_0=K_0(\hat u_0)>0$ such that \[|\partial_r f(x,s)-\partial_r f(x,0)|\leq K_0 |s|\] for all $x\in \T^n$ and all $|s|\leq \|\hat u_0\|_\infty$. Hence, for all $\lambda\leq 1$,
\begin{equation}\label{I2esti}
	|I_2| =~\left|\int_0^{\lambda \hat u_0(x)}
    \partial_r f(x,s)-\partial_r f(x,0)\diff s\right|\leq ~\frac{K_0}{2}\lambda^2\|\hat u_0\|_\infty^2\,.
\end{equation}

We now choose
\begin{equation*}
	M:=\frac{K_0}{2d_0}\|\hat u_0\|^2_\infty\,.
\end{equation*}
Combining the above two estimates \eqref{I1esti}-\eqref{I2esti} gives \begin{equation*}
	\mathcal{F}_\lambda[u_\lambda^+]\geq \lambda^2\left( d_0 M-\frac{K_0}{2}\|\hat u_0\|_\infty^2\right)=0	.
\end{equation*}
for all $\lambda\leq 1$ and $M\lambda\leq 1$. Therefore, for all sufficiently small $\lambda>0$, $u_\lambda^+:=\hat{u}_0+ M\lambda$ is a viscosity supersolution of \eqref{eq_HhatV0}. Similarly, define $u_\lambda^-:=\hat{u}_0- M\lambda$. An entirely analogous argument shows that $u_\lambda^-$ is a viscosity subsolution of \eqref{eq_HhatV0}. Using the comparison principle for equation \eqref{eq_HhatV0} gives
\[
   \hat{u}_0- M\lambda=u_\lambda^-\leq u_\lambda\leq u_\lambda^+=\hat{u}_0+ M\lambda\quad \text{on}~ \T^n
\]
for all sufficiently small $\lambda>0$. Consequently, \[
    \|u_\lambda-\hat u_0\|_{\infty} \leq M\lambda.
\]
This finally proves Theorem \ref{mainresult3}.
\end{proof}

\noindent{\bf Acknowledgments.}
This work is supported by the National Natural Science Foundation of China (No. 12301228 and No. 12571207) and the Fundamental Research Funds for the Central Universities (No. 020314380038).

\bibliographystyle{plain}%{alpha}

\end{document}